\title[Green-Tao theorem in positive characteristic]{The Green-Tao theorem for affine curves over $\mathbb F _q$}
\author{Wataru Kai}
\address{Mathematical Institute,
Tohoku University,
6-3 Aoba, 980-8578 Sendai, Japan}
\email{kaiw@tohoku.ac.jp}
\subjclass[2010]{11P32, 11G20, 14H05, 05C55}
\keywords{Green-Tao theorem, affine curves, finite fields, prime elements, \Sz\ theorem}
\begin{document}

\begin{abstract}
    Green and Tao famously proved in a 2008 paper that there are arithmetic progressions of prime numbers of arbitrary lengths.
    Soon after, analogous statements were proved by Tao for the ring of Gaussian integers
    and by L{\^e} for the polynomial rings over finite fields.
    In 2020 this was extented to orders of arbitrary number fields by Kai-Mimura-Munemasa-Seki-Yoshino.
    We settle the case of the coordinate rings of affine curves over finite fields.
    The main contribution of this paper is subtle choice of a polynomial subring of the given ring
    which plays the role of $\mathbb Z$ in the number field case.
    This choice and the proof of its pleasant properties eventually depend on the Riemann-Roch formula.

\end{abstract}

\maketitle

\tableofcontents

\section{Introduction}

In this paper we prove the following:

\begin{theorem}\label{thm:intro}
    Let $p$ be a prime.
    Let $\Or $ be an integral domain finitely generated over $\Fp $ and whose fraction field has transcendence degree $1$ over $\Fp $.
    Then for any positive integer $k\ge 1$, the set of prime elements of $\Or $ contains
    a $k$-dimensional affine subset.
\end{theorem}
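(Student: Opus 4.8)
The plan is to deduce the theorem from the relative \Sz\ theorem (in the clean form due to Conlon--Fox--Zhao) by the Green--Tao transference method, following L{\^e}'s argument for $\mathbb{F}_q[t]$ and Kai--Mimura--Munemasa--Seki--Yoshino's for orders of number fields, with a carefully chosen polynomial subring $\mathbb{F}_q[t]\subseteq\Or$ playing the role of $\mathbb{Z}$. First I set up the geometry: let $F=\operatorname{Frac}(\Or)$, let $\mathbb{F}_q$ be its field of constants, let $\bar C/\mathbb{F}_q$ be the associated smooth projective curve of genus $g$, and let $\tilde\Or=\mathbb{F}_q[\bar C\setminus S]$ be the normalization of $\Or$, which agrees with $\Or$ off the support of the conductor $\mathfrak c$. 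Two elementary reductions simplify the ring. A localization argument gives $\Or/\pi\Or\cong\tilde\Or/\pi\tilde\Or$ for $\pi\in\Or$ coprime to $\mathfrak c$, so --- allowing the configuration we seek to satisfy a fixed congruence that also forces membership in $\Or$ --- it suffices to work in $\tilde\Or$; and since $\tilde\Or$ is a localization of $\mathbb{F}_q[\bar C\setminus\{v_0\}]$ for any $v_0\in S$, it suffices to work in $\Or':=\mathbb{F}_q[\bar C\setminus\{v_0\}]$, seeking prime elements coprime to the finitely many primes above $S\setminus\{v_0\}$. Renaming $\Or:=\Or'$, I may thus assume $\Or=\mathbb{F}_q[\bar C\setminus\{v_0\}]$ for a single closed point $v_0$; then $\Or^\times=\mathbb{F}_q^\times$ is finite, the class group $\mathrm{Pic}(\Or)=\mathrm{Pic}(\bar C)/\langle[v_0]\rangle$ is finite of some order $h$, and the prime elements of $\Or$ are, up to $\mathbb{F}_q^\times$, the generators of the maximal ideals $\mathfrak m_v$ with $[\mathfrak m_v]=0$ in $\mathrm{Pic}(\Or)$ --- there are infinitely many such $v$, equidistributed in degree by Chebotarev (concretely, via the $L$-functions of the characters of $\mathrm{Pic}(\Or)$, which satisfy Weil's Riemann Hypothesis).

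The heart of the matter --- and where I expect the real work to lie --- is the choice of $t$. By Riemann--Roch on $\bar C$ I pick $e$ large and $t\in L(ev_0)\setminus L((e-1)v_0)$ that is also a separating element of $F/\mathbb{F}_q$; both conditions hold for a generic $t$ in the (large) space $L(ev_0)$, the first because $\dim L(ev_0)>\dim L((e-1)v_0)$ and the second because the non-separating elements lie in the proper subfield $F^p$. (In the non-normal case one first multiplies $t$ by an element of $\mathfrak c$ with a pole only at $v_0$, again given by Riemann--Roch, to arrange $t\in\Or$.) Then $t$ has a pole only at $v_0$, so $\mathbb{F}_q[t]\subseteq\Or$, the integral closure of $\mathbb{F}_q[t]$ in $F$ is exactly $\Or$, and $\Or$ is a free $\mathbb{F}_q[t]$-module of rank $n=[F:\mathbb{F}_q(t)]=e\deg v_0$. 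The pleasant properties this buys are: the spaces $\Or_{<N}:=L(eNv_0)$ are honest finite $\mathbb{F}_q$-subspaces of $\Or$ with $\#\Or_{<N}=q^{nN+1-g}$ for $N$ large; multiplication by a $P\in\mathbb{F}_q[t]$ with $\deg P<k$ sends $\Or_{<N-k}$ into $\Or_{<N}$; one has $\Or/g\Or\cong(\mathbb{F}_q[t]/g)^n$ for every $g\in\mathbb{F}_q[t]$; and, combining the zeta function of $\bar C$, the equidistribution above, and Weil's Riemann Hypothesis for the $L$-functions of the ray-class-type abelian covers of $\bar C$, the number of prime elements of $\Or$ in $\Or_{<N}$, and in any fixed residue class modulo $g$, is the expected main term (a convergent singular product) times $1+O(q^{-cN})$ --- a genuine power saving, with no Siegel-type obstruction. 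Establishing all of this for one and the same $t$ is, to my mind, the substantive point.

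Granting these properties, the rest is routine Green--Tao bookkeeping. Fix $k$, take $N$ large, and work in the finite abelian group $G=\Or_{<N}$, which literally contains the configuration $\{f+Pg:P\in\mathbb{F}_q[t],\ \deg P<k\}$ for all $f\in G$, $g\in\Or_{<N-k}$. Perform the $W$-trick: let $W$ be the product of the prime elements of $\Or$ of small degree together with $\mathfrak c$ and the finitely many primes from the reductions, fix a residue $b$ modulo $W$ that is suitably coprime and lies in the original ring, and restrict to $f\equiv b\pmod W$. From a truncated divisor sum $\Lambda_R$ for $\Or$ build a Goldston--Y{\i}ld{\i}r{\i}m majorant $\nu\colon G\to\mathbb{R}_{\ge0}$; it dominates a fixed multiple of the indicator of the prime elements on the coset $b+W\Or$ and has mean $\asymp_W1$. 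The estimates of the previous paragraph are precisely what one needs to evaluate the linear-forms sums expressing pseudorandomness of $\nu$, with the correct singular series as main term and a power-saving error, so $\nu$ is pseudorandom for the bounded-complexity system of $q^k$ affine-linear forms $(f,g)\mapsto f+Pg$. Since the prime elements have positive relative density inside $\nu$, the relative \Sz\ theorem produces $f,g\in\Or$ with $g\ne0$ such that $f+Pg$ is a prime element of $\Or$ whenever $\deg P<k$; as $\Or$ is a domain the map $P\mapsto f+Pg$ is injective on $\{P:\deg P<k\}$, so its image is a $k$-dimensional affine subset of $\Or$ --- an $\mathbb{F}_q$-affine subspace, from which one extracts a $k$-dimensional $\Fp$-affine subspace by restricting $P$ to the $\Fp$-span of $1,t,\dots,t^{k-1}$ --- contained in the prime elements of $\Or$. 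Since $k$ was arbitrary, the theorem follows.
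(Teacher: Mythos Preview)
Your proposal is correct and follows the same Green--Tao transference strategy as the paper: choose a polynomial subring via Riemann--Roch, build a Goldston--Y{\i}ld{\i}r{\i}m majorant, apply the $W$-trick, and invoke the Conlon--Fox--Zhao relative Szemer\'edi theorem. The main substantive difference is your preliminary reduction to a single place $v_0$ at infinity. The paper notes this reduction is available (and sketches it in a remark) but deliberately avoids it: by working directly with arbitrarily many places at infinity, the paper proves the stronger density statement (Theorem~\ref{thm:density}) that \emph{every} positive-density subset of $\mcal P_{\Ded}$ contains $k$-dimensional affine sets, and then deduces Theorem~\ref{thm:intro} by showing $\mcal P_{\Or}$ has positive density in $\mcal P_{\Ded}$ via the conductor and Chebotarev (\S\ref{sec:non-normal-case}). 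Your reduction buys genuine simplifications---the unit group collapses to $\Fq^\times$, so the paper's ``geometry of numbers'' apparatus (norm-length compatible fundamental domains and the associate-counting of Propositions~\ref{prop:norm-length-compatible-domain} and~\ref{prop:number-of-associates}) becomes unnecessary, the canonical norm is multiplicative on all of $\Or$, and the choice of $t$ needs only one pole---at the cost of the full density version for the original ring. Two small points: the separability requirement you impose on $t$ is harmless but plays no role in the paper's argument; and you fold the conductor and the finitely many excluded primes directly into $W\in\Or$, whereas the paper keeps $W$ in the polynomial subring $\ded$ and treats the conductor afterwards---both are fine, and your choice is unproblematic precisely because in the one-place case every nonzero element is multiplicative for the norm.
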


Recall that an {\em affine subset} of a vector space is by definition a translate of a vector subspace (necessarily unique).
Its {\em dimension} is defined to be that of the corresponding vector subspace.

We actually prove a density version of the theorem which we now formulate.
Let $\Ded $ be the integral closure of $\Or $ in its fraction field.
It is a Dedekind domain finite over $\Or $.
There is a canonical linear norm, defined in \S \ref{sec:algebraic},
\begin{equation}
    \lnorm - \colon \Ded \to \bbR _{\ge 0}
,\end{equation}
which gives an increasing exhaustive filtration by finite subsets
\[ \Ded _{\le N} :=\br{\alpha \in \Ded \mid \lnorm \alpha \le N } \quad (N\ge 0). \]

For an inclusion $A\subset X\neq \varnothing $ of subsets of $\Ded $,
one can consider the {\em upper relative density}: 
\begin{equation}
    \bar\delta _X(A):= \limsup _{N\to +\infty } \frac{\mgn{A\cap \Ded _{\le N}} }{\mgn{ X\cap \Ded _{\le N}} }
.\end{equation}
The density statement is formulated as follows.
\begin{theorem}[{Green-Tao theorem in positive characteristic; see Theorem \ref{thm:homothetic}}]\label{thm:density} 
    Let $\Ded $ be a Dedekind domain finitely generated over $\Fp $ and $\mcal P_{\Ded }$ be the set of its prime elements.
    Then every subset $A\subset \mcal P_{\Ded }$ with $\bar\delta _{\mcal P_{\Ded }}(A)>0$
    contains a $k$-dimensional affine subset for an arbitrary $k\ge 0$.
\end{theorem}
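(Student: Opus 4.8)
\emph{Strategy and algebraic reduction.}
The plan is to run the Green--Tao transference argument, reducing the assertion to a \emph{relative multidimensional Szemer\'edi theorem} for $\Fp$-affine subspaces. Three ingredients are needed: a good polynomial subring $R\subset\Ded$ turning the filtered pieces $\Ded_{\le N}$ into finite $\Fp$-vector spaces; a pseudorandom measure majorizing a $W$-tricked normalization of the primes; and the relative Szemer\'edi theorem of Conlon--Fox--Zhao (or the hypergraph-removal route of Green--Tao used by L\^e over $\mathbb F_q[t]$), applied to the system of $p^{k}$ affine-linear forms in $k+1$ variables that cuts out a $k$-dimensional affine subspace. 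Concretely, I would first use \S\ref{sec:algebraic} to fix $t\in\Ded$, with $R:=\Fp[t]$, such that $\Ded$ is finite free over $R$ \emph{and}, for a suitable $R$-basis $b_1,\dots,b_m$ of $\Ded$, the linear norm filtration splits as $\Ded_{\le N}=\bigoplus_{j=1}^{m}b_j\cdot R_{\le N-d_j}$ up to a bounded defect, where $R_{\le e}$ denotes the polynomials of degree $\le e$ measured by $\|t\|$. The existence of such a $t$ is the crux, and is what the abstract advertises: it should be extracted from the Riemann--Roch formula on the smooth projective model of $\Ded$, which forces $\dim_{\Fp}\Ded_{\le N}$ to be eventually linear in $N$ and lets one arrange the spaces of sections to decompose freely over $\Fp[t]$. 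Granting this, $\Ded_{\le N}$ is identified with a finite $\Fp$-vector space $V_N$ of dimension $\to\infty$; a $k$-dimensional affine subset of $\Ded$ lying in $\Ded_{\le N}$ becomes an honest $k$-dimensional affine subspace of $V_N$, and linear forms in $\Ded$ become affine-linear maps of the $V_N$'s.

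\emph{The $W$-trick and the pseudorandom majorant.}
Next I would perform the $W$-trick: with $W$ the product of the prime ideals of $R$ --- together with the finitely many primes of $\Ded$ ramified over $R$ --- of norm below a slowly growing threshold, restrict $A$ to a residue class modulo $W\Ded$ where the relative density stays positive and no small prime forbids the configuration. On that class one builds, by a Goldston--Yildirim truncated divisor sum adapted to $\Ded$ --- schematically
\[
  \nu(\alpha)\ \propto\ \Bigl(\sum_{\mathfrak d\mid(\alpha),\ \mathrm N\mathfrak d\le D}\mu(\mathfrak d)\,\log\frac{D}{\mathrm N\mathfrak d}\Bigr)^{2}
\]
with $\mu$ the M\"obius function of $\Ded$ --- a measure on $V_N$ majorizing the normalized indicator of $A$. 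Verifying that $\nu$ is pseudorandom (the linear-forms condition, plus the correlation condition if the hypergraph version is used) reduces, via the Chinese Remainder Theorem in $\Ded$ and the structure of the finite rings $\Ded/\mathfrak d$, to counting points of congruences and to the behaviour of $\zeta_{\Ded}(s)$ near $s=1$; the latter is harmless since $\zeta_{\Ded}$ is essentially a rational function of $q^{-s}$ by Weil, so the local densities multiply into a convergent Euler product exactly as in the $\mathbb F_q[t]$ and number-field treatments. The box-compatibility of the norm filtration from the previous step is precisely what makes the truncation errors ``at infinity'' negligible in these counts.

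\emph{Conclusion and the main difficulty.}
Because $\bar\delta_{\mcal P_{\Ded}}(A)>0$ and the principal prime ideals are a positive proportion of all prime ideals (a Chebotarev/class-field fact on the model), $A$ has positive upper density among all primes, so after the $W$-trick normalization the transported indicator of $A$ gives a function $f$ with $0\le f\le\nu$ whose average over $V_N$ is bounded below by a positive constant for infinitely many $N$. The relative Szemer\'edi theorem then yields, for $N$ large, a $k$-dimensional affine subspace of $V_N$ in the support of $f$; unwinding $V_N\simeq\Ded_{\le N}$ produces the desired affine subset inside $A$, and the case of $\Or$ in Theorem \ref{thm:intro} follows by relating its prime elements to those of $\Ded$. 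I expect the algebraic step to be the only genuinely non-routine one --- choosing $t$ so that $\Ded$ is finite free over $\Fp[t]$ with the norm filtration splitting into honest boxes --- after which the sieve estimates parallel L\^e's and Kai--Mimura--Munemasa--Seki--Yoshino's, and the combinatorial input is a black box.
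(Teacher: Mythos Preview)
Your outline is correct and follows the same overall architecture as the paper: Riemann--Roch to produce a polynomial subring over which $\Ded$ is finite free with norm-compatible filtration, the $W$-trick, a Goldston--Y{\i}ld{\i}r{\i}m majorant whose linear-forms condition is verified via the Euler product for $\zeta_\Ded$, and the Conlon--Fox--Zhao relative hypergraph removal lemma.

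Two differences are worth flagging. First, the paper takes the base ring to be $\Fq[t]$ with $\Fq$ the integral closure of $\Fp$ in $\Ded$, not $\Fp[t]$; this is cosmetic but streamlines the Riemann--Roch bookkeeping. Second, and more structurally, the paper does \emph{not} run the transference argument directly on the system of $p^k$ forms $\beta+\sum_i c_i v_i$ in $k+1$ vector variables. Instead it fixes an arbitrary finite ``shape'' $S\subset\Ded$ (in particular, a $k$-dimensional $\Fp$-subspace) and searches for $\ded$-\emph{homothetic} copies $aS+\beta$ with $a\in\ded$, $\beta\in\Ded$; the hypergraph is then $|S|$-partite with vertex classes indexed by the points of $S$, exactly as in the number-field case. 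This yields a strictly stronger theorem (Theorem~\ref{thm:homothetic}: homothetic copies of any finite $S$), from which the affine-subspace statement drops out by taking $S$ linear. Your direct route should also work, but the hypergraph you would need is still $p^k$-partite (one class per point of the configuration), not $(k+1)$-partite, so the ``$k+1$ variables'' framing has to be unpacked in the same way the paper unpacks its two variables $(a,\beta)$ via the surjection $\phi_S\colon\ded^{|S|-1}\twoheadrightarrow\ideala$.
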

This implies Theorem \ref{thm:intro} because the prime elements of $\Or $ have positive upper density in
$\mcal P_{\Ded }$ as we recall in \S \ref{sec:non-normal-case}.

In fact, in our proof of Theorem \ref{thm:density},
we search for $k$-dimensional affine subsets of a very specific form.
\begin{definition}
    For a subring $\ded \subset \Ded $ and a finite subset $S\subset \Ded $,
    by an {\em $\ded $-homothetic copy of $S$} let us mean a subset of $\Ded $ of the form
    \begin{equation}
        a\cdot S +\beta
        =\br{ a\alpha +\beta \mid \alpha \in S }
    \end{equation}
    with $a\in \ded $ and $\beta \in \Ded $.
    Let us say it is {\em non-trivial} if we can take $a\neq 0$.

    For later use, note that these notions make perfect sense for any integral domain $\ded $, a torsion-free $\ded $-module $\ideala $ and any subset $S\subset \ideala $.
\end{definition}
For a suitable $\ded \subset \Ded $ and an arbitrary $S$, we shall prove in Theorem \ref{thm:homothetic} that any subset of $\mcal P_{\Ded }$ with positive upper relative density contains a non-trivial $\ded $-homothetic copy of $S$.
Theorem \ref{thm:density} then follows because if we take $S$ to be a $k$-dimensional linear subspace of $\Ded $ then every non-trivial $\ded $-homothetic copy of it is a $k$-dimensional affine subset.
This is why we propose to call Theorem \ref{thm:density} the {\em Green-Tao theorem in positive characteristic}, as the Green-Tao theorem for number fields is commonly formulated as follows.

\begin{theorem}[{Green-Tao theorem for number fields: \cite{Green-Tao08}, \cite{Tao}, \cite{KMMSY}}]
    \label{thm:Green-Tao}
    Let $K$ be a number field and $\mcal O_K$ the ring of its integers.
    Denote by $\mcal P _K$ the set of prime elements of $\mcal O_K$.
    Then any set $A\subset \mcal P_K$
    with positive upper relative density
    contains a non-trivial $\bbZ $-homothetic copy of $S$ for an arbitrary finite subset $S$ of $\mcal O_K$.
\end{theorem}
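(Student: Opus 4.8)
The plan, which is that of \cite{KMMSY}, is to transplant the Green--Tao transference strategy to $\mcal O_K$: first majorize the set of prime elements of $\mcal O_K$ by a pseudorandom measure, and then invoke a relative \Sz\ theorem to locate the prescribed configuration inside any subset of positive relative density.

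Concretely, I would begin by fixing a $\bbZ$-basis $\omega _1,\dots ,\omega _n$ of $\mcal O_K$, with $n=[K:\mathbb{Q}]$, so that $\mcal O_K$ is identified with $\bbZ ^n$ as an abelian group. Writing $S=\br{s_1,\dots ,s_m}$ with the $s_i$ distinct --- the case $\#S\le 1$ being trivial --- a non-trivial $\bbZ$-homothetic copy $a\cdot S+\beta $ is exactly the value of the affine-linear system $\Psi (a,\beta )=(as_1+\beta ,\dots ,as_m+\beta )$ in the variables $(a,\beta )\in \bbZ \times \bbZ ^n$, and its constituent forms are pairwise affinely independent because $as_i+\beta =as_j+\beta $ with $s_i\neq s_j$ forces $a=0$. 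Under this identification the prime elements of $\mcal O_K$ form a set $P\subset \bbZ ^n$; using Dirichlet's unit theorem to pick a fundamental domain for the action of $\mcal O_K^\times $ and the prime ideal theorem for the principal ideal class, one checks that $P$ is infinite with the expected counting asymptotics inside a box of a given size and, more importantly, is suitably equidistributed in residue classes coprime to a finite set of ``bad'' primes --- those that ramify, divide some $s_i-s_j$, or divide the conductor of the order $\bbZ [\omega _1,\dots ,\omega _n]$ inside $\mcal O_K$.

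Next I would run the $W$-trick: with $W=\prod _{q\le w}q^{e_q}$ absorbing the bad primes for a slowly growing $w$, restrict attention to a residue class $b+W\mcal O_K$ of ``$W$-admissible'' elements and, passing to a finite model $(\bbZ /N'\bbZ )^n$, build a Goldston--Yildirim truncated divisor sum $\nu $ out of the von Mangoldt function of $\mcal O_K$, normalized so that some constant multiple of $\nu $ dominates the normalized indicator of $A$. The heart of the matter is to verify the \emph{linear forms condition} and the \emph{correlation condition} for $\nu $ relative to the system $\Psi $; by the Goldston--Yildirim computation carried out over $\mcal O_K$ in place of $\bbZ $, these reduce to asymptotics for the local densities attached to $\Psi $, whose infinite product converges because the Dedekind zeta function $\zeta _K(s)$ has a simple pole at $s=1$. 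Feeding $\nu $ into the relative \Sz\ theorem of Conlon, Fox and Zhao --- equivalently the multidimensional form of Green and Tao's ``\Sz\ theorem for pseudorandom measures'', as used by Tao \cite{Tao} for constellations --- then produces, inside any positive-density subset of the support of $\nu $, a genuine copy of $\Psi $ with $a\neq 0$; translating back to $\mcal O_K$ gives the non-trivial $\bbZ$-homothetic copy of $S$ inside $A$.

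I expect the main obstacle to be precisely this analytic step: porting the Goldston--Yildirim machinery to $\mcal O_K$, controlling its main terms through $\zeta _K$ and the Chebotarev density theorem, and reconciling the sublevel sets of the linear norm $\lnorm -$ with coordinate boxes in $\bbZ ^n$ so that ``relative density'' is measured against the correct counting function --- it is here that the unit group and the class group of $K$ enter and must be handled by the geometry of numbers. Once these inputs are secured, the combinatorial core is a black box, and the pseudorandomness of $\nu $ is exactly what makes the transference argument go through.
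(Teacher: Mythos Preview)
The paper does not prove this theorem: Theorem~\ref{thm:Green-Tao} is stated with citations to \cite{Green-Tao08}, \cite{Tao}, \cite{KMMSY} as a known result, serving only as motivation and a template for the paper's actual contribution in positive characteristic. Your sketch is a faithful summary of the strategy carried out in \cite{KMMSY}---$W$-trick, Goldston--Y{\i}ld{\i}r{\i}m majorant controlled via $\zeta_K$, relative \Sz\ theorem of Conlon--Fox--Zhao, geometry of numbers to handle units and the class group---and indeed the present paper follows exactly this blueprint with $\Fq[t]$ in the role of $\bbZ$; so there is nothing to compare beyond noting that you have correctly identified the architecture that the cited references implement.
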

In \cite{KMMSY} they also prove a variant of this statement for the ``prime elements'' (in an appropriate sense) in a given non-zero ideal $\ideala \subset \mathcal O_K$.
While we will also state and prove Theorem \ref{thm:homothetic} in this generality,
the reader is advised to assume $\ideala =\Ded $ in the first reading.
We will stick to the case $\ideala =\Ded $ in the rest of Introduction.

\subsection{Overview of the proof}

Theorem \ref{thm:density} for polynomial rings $\Ded  = \Fq [t]$ is due to L{\^e} \cite{Le}.
For Theorem \ref{thm:Green-Tao}, the case of $\bbZ $ is the renowned theorem of Green and Tao \cite{Green-Tao08}
and the case of $\bbZ [\kyo ]$ is due to Tao \cite{Tao}.
For the ring $\mcal O_K$ of integers in a general number field $K$, it is a result of
Mimura, Munemasa, Seki, Yoshino and the present author \cite{KMMSY}.
See Table \ref{table:Green-Tao-theorems}.
All of them follow the strategy of Green-Tao \cite{Green-Tao08}.

\begin{table}
\begin{tabular}{c@{\hspace{20pt}}|c@{\hspace{20pt}}l}
    &char. $0$& char. $p$
    \\[10pt] \hline
    the simplest case &$\bbZ$ (Green-Tao)    & $\Fq [t]$ (L{\^e})
    \\[10pt] 
    \te{more general cases} &
    \begin{tabular}{cl}$\bbZ [\kyo ]$ &  (Tao)  \\[1mm]%
    $\mcal O_K$&  \cite{KMMSY} \end{tabular}
    &$\Ded /\Fq$ (this article)
    \\ \hline
\end{tabular}

\caption{Green-Tao theorems for different rings}
\label{table:Green-Tao-theorems}
        \end{table}

        Details of our arguments are closest to those of \cite{KMMSY}.
The new issue we have to face is that while the number ring $\mcal O_K$ has a canonical base ring $\bbZ $
which is simple enough and
such that $\mcal O_K \cong \bbZ ^n$ compatibly with the metrics on both sides,
there is no canonical one for $\Ded $ in positive characteristic.
\vskip 0.5\baselineskip

In \S \ref{sec:algebraic} we use the {\em Riemann-Roch formula} to find an appropriate subring $\ded $ of $\Ded $,
which is isomorphic to the polynomial ring
and over which $\Ded $ is finite.
The subtlety of our choice is that, moreover, the $\ded $-linear isomorphism $\Ded \cong \ded ^\therank $ (given once we choose a basis)
is compatible with the metrics on both sides; see Proposition \ref{prop:equivalence-of-norms}.
Once we have done this, 
everything in \cite{KMMSY} goes through.
So we refer the reader to \cite[\S\S 1--2]{KMMSY} for a detailed overview. 

Let us just recall the three main ingredients:
\begin{itemize}
    \item the relative \Sz\ theorem (recalled in \S \ref{sec:Sz}); 
    \item the construction of a pseudorandom measure $\lambda \colon \Ded \to \bbR _{\ge 0}$ (\S \ref{sec:GY}, completed in \S \ref{sec:end-of-proof});
    \item 
    the prime elements $\mcal P_{\Ded }$ have positive density with respect to the measure $\lambda $ (\S \ref{sec:end-of-proof}).
\end{itemize}
Here, the relative \Sz\ theorem (Theorem \ref{thm:Sz}) roughly asserts the following:
suppose we are given a function $\lambda \colon \Ded \to \bbR _{\ge 0}$
which is a {\it pseudorandom measure}---this condition
says $\lambda $ is close enough to the constant function $1$ in a certain measure (see Definition \ref{def:S-N-rho-o-pseudorandom}).
Suppose also that a subset $A\subset \Ded $ has {\em positive (upper) density with respect to $\lambda $} in that
\begin{equation}\label{eq:what-it-means-to-be-dense}
    \limsup _{N\to +\infty }
    \frac{\sum\limits _{\alpha \in A\cap \Ded _{\le N}} \lambda (\alpha ) }{\mgn{\Ded _{\le N}}}
    >0
.\end{equation}
Then $A$ contains a non-trivial $\ded $-homothetic copy of $S$ 
for every finite subset $S\subset \Ded $.
\vskip 0.5\baselineskip
The construction of $\lambda \colon \Ded \to \bbR _{\ge 0}$ is ideal-theoretic in nature. The proof that it is pseudorandom ultimately relies on the knowledge that the zeta function $\zeta _{\Ded }$ has a simple pole at $1$ with a positive residue.

That the prime elements have positive density with respect to $\lambda $ will be deduced from the \Cheb\ density theorem, an analog of Prime Number Theorem in our setting.
\vskip 0.5\baselineskip

A pitfall in the construction of $\lambda $ is that there is a somewhat natural fuction $\lambda '\colon \Ded \to \bbR _{\ge 0}$ but
we are not going to use it directly
because we do not know if it is pseudorandom in our sense.
Instead, we choose an element $W\in \ded $ with sufficiently many different prime factors
and $b\in \Ded $ coprime to $W$,
and define the function $\lambda $ as the composite $\Ded \xrightarrow{W(-)+b} \Ded \xrightarrow{\lambda '}\bbR _{\ge 0}$ times a normalizing factor.
This enables us to prove the pseudorandomness.
Thus, the set $A\subset \Ded $ in the relative \Sz\ theorem is going to be taken as the inverse image of $\mcal P_{\Ded }$ by the affine linear map $W(-)+b$;
this is of course equivalent to considering the prime elements which are congruent to $b$ modulo $W$.
The reader will see how this trick (so-called {\em $W$-trick}) works as the proof unrolls in \S\S \ref{sec:GY}--\ref{sec:end-of-proof}.
\vskip 0.5\baselineskip

While the translation of the arguments in the number field case \cite{KMMSY} into our situation is straightforward in many places,
one cannot formally apply the results in \loccit\
because of the slight difference of languages
between number fields and algebraic curves.
So we include full proofs for the convenience of the reader.
Some details get simpler---as the reader might naturally expect---partly thanks to the fact that the canonical norm $\lnorm - \colon \Ded \to \bbR _{\ge 0}$ is ultrametric,
meaning that $\lnorm{\alpha +\beta }\le \max\br{\lnorm \alpha ,\lnorm \beta }$ for all $\alpha , \beta \in \Ded $,
so that the subsets $\Ded _{\le N}$ are in fact {\it subgroups}.

\subsection*{Notation}
For a function on a non-empty finite set $f\colon X\to \bbC $, we use the standard expectation notation:
\begin{equation}
    \bbE (f\emid X )=\bbE (f(x)\emid x\in X):=
    \frac 1{\mgn X} \sum _{x\in X} f(x).
\end{equation}

From \S \ref{sec:GY} onwards, we will make extensive use of the big-$O$ notation with dependence parameters
as in \cite[Notation in \S 2]{KMMSY}:
let $f$ and $g$ be $\mathbb C$-valued functions on a set $X$ which depend on additional parameters $a,b, c,\dots $.
Assume that the values of $g$ are positive real numbers.
We write
\begin{equation}
f(x)=O_{b,c,\dots } (g(x) ) \quad (x\in X)
\end{equation}
to mean that there is a positive constant $C=C_{b,c,\dots }>0$ depending only on the parameters in the subscript
such that the inequality $|f(x)| < C g(x)$ holds for all $x\in X$.
(Thus in this case the implied constant $C$ can be taken independent of $a$.)
An expression like $O_{b,c,\dots }(1)$ would mean a positive constant depending only on the parameters $b,c,\dots $ in the subscript.
Note in particular that $O(1)$ without subscript would mean an absolute constant depending on nothing at all.
When $g(x)$ is a heavy formula, the notation $f(x)=O_{b,c,\dots }(1)\cdot g(x)$ is sometimes preferred.
A quantity written in the form $1+O_{b,c,\dots }(g(x))$ is one whose difference with $1$ is $O_{b,c,\dots }(g(x))$.
When we say something like ``$f(x,y)=O_{b,c,\dots }(g(x,y) )$ for all sufficiently large $x,y\in \mathbb R$''
we are taking the domain $X$ to be a subset of $\mathbb R ^2$ consisting of the pairs of real numbers larger than certain thresholds.
In practice, the thresholds often depend on the parameters $a,b,c, \dots $. 
We usually indicate how the thresholds depend on the parameters, especially when that piece of information is relevant.


\section{Choice of a polynomial subring}
\label{sec:algebraic}

The purpose of this section is to define the canonical norm $\lnorm - \colon \Ded \to \bbR _{\ge 0}$,
set up necessary algebraic terminology
and find an appropriate subring $\ded \cong \Fq [t]$ of $\Ded $.

We use \cite{Rosen} as the main reference about algebraic background.
We assume the reader is familiar with
advanced undergraduate commutative algebra as in \cite{AM}
and basic notions of algebraic curves (equivalently function fields in one variable)
e.g.\ as in \cite[Chapter 5]{Rosen} \cite[Chapter I, Section 6]{Har}
but they do not have to know more than the Riemann-Roch theorem \cite[Theorem 5.4, p.49]{Rosen} \cite[Theorem 1.3 in Chapter IV, p.295]{Har}.

\subsection{The canonical linear norm}
Let $\Ded $ be a {\em Dedekind domain} finitely generated over $\Fp $.
The purpose of this subsection is to describe the canonical submultiplicative linear norm
on the ring $\Ded $.

By a {\em linear norm} or an {\em ultrametric norm} on an abelian group $G$ let us mean a non-negatively valued function $\lnorm - \colon G\to \bbR _{\ge 0}$
which is {\em ultrametric}:
\begin{equation}
    \lnorm {a+b} \le \max \br{ \lnorm a ,\, \lnorm b} \quad \te{ for all }a,b\in G
\end{equation}
and {\em non-degenerate} in that the only element with norm $0$ is the zero element.
A {\em submultiplicative} linear norm on an (always commutative) ring $A$ is a linear norm on the abelian group $A$ which moreover satisfies
\begin{equation}
    \lnorm {\alpha \beta } \le \lnorm \alpha \cdot \lnorm \beta .
\end{equation}
In our examples the multiplicative unit will always have norm $1$.
A norm $\lnorm -$ is said to be {\em multiplicative} if the above inequality is always an equality.

Let $\Fq $ be the integral closure of $\Fp $ in $\Ded $.
Let $\kurve $ be the complete non-singular curve over $\Fq $ which contains
$X:= \Spec \Ded$ as an open subscheme.
Let $n$ be the cardinality of the complement:
\begin{equation}
    n:= \mgn {\complm  }  .
\end{equation}
Regard each $v\in \kurve $ as a discrete valuation $\Frac (\Ded )^*\surj \bbZ $.
Let $\F (v)$ be its residue field and
$\deg (v):= [\F (v):\Fq ]$
its degree.
Define a linear norm $\Vert - \Vert _v $ on $\Ded $ by:
\begin{equation}
    \lnorm - _v \colon  \Ded \too \bbR ;\quad \alpha \mapsto \left( \frac{1}{ \mgn{\F (v)}}\right) ^{v(\alpha )}
    = q^{-v(\alpha )\cdot \deg (v)}
.\end{equation}
The value $\lnorm 0 _v$ is understood to be $0$.
The {\em canonical norm} $\lnorm{-} $ on $\Ded $ is defined by
\begin{equation}\label{eq:def-of-canonical-norm}
    \lnorm \alpha := \max _{v\in \complm } \{ \lnorm{\alpha } _v \} .
\end{equation}
This is a submultiplicative linear norm because the following formulas hold
for all $v\in \kurve $ and $\alpha ,\beta \in \Frac(\Ded )^*$:
\begin{equation}
    v(\alpha +\beta ) \ge \min \br{v(\alpha ),v(\beta )},
    \quad
    v(\alpha \beta ) = v(\alpha )+ v(\beta )
.\end{equation}

Define the {\em norm} of a non-zero ideal $\ideala \subset \Ded $ as the cardinality of the quotient
    $\Nrm (\ideala ):= \mgn{\Ded /\ideala } $,
and for $\alpha \in \Ded \nonzero $ write $\Nrm (\alpha ):= \Nrm (\alpha \Ded )$ for short.
By convention we define $\Nrm (0):=0$.
We call it the {\em ideal norm} of $\alpha $ to avoid confusion with the linear norm $\lnorm -$.
For $\alpha \neq 0$ we know
    $\Nrm (\alpha )= \prod _{v\in X}  \mgn{\bbF (v)}^{v(\alpha )} $
say by prime decomposition of ideals in $\Ded $.
It follows by the product formula for complete algebraic curves (e.g.\ \cite[Proposition 5.1, p.47]{Rosen}) that the following equality holds for all $\alpha \in \Ded $:
\begin{equation}\label{eq:product-formula}
    \Nrm (\alpha ) = \prod _{v\in \complm } \lnrom \alpha _v .
\end{equation}
In particular $\alpha $ is in $\Ded ^*$ if and only if $\prod _{v\in \complm } \lnrom \alpha _v =1$.

For positive real numbers $N>0$, set:
\begin{align}
    \Ded _{\le N} := &\br{\alpha \in \Ded \mid \lnorm \alpha \le N}
    \\
    =& \br{ \alpha \in \Ded \mid v(\alpha )+ \frac{\log _q N}{\deg (v)} \ge 0 \te{ for all }v\in \complm  }
    .
\end{align}
By the Riemann-Roch theorem for curves \cite[Corollary 4 of Theorem 5.4, p.49]{Rosen},
we know that $\mgn{\Ded _{\le N}}$ is approximately proportional to $N^n$.
To be more precise, consider the following invariants:
\begin{equation}
    \begin{array}{cl}
    g&:=\te{ the genus of $\kurve $},\\
    d_0&:= \te{ the least common multiple of $\deg (v)$'s for $v\in \complm $}
.\end{array}
\end{equation}
Let us denote by $\lfloor - \rfloor $ the floor function
$x\mapsto $ (the largest integer not exceeding $x$)
and
consider the next divisor on $\kurve $ for $N\ge 1$:
\begin{equation}
    D_N:= \sum _{v\in\complm} \left\lfloor \frac{\log _q N}{\deg (v)}\right\rfloor v .
\end{equation}
Its degree is $\sum _{v\in\complm } \left\lfloor \frac{\log _q N}{\deg (v)}\right\rfloor \deg (v)$
which is $\le \log _q N \cdot \mgn\complm = n\log _q N$.
We have by definition $\Ded _{\le N}=\Gamma (\kurve ,\mcal O_{\kurve }(D_N) )$,
or $\Ded _{\le N}=L(D_N)$ in the notation of \cite{Rosen}.
Therefore by Riemann-Roch \cite[Corollary 4 of Theorem 5.4, p.49]{Rosen}
we get $\mgn{\Ded _{\le N} }= N^n / q^{g-1}$
for every $N\ge q^{(2g-1)/n}$ which is a power of $q^{d_0}$.
From this we also get the following 
bound valid for all real numbers $N\ge q^{(2g-1)/n}$:
\begin{equation}\label{eq:size-of-Ded-N}
    \paren{\frac{N}{q^{d_0}}}^n/q^{g-1}
    < \mgn{\Ded _{\le N}}
    \le N^{n} /q^{g-1}
.\end{equation}

When we consider a non-zero ideal $\ideala \subset \Ded $
(which is relevant only if the reader is interested in the case $\ideala \neq \Ded $ of Theorem \ref{thm:homothetic}),
we endow $\ideala $ the induced linear norm and write
\begin{align}
    \ideala _{\le N} := &\br{\alpha \in \ideala \mid \lnorm \alpha \le N}
    \\
    =& \br{ \alpha \in \ideala \mid v(\alpha )+ \frac{\log _q N}{\deg (v)} \ge 0 \te{ for all }v\in \complm  }
    .
\end{align}
By the Riemann-Roch formula again (or from \eqref{eq:size-of-Ded-N}) we get
\begin{equation}\label{eq:size-of-ideala-N}
    \frac{N^n}{\Nrm (\ideala ) q^{nd_0+g-1}}
    < \mgn{\ideala _{\le N}}
    \le
    \frac{N^n}{\Nrm (\ideala ) q^{g-1}}
    \quad \te{ if }N^n \ge \Nrm (\ideala ) q^{2g-1}
.\end{equation}

\begin{remark}
    The use of the canonical norm among other norms is not essential.
    We could have chosen an arbitrary positive intger $d_v \ge 1$ for each $v\in\complm $ and defined
    $\lnorm \alpha _v := q^{-d_v\cdot v(\alpha )}$.
    The content of this paper would remain valid with minor modifications.
    However, it did not seem appealing to the author to allow the freedom of this choice
    at the cost of heavier notation.
\end{remark}

\begin{remark}
    The $A=\mcal P_{\Ded }$ case of Theorem \ref{thm:density} can be reduced to the case where $n=\mgn{\complm }=1$ (with a general $A\subset \mcal P_{\Ded }$),
    for which the treatment in the rest of \S \ref{sec:algebraic} can be much simpler
    because then we have $\lnorm - _{\Ded } = \lnorm - _v = \Nrm (-)$ (where $v\in \complm $ is the unique element).
    Since we eventually prove Theorem \ref{thm:density} in full strength, we only give a sketch of this reduction argument.
    Take any point $v\in \complm $ and set $X':= \kurve \setminus \br v$ and $\Ded ':= \Gamma (X' ,\mcal O_{\kurve })$.
    We have a canonical injection $i\colon \Ded '\inj \Ded $ and know that
    all but finitely many associate classes (corresponding to a subset of $X'\setminus X $) of prime elements of $\Ded '$ remain prime elements in $\Ded $.
    Since $\Ded '^*=\Fq ^*$, those exceptional prime elements are finite in number, so in particular have density zero in $\mcal P_{\Ded '}$.
    Let $\mcal P_{\Ded '}^-$ be the set of remaining prime elements.
    Now we apply Theorem \ref{thm:density} to $\Ded '$ and $\mcal P_{\Ded '}^-\subset \mcal P_{\Ded '}$
    to find a $k$-dimensional affine subset contained in $\mcal P_{\Ded '}^-$.
    Since the canonical injection $i$ (which is of course $\Fq $-linear) carries $\mcal P_{\Ded '}^-$ into $\mcal P_{\Ded }$,
    we have found a $k$-dimensional affine subset in $\mcal P_{\Ded }$.
    This proves Theorem \ref{thm:density} for $\Ded $ in the special case $A=\mcal P_{\Ded }$.
\end{remark}

\subsection{The choice of a subring}
The constellation theorem \cite{KMMSY} for the ring of integers $\mcal O _K$ of a number field $K$
ensures that the set of prime elements of $\mcal O_K$ contains a $\bbZ $-homothetic copy of any given finite subset $S\subset \mcal O _K$.
In this subsection we choose a subring $\ded \cong \Fq [t] $ of $\Ded $ which plays the role of
$\bbZ $ in $\mcal O_K$.

Recall the definition
$\spDed =\Spec \Ded $ and that $\kurv $ is the complete non-singular curve over $\Fq $
containing $\spDed $ as an open subscheme.
Also $n= \mgn\complm $.

\begin{proposition}\label{prop:choice-of-subring}
    There exists an element $t\in \Ded $ such that the following two conditions are satisfied:
    \begin{enumerate}
        \item\label{item:finite}
        $\Ded $ is finite over $\Fq [t]$, say of rank $\therank $;
        \item\label{item:independent}
        the value $\lnorm t _v $ is independent of $v\in\complm $, say $q^d$.
    \end{enumerate}
    Furthermore we have $\therank = nd $ and
    the canonical linear norms of $\Ded $ and $\ded :=\Fq [t]$ satisfy
    the following compatibility:
    if we write $\lnorm - _{\Ded }$ for the canonical norm of $\Ded $ and $\lnorm -_{\ded }$ for that of $\ded $, then
    we have $\lnorm - _{\ded }^d=\lnorm - _{v}$ as functions on $\ded $ for all $v\in \complm $ and in particular
    $\lnorm - _{\ded }^d = \lnorm - _{\Ded }$.
\end{proposition}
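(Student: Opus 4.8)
\textbf{Plan.}

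I would first reformulate conditions \ref{item:finite} and \ref{item:independent} in terms of the divisor of $t$ on $\kurve$. Since an element $t\in\Ded$ has no pole on $X$, condition \ref{item:independent}---that $\lnorm t_v=q^{-v(t)\deg(v)}$ be a fixed value $q^d$ independent of $v\in\complm$---is equivalent to $v(t)=-d/\deg(v)$ for every $v\in\complm$, i.e.\ to saying that the pole divisor of $t$ on $\kurve$ is exactly
\[
    E_d:=\sum_{v\in\complm}\frac{d}{\deg(v)}\,v ,
\]
which in particular forces $d$ to be a positive multiple of $d_0$. So the task reduces to finding, for \emph{some} multiple $d$ of $d_0$, an element $t\in L(E_d)$ whose pole divisor equals $E_d$ exactly, i.e.\ with $v(t)=-d/\deg(v)$ (and not merely $>$) for each $v\in\complm$.

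To produce such a $t$ I would use Riemann--Roch. Consider the $\Fq$-linear map
\[
    \Phi\colon L(E_d)\too \bigoplus_{v\in\complm} L(E_d)/L(E_d-v),
\]
and note that $\Phi(t)$ has all components nonzero precisely when the pole divisor of $t$ is $E_d$, and that $\ker\Phi=L\big(E_d-\sum_{v\in\complm}v\big)$. I claim $\Phi$ is surjective whenever $d$ is a multiple of $d_0$ with $nd>2g-2+\sum_{v\in\complm}\deg(v)$: for such $d$ the divisors $E_d$, $E_d-v$ ($v\in\complm$) and $E_d-\sum_{v\in\complm}v$ all have degree $>2g-2$, so Riemann--Roch gives $\dim_{\Fq}L(E_d)-\dim_{\Fq}L(E_d-v)=\deg(v)$ and $\dim_{\Fq}L(E_d)-\dim_{\Fq}\ker\Phi=\sum_{v\in\complm}\deg(v)$; hence $\mathrm{image}(\Phi)$ and the target have the same $\Fq$-dimension $\sum_{v\in\complm}\deg(v)$, so $\Phi$ is onto. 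Since each summand $L(E_d)/L(E_d-v)$ is nonzero, the target has an element with all components nonzero, and any $t\in L(E_d)$ mapping to it is as desired. This $t$ lies in $\Ded$ (being regular on $X$), is non-constant (it has poles), hence is transcendental over $\Fq$, so $\ded:=\Fq[t]$ is a polynomial ring; and $\lnorm t_v=q^{-v(t)\deg(v)}=q^d$ for all $v\in\complm$, which is \ref{item:independent}.

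For \ref{item:finite} and the rank formula, extend $t$ to the (automatically finite) morphism $\bar t\colon\kurve\to\mathbb P^1_{\Fq}$. Since the pole divisor of $t$ is supported on all of $\complm$, one has $\bar t^{-1}(\infty)=\complm$ as sets, hence $\bar t^{-1}(\mathbb A^1_{\Fq})=X$; as finiteness is stable under base change along $\mathbb A^1_{\Fq}\hookrightarrow\mathbb P^1_{\Fq}$, the induced map $X\to\mathbb A^1_{\Fq}$ is finite, i.e.\ $\Ded$ is a finite $\ded$-module, of some rank $\therank$. Being torsion-free over the PID $\ded$ it is free of rank $\therank=[\Frac(\Ded):\Fq(t)]$, which by the standard formula equals the degree of the pole divisor of $t$ (the degree of $\bar t$, read off the $\Fq$-rational fibre over $\infty$), namely $\deg E_d=\sum_{v\in\complm}\frac{d}{\deg(v)}\deg(v)=nd$. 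Hence $\therank=nd$.

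Finally, for the compatibility of norms: the curve of $\ded=\Fq[t]$ is $\mathbb P^1$, of genus $0$, with complement the single $\Fq$-rational point $\infty$, so $\lnorm f_{\ded}=q^{\deg f}$ for $f\neq0$ (and $\lnorm 0_{\ded}=0$), a polynomial of degree $m$ having a pole of order $m$ at $\infty$. Given $f\in\Fq[t]$ of degree $m$ and $v\in\complm$, the leading term $a_mt^m$ with $a_m\in\Fq^*\subset\mcal O_{\kurve,v}^*$ strictly dominates the lower terms in the ultrametric because $v(t)=-d/\deg(v)<0$, so $v(f)=m\,v(t)=-md/\deg(v)$ and $\lnorm f_v=q^{-v(f)\deg(v)}=q^{md}=\lnorm f_{\ded}^{\,d}$ (the cases $f=0$, $f\in\Fq^*$ being trivial); taking the maximum over $v\in\complm$ gives $\lnorm f_{\Ded}=\lnorm f_{\ded}^{\,d}$. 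The real obstacle is the existence step: over a \emph{finite} field one cannot argue that the vector space $L(E_d)$ is not covered by the finitely many proper subspaces $L(E_d-v)$, so the proof must instead exploit the surjectivity of $\Phi$ via the Riemann--Roch dimension count above, together with the freedom to enlarge $d$ within the multiples of $d_0$; everything else is routine bookkeeping with divisors, valuations and the definitions of the norms.
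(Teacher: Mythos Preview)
Your proof is correct and follows essentially the same overall arc as the paper's: reduce to finding $t$ with pole divisor exactly $E_d=\sum_{v\in\complm}\frac{d}{\deg(v)}v$, use Riemann--Roch to produce such a $t$, then read off finiteness, the rank $\therank=nd$, and the norm compatibility from the morphism $\kurve\to\bbP^1$ determined by $t$.

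The one place where you differ is the existence step. The paper constructs, for each $v\in\complm$ separately, a function $\phi_v\in L\bigl(\tfrac{d}{\deg v}\,v\bigr)\setminus L\bigl((\tfrac{d}{\deg v}-1)\,v\bigr)$ (pole \emph{only} at $v$, of exact order $d/\deg v$), and then sets $t:=\sum_v\phi_v$; since the $\phi_{v'}$ with $v'\neq v$ are regular at $v$, the sum still has a pole of exact order $d/\deg v$ at each $v$. You instead argue globally: the map $\Phi\colon L(E_d)\to\bigoplus_v L(E_d)/L(E_d-v)$ is surjective by a Riemann--Roch dimension count, so some $t\in L(E_d)$ hits an element with all components nonzero. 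Both arguments circumvent the finite-field obstacle you correctly flag (one cannot simply avoid a finite union of proper subspaces). The paper's version is slightly more constructive; yours is a cleaner single dimension count. In fact the paper's construction implicitly proves your surjectivity claim, since the image of $\phi_v$ under $\Phi$ is nonzero in the $v$-summand and zero elsewhere. The remaining steps---$\bar t^{-1}(\infty)=\complm$, finiteness by base change, $\therank=\deg E_d=nd$, and the computation $v(f)=-md/\deg(v)$ for $f$ of degree $m$---match the paper's proof.
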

\begin{proof}
%
    Let $d$ be a large enough common multiple of $\deg (v)$ ($v\in\complm $).
    We claim that there exists a function $\phi \in \Ded $ which has a pole at each $v\in\complm $ of order exactly $d/\deg (v)$.
    For this, for each $v$ consider the set $\Gamma (\kurve ,\mcal O_{\kurve }(\frac{d}{\deg v} v ))$ of rational functions on $\kurve $
    whose only possible pole is $v$ with order $\le d/\deg (v)$.
    By the Riemann-Roch formula \cite[Corollary 4 to Theorem 5.4, p.49]{Rosen}, if $d$ is large enough the inclusion
    \begin{equation}
        \Gamma \paren{\kurve ,\mcal O_{\kurve } (\paren{\frac{d}{\deg v}-1} v ) } \quad \subset \quad \Gamma \paren{\kurve ,\mcal O_{\kurve } (\frac{d}{\deg v} v ) }
    \end{equation}
    is a proper one so there is a function $\phi _v$ whose only pole is at $v$ and of order exactly $d/\deg (v)$.
    Choose one such $\phi _v$ for each $v$ with a common $d$.
    Then the function $\phi := \sum _{v\in\complm } \phi _v$ has the claimed property.

    Denote also by $\phi $ the corresponding finite map of curves $\phi\colon \kurve \to \bbP ^1$.
    By the choice of $\phi $ we have an equality of divisors on $\kurve $:
    \begin{equation}\label{eq:pullback-pi}
        \phi ^* (\infty ) = \sum _{v\in\complm } \frac{d}{\deg (v)} v =: D.
    \end{equation}
    One can also see that the degree $\therank $ of the map $\phi $ equals $\deg (D) = nd$.
    Let $t$ be the coordinate of $\bbA ^1 \subset \bbP ^1$.
    We have $\phi\inv (\bbA ^1) = \kurve \smallsetminus |D|=\spDed $
    so $t$ can be seen as an element of $\Ded $ and assertion \eqref{item:finite} holds.

    Next, let $f(t)\in \Fq [t]$ be a polynomial of degree $e$.
    Since the valuation $v_{\infty }\colon \Fq (t)^* \to \bbZ $ at $\infty \in \bbP ^1$ agrees with the degree function when restricted to $\Fq [t]$, we have $\lnorm f _{\ded }:=\lnorm f _{v_\infty } =q^e$.
    By \eqref{eq:pullback-pi} we know $v(f) = - \frac{de}{\deg (v)}$ for each $v\in \complm $. 
    It follows that
    \begin{equation}\label{eq:norm-of-f}
        \lnorm f _v = \paren{ {q^{\deg (v)} } } ^{\frac{de}{\deg (v)}} = q ^{de} = \lnorm f _{\ded }^d
    \end{equation}
    for all $v$.
    Therefore the assertion \eqref{item:independent} and the compatibility assertion holds.
\end{proof}

We fix an $\ded \subset \Ded $ as in Proposition \ref{prop:choice-of-subring} throughout the paper.
To avoid overloaded notation, we will avoid the use of the canonical norm of $\ded $ as much as possible and reserve the symbol $\lnorm -$ for the canonical norm of $\Ded $.
As a consequence we use the following potentially confusing piece of notation:
\begin{equation}\label{eq:confusing}
    \begin{array}{rl}
        \ded _{\le N} &:= \br{ f\in \ded \mid \lnorm f \le N }
        \\
        &=\br{ f\in \ded \mid \lnorm f _{\ded }\le N^{1/d} }
    .\end{array}
\end{equation}
Note that therefore the cardinality $\mgn{\ded _{\le N}}$ is equal, up to a bounded constant, to
$N^{1/d }$.
Despite this potential confusion, this notation is convenient in the bulk of our discussion.


\subsection{Equivalence of linear norms}

Let $\ded \subset \Ded $ be as in Proposition \ref{prop:choice-of-subring}.
We know $\Ded $ is a free $\ded $-module of rank $\therank $.
Let $\alpha _1,\dots ,\alpha _r \in \Ded $ be a basis.
One can consider the {\em max norm} on $\Ded $ with respect to this basis:
\begin{equation}
    \lnorm {\sum _{i=1}^r f_i \alpha _i } _{\bsb \alpha }
    := \max _i \{ \lnorm {f_i} _{\Ded }  \} .
\end{equation}
It is an ultrametric norm on the abelian group $\Ded $. 

Let us recall that two norms $\lnorm - _1$ and $\lnorm - _2$
on an abelian group $G$ are said to be {\em equivalent}
if there are positive real numbers $c,C>0$ such that
the next inequality holds on $G$:
\begin{equation}
    c \lnrom - _2 \quad\le\quad \lnorm - _1 \quad\le\quad C \lnorm - _2 .
\end{equation}
It is easy to see that the equivalence class of the norm $\lnorm - _{\bsb \alpha }$ is independent of the choice of the basis
$\bsb \alpha
= (\alpha _1,\dots ,\alpha _r)$.

\begin{proposition}\label{prop:equivalence-of-norms}
    For any given $\ded $-basis $\bsb \alpha $ of $\Ded $,
    the associated norm $\lnorm - _{\bsb \alpha }$ is equivalent to the canonical norm $\lnorm -  _{\Ded }$.
\end{proposition}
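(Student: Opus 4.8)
The plan is to establish the two defining inequalities of norm equivalence separately. The inequality $\lnorm\gamma_\Ded\le C\lnorm\gamma_{\bsb\alpha}$ is immediate from the axioms: for $\gamma=\sum_{i=1}^\therank f_i\alpha_i$ with $f_i\in\ded$, iterating the ultrametric inequality and then using submultiplicativity gives
\[
    \lnorm\gamma_\Ded\ \le\ \max_i\lnorm{f_i\alpha_i}_\Ded\ \le\ \max_i\bigl(\lnorm{f_i}_\Ded\lnorm{\alpha_i}_\Ded\bigr)\ \le\ C\cdot\lnorm\gamma_{\bsb\alpha},\qquad C:=\max_i\lnorm{\alpha_i}_\Ded\ (\ge 1).
\]
Writing $B_N:=\{\gamma\in\Ded\mid\lnorm\gamma_{\bsb\alpha}\le N\}$ — which, like $\Ded_{\le N}$, is a finite $\Fq$-subspace of $\Ded$, since both norms are ultrametric and every element of $\Fq^*$ has norm $1$ — this says $B_{N/C}\subseteq\Ded_{\le N}$ for all $N>0$. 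It remains to produce $C'$ with $\Ded_{\le N}\subseteq B_{C'N}$, i.e.\ $\lnorm\gamma_{\bsb\alpha}\le C'\lnorm\gamma_\Ded$; this is the substantial half.

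\emph{A uniform dimension gap.} By \eqref{eq:size-of-Ded-N} one has $\mgn{\Ded_{\le N}}\le N^n/q^{g-1}$ as soon as $N\ge q^{(2g-1)/n}$, while by \eqref{eq:confusing} the set $\ded_{\le M}$ equals $\{f\in\ded\mid\deg f\le(\log_qM)/d\}$, so $\mgn{\ded_{\le M}}>M^{1/d}$ and hence $\mgn{B_M}=\mgn{\ded_{\le M}}^\therank>M^{\therank/d}=M^n$, using $\therank=nd$. Combining these with $B_{N/C}\subseteq\Ded_{\le N}$, for every $N\ge C q^{(2g-1)/n}$ we get
\[
    \dim_{\Fq}\!\bigl(\Ded_{\le N}/B_{N/C}\bigr)\ =\ \log_q\frac{\mgn{\Ded_{\le N}}}{\mgn{B_{N/C}}}\ <\ \log_q\frac{N^n/q^{g-1}}{(N/C)^n}\ =\ n\log_q C-(g-1),
\]
a quantity independent of $N$, hence at most some integer $\ell\ge 1$ depending only on $\Ded$ and $\bsb\alpha$.

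\emph{A scaling trick.} Because $\lnorm t_\Ded=q^d$ by Proposition \ref{prop:choice-of-subring} and the norm is submultiplicative, every $\gamma\in\Ded_{\le N}$ satisfies $t^j\gamma\in\Ded_{\le q^{dj}N}\subseteq\Ded_{\le q^{d\ell}N}$ for $0\le j\le\ell$. Apply the dimension gap with $N':=q^{d\ell}N$ in place of $N$ (legitimate once $N$ exceeds a fixed threshold): the $\ell+1$ classes of $\gamma,t\gamma,\dots,t^\ell\gamma$ in the space $\Ded_{\le N'}/B_{N'/C}$, which has $\Fq$-dimension $\le\ell$, are $\Fq$-linearly dependent, so $g(t)\gamma\in B_{N'/C}$ for some nonzero $g\in\ded$ with $\deg g\le\ell$. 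Writing $\gamma=\sum_i f_i\alpha_i$, this means $\max_i\lnorm{gf_i}_\Ded\le N'/C$; since the canonical norm is multiplicative on $\ded$ and $\lnorm g_\Ded\ge 1$, this forces $\lnorm\gamma_{\bsb\alpha}=\max_i\lnorm{f_i}_\Ded\le N'/C=(q^{d\ell}/C)\,N$. Thus $\lnorm\gamma_{\bsb\alpha}\le(q^{d\ell}/C)\lnorm\gamma_\Ded$ once $\lnorm\gamma_\Ded$ is large enough, and the finitely many remaining $\gamma$ (each of finite $\lnorm-_{\bsb\alpha}$) are absorbed by enlarging the constant to some $C'$; together with the first inequality this gives the equivalence.

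\emph{Expected obstacle.} The only real difficulty is the second inequality. Riemann--Roch gives almost immediately that $B_{N/C}$ sits inside $\Ded_{\le N}$ with uniformly bounded index, but converting ``bounded index'' into the containment $\Ded_{\le N}\subseteq B_{C'N}$ is exactly what the scaling trick supplies, exploiting that multiplication by $t$ dilates both filtrations by the same factor $q^d$. (A more structural alternative would be to base change to the completion $\Fq((1/t))$, over which any two norms on the finite-dimensional vector space $\Ded\otimes_\ded\Fq((1/t))$ are automatically equivalent; but this uses input going beyond the Riemann--Roch theorem.)
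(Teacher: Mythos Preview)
Your proof is correct and takes a genuinely different route from the paper's. For the hard inequality, the paper argues that multiplication by $t$ on the successive quotients $\Ded_{\deg\le M+d}/\Ded_{\deg\le M}$ is injective (from exact multiplicativity of $t$) and eventually surjective (either by Riemann--Roch or by a direct argument that strips constant terms from the coefficients $f_i$), and then runs an induction on $\deg(\alpha)$ to bound the coefficients. You instead extract from Riemann--Roch a uniform bound $\ell$ on $\dim_{\Fq}(\Ded_{\le N}/B_{N/C})$ and use a pigeonhole/linear-dependence trick on $\gamma,t\gamma,\dots,t^\ell\gamma$ to manufacture a nonzero $g\in\ded$ with $g\gamma\in B_{N'/C}$, then divide out $g$ using multiplicativity of $\lnorm-_\Ded$ on $\ded$. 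Both arguments ultimately rest on the same two facts---the size estimate \eqref{eq:size-of-Ded-N} and the multiplicativity of the canonical norm on $\ded$ coming from Proposition~\ref{prop:choice-of-subring}---but package them differently: the paper's ``down-to-earth'' variant is slightly more self-contained (it avoids invoking Riemann--Roch at this particular step), while your argument is pleasantly non-inductive and makes the role of the dimension gap explicit.
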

\begin{proof}
    First, if $\lnorm{ f_i}_{\Ded } \le N$ holds for all $i$,
    then by the ultrametricity and submultiplicativity of $\lnorm - _{\Ded }$ we have
    \begin{equation}
        \lnorm {\sum _i f_i \alpha _i } _{\Ded }
        \le N\cdot \paren{\max _i \lnorm {\alpha _i }_\Ded}
    \end{equation}
    so that we have $\lnorm - _{\Ded } \le \lnorm - _{\bsb\alpha }\cdot \paren{\max _i \lnorm {\alpha _i }_\Ded }$.

    The inequality in the other direction is slightly harder.
For a notational reason, let us introduce
the degree function $\deg \colon \Ded \to \bbZ \cup \br{-\infty }$ defined by
\[ \deg (\alpha ) :=  \log _q(\lnorm\alpha _{\Ded }) . \]
For integers $M\ge 0$, denote by $\Ded _{\deg \le M}$ the $\Fq$-vector subspace of elements with degree $\le M$;
of course one has $\Ded _{\deg \le M}=\Ded _{\le (q^M)}$.
Note that by Proposition \ref{prop:choice-of-subring} \eqref{item:independent}, the element $t\in \Ded $ is {\em multiplicative} in the sense that the equality
\begin{equation}
    \lnorm {t\alpha }_{\Ded} = \lnorm t _\Ded \cdot \lnrom {\alpha }_{\Ded },
    \quad \te{i.e., } \deg (t\alpha ) = \deg (t)+\deg (\alpha )
\end{equation}
holds for all $\alpha \in \Ded $ rather than a mere inequality.
(Actually, {\em all} elements of $\ded $ are multiplicative by \eqref{eq:norm-of-f}.)
It follows that the following mutiplication by $t$ map
is injective for all $M\ge 0$, where we recall from Proposition \ref{prop:choice-of-subring} that $d=\deg (t)$:
\begin{equation}\label{eq:mult-by-t}
    \frac{\Ded _{\deg \le M}}{\Ded _{\deg \le M-d}}
    \xrightarrow{\times t}
    \frac{\Ded _{\deg \le M+d}}{\Ded _{\deg \le M}}
.\end{equation}
We claim that it is also surjective for all sufficiently large $M\ge 0$.
There are at least two ways to see this.
One is to use the Riemann-Roch theorem which tells us that both sides
of \eqref{eq:mult-by-t} have the same dimension for $M$ large enough.

The second is more down-to-earth.
Let $M_0:= \max _{1\le i\le \therank } \{ \deg (\alpha _i) \} $ and suppose $M\ge M_0$.
Write an arbitrary element $\alpha \in \frac{\Ded _{\deg \le M+d}}{\Ded _{\deg \le M}}$
in the form
\begin{equation}
    \alpha = \sum _i f_i (t) \alpha _i \mod \Ded _{\deg \le M}, \quad f_i (t)\in \ded .
\end{equation}
Since $\alpha _i \in \Ded _{\deg \le M_0} \subset \Ded _{\deg \le M}$
is zero in the group in question,
we may assume $f_i(t)$ has no constant term.
Then we have a well-defined element $\alpha /t := \sum _i \frac{f_i(t)}t \cdot \alpha _i $ which is in $\Ded _{\deg \le M}$
because $t$ is a multiplicative element.
Then $\alpha $ is the image of $\alpha /t $ under the map \eqref{eq:mult-by-t}.

In any case let $M_0$ be such that \eqref{eq:mult-by-t} is surjective for all $M\ge M_0$.
Now since $\Ded _{\deg \le M_0}$ is a finite set, there trivially exists an $e_0\ge 0$ such that
all $\alpha \in \Ded _{\deg \le M_0}$ can be written in the form
\begin{equation}
    \alpha = \sum _i f_i(t) \alpha _i \quad \te{ with } \deg (f_i(t) ) \le \deg (\alpha )+ e_0 .
\end{equation}
By induction on $\deg (\alpha )$ using the bijection \eqref{eq:mult-by-t}, the same holds for all $\alpha \in \Ded $.
In multiplicative terms, this precisely says there is a positive constant $C=q^{e_0}$ such that
\begin{equation}
    \lnorm \alpha _{\bsb \alpha } \le C \cdot \lnorm \alpha _{\Ded }  \quad\te{ for all }\alpha \in \Ded
.\end{equation}

This complets the proof of Proposition \ref{prop:equivalence-of-norms}.
\end{proof}

For the ``prime elements in an ideal'' case of Theorem \ref{thm:homothetic}, let $\ideala \subset \Ded $ be a non-zero ideal. It is also a rank $\therank $ free $\ded $-module.
We can consider the restriction of the canonical norm $\lnorm - _{\Ded }$ to $\ideala $
and the max norm $\lnorm -_{\boldsymbol{\beta }}$ with respect to an $\ded $-basis $\boldsymbol{\beta }=(\beta _1,\dots ,\beta _r )$ of $\ideala $.
\begin{corollary}\label{cor:equivalence-of-norms-ideal}
    The two linear norms $\lnorm -_{\Ded }$ and $\lnorm -_{\boldsymbol{\beta }}$ on $\ideala $ are equivalent.
\end{corollary}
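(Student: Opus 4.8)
The plan is to deduce this from Proposition \ref{prop:equivalence-of-norms} by comparing the two bases through a transition matrix. First I would fix an $\ded$-basis $\bsb\alpha=(\alpha_1,\dots,\alpha_\therank)$ of $\Ded$ and express the given basis $\boldsymbol{\beta}=(\beta_1,\dots,\beta_\therank)$ of $\ideala$ as $\beta_j=\sum_i T_{ij}\alpha_i$ with $T\in M_\therank(\ded)$. Since $\Ded/\ideala$ is finite, $T$ becomes invertible over $\Frac(\ded)=\Fq(t)$, so $\det T\neq 0$ and $T^{-1}=(\det T)^{-1}\mathrm{adj}(T)$ with $\mathrm{adj}(T)\in M_\therank(\ded)$. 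One direction is then immediate: for $\beta=\sum_j f_j\beta_j\in\ideala$ the $\bsb\alpha$-coordinates of $\beta$ are $g_i:=\sum_j T_{ij}f_j$, so ultrametricity and submultiplicativity of $\lnorm-_{\Ded}$ give $\lnorm\beta_{\bsb\alpha}\le\bigl(\max_{i,j}\lnorm{T_{ij}}_{\Ded}\bigr)\cdot\lnorm\beta_{\boldsymbol{\beta}}$.

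For the reverse bound I would invert the relation: from $g_i=\sum_j T_{ij}f_j$ one gets $f_j=(\det T)^{-1}\sum_i\mathrm{adj}(T)_{ji}g_i$. Because $\det T\in\ded$ is a multiplicative element of $\Ded$ by \eqref{eq:norm-of-f}, this yields $\lnorm{f_j}_{\Ded}=\lnorm{\det T}_{\Ded}^{-1}\,\lnorm{\sum_i\mathrm{adj}(T)_{ji}g_i}_{\Ded}$, and hence $\lnorm\beta_{\boldsymbol{\beta}}\le\lnorm{\det T}_{\Ded}^{-1}\bigl(\max_{i,j}\lnorm{\mathrm{adj}(T)_{ji}}_{\Ded}\bigr)\cdot\lnorm\beta_{\bsb\alpha}$. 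Thus $\lnorm-_{\boldsymbol{\beta}}$ and the restriction of $\lnorm-_{\bsb\alpha}$ to $\ideala$ are equivalent norms on $\ideala$; combining this with Proposition \ref{prop:equivalence-of-norms}, which gives the equivalence of $\lnorm-_{\bsb\alpha}$ with $\lnorm-_{\Ded}$ on $\Ded$ and a fortiori on $\ideala$, the corollary follows.

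Alternatively, one can simply rerun the proof of Proposition \ref{prop:equivalence-of-norms} with $\Ded$ replaced by $\ideala$ and $\bsb\alpha$ by $\boldsymbol{\beta}$: that argument used only that $\Ded$ is a free $\ded$-module carrying an ultrametric norm for which $t$ acts multiplicatively and for which the graded multiplication-by-$t$ maps as in \eqref{eq:mult-by-t} are eventually surjective, and all three properties persist for $\ideala$ (the down-to-earth surjectivity argument goes through verbatim, now expanding an arbitrary element of $\ideala$ as a $\ded$-combination of the $\beta_i$). I do not expect a genuine obstacle here; the only point requiring a little care — and the closest thing to a subtlety — is that $\ideala$ is a module and not a ring, so ``submultiplicativity'' should be read as the inequality $\lnorm{f\beta}_{\Ded}\le\lnorm f_{\Ded}\,\lnorm\beta_{\Ded}$ for $f\in\ded$ and $\beta\in\ideala$, which is immediate from submultiplicativity on $\Ded$, together with the observation (used above) that $\det T\in\ded$ is multiplicative.
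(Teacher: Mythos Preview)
Your proposal is correct and follows essentially the same approach as the paper. Both reduce via Proposition~\ref{prop:equivalence-of-norms} to comparing $\lnorm-_{\boldsymbol{\beta}}$ with the restriction of $\lnorm-_{\bsb\alpha}$, handle one inequality by ultrametricity, and for the reverse inequality exploit the multiplicativity of a nonzero element of $\ded$: you use $\det T$ and the adjugate explicitly, while the paper picks any $f(t)\in\ded$ with $f(t)\Ded\subset\ideala$ and applies the first inequality to the inclusion $f(t)\Ded\subset\ideala$; the paper also notes, as you do, that one could simply rerun the proof of Proposition~\ref{prop:equivalence-of-norms}.
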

\begin{proof}
    While the proof of Proposition \ref{prop:equivalence-of-norms} works for this case just as well,
    here we present a proof using the proposition.
    Let $\boldsymbol{\alpha }$ continue to be an $\ded $-basis of $\Ded $.
    By Proposition \ref{prop:equivalence-of-norms}, it suffices to show that the restriction of $\lnorm -_{\boldsymbol{\alpha }}$ to $\ideala $
    and $\lnorm -_{\boldsymbol{\beta }}$ are equivalent.

    Each $\beta _i$ can be written (uniquely) as $\beta _i=\sum _{1\le j\le \therank }g_{ij}\alpha _j$.
    Take a positive number $C$ such that $C\ge \lnorm{g_{ij}}_{\Ded }$ for all $i,j$.
    For an element $x= \sum _{i}f_i\beta _i  = \sum _j \paren{\sum _i f_ig_{ij}} \alpha _j $,
    by the ultrametricity of $\lnorm -_{\Ded }$ and the choice of $C$ we have:
    \begin{equation}
        \lnorm x _{\boldsymbol{\alpha }} = \max _j \br{\lnorm{\sum _i f_ig_{ij}}_{\Ded } }
        \le \max _{i,j} \br{ \lnorm{ f_ig_{ij} }_{\Ded} }
        \le C \max _i \br{ \lnorm{f_i}_{\Ded} }
        = C \lnorm x _{\boldsymbol{\beta}}
    .\end{equation}

    Next, by the theory of finitely generated modules over a principal ideal domain (say),
    we know that there is a non-zero element $f(t)\in \ded =\Fq [t]$ such that $f(t)\Ded \subset \ideala $.
    The previous argument applied to the element $f(t)x \in f(t)\Ded $ gives
    $\lnorm {f(t)x }_{\boldsymbol{\beta }}
        \le
        C\lnorm {f(t)x }_{f(t)\boldsymbol{\alpha }}$.
        By the definition of the max norm we can isolate the $f(t)$-factors so that:
    \begin{equation}
        \lnorm x _{\boldsymbol{\beta }}
        \le
        \frac C{\lnorm{f(t)}_{\Ded }}\cdot
        \lnorm {x }_{\boldsymbol{\alpha }}
    .\end{equation}
    This completes the proof.
\end{proof}

\begin{remark}
    Propositoin \ref{prop:equivalence-of-norms} fails if $\ded $ is not chosen as in Proposition \ref{prop:choice-of-subring} even if $\Ded $ is finite over $\ded $.
    For example, set $\ded := \Fq [t] $ and $\Ded := \Fq [s,s\inv ]$ and consider the homomorphism
    \begin{align}
        \ded &\inj \Ded
        \\
        t & \mapsto \frac{(s-1)^3}{s}
    .\end{align}
    If we let $f$ be the induced finite map $f\colon \bbP ^1\to \bbP ^1$, we have $f^* (\br{ \infty }) = 2\br{\infty }+\br{0}$ as divisors.
    The element $t$ is not a multiplicative element for the canonical norm $\lnorm -$ because for example $\lnorm{t/s}=\lnorm{(s-1)^3/s^2} =q^{2}$ which is not equal to
    $\lnorm t \cdot \lnorm {1/s} = q^{2}\cdot q^{1}$.
    Instead $t$ is a multiplicative element for the following linear norm $\lnorm - '\colon \Ded \to \bbR _{\ge 0}$:
    \begin{equation}
        \lnorm \alpha ' := \max \br{ q^{-v_\infty (\alpha )}, q^{-2v_0 (\alpha )} } .
    \end{equation}
    The arguments of the proof of Proposition \ref{prop:equivalence-of-norms} show that
    the max norm on $\Ded \cong \ded ^3$ is equivalent to $\lnorm - '$.
    However, one easily sees that $\lnorm - '$ is not equivalent to $\lnorm -$;
    for example, one has $\lnorm{ 1/s^n}=q^{n} $ and $\lnorm {1/s^n} '=q^{2n}$ so their ratio is not bounded.
\end{remark}

Recall from \eqref{eq:confusing} that we endow $\ded $ with the induced norm from $\Ded $
and so we have $N^{1/d}<\mgn{\ded _{\le N}} \le q\cdot N^{1/d}$ where the right hand inequality becomes an equality when $N$ is a power of $q^d$.
Let us note the following simple observation.

\begin{lemma}\label{lem:linear-map-inverse-image}
    Let $t,r\ge 1$ be positive integers and $\phi \colon \ded ^t \to \ded ^r$
    a surjective $\ded $-linear map.
    Then there exists a positive number $U>0$ such that for all $N>0$ and $x\in (\ded _{\le N})^r$,
    the following set
    \begin{equation}
        \phi ^{-1} (x ) \cap (\ded _{\le UN})^t
    \end{equation}
    contains at least $\mgn{\ded _{\le N}} ^{t-r}$ elements.
\end{lemma}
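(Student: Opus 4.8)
The plan is to reduce the statement to elementary linear algebra over the polynomial ring $\ded=\Fq[t]$, using the fact that the induced norm on $\ded$ is (up to the bounded factor recorded after Corollary~\ref{cor:equivalence-of-norms-ideal}) just $q^{d\cdot\deg}$, so that $\ded_{\le N}$ is essentially the space of tuples of polynomials of bounded degree. First I would pick a right inverse: since $\phi\colon\ded^t\to\ded^r$ is surjective and $\ded$ is a PID, after a change of basis (Smith normal form) we may assume $\phi$ is the projection onto the first $r$ coordinates; more precisely, there are $\ded$-linear isomorphisms $P\in GL_t(\ded)$ and $Q\in GL_r(\ded)$ with $Q\phi P=(\mathrm{id}_r\mid 0)$. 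Equivalently, $\phi$ admits an $\ded$-linear section $\sigma\colon\ded^r\to\ded^t$, and $\ker\phi$ is a free $\ded$-module of rank $t-r$, say with basis $e_1,\dots,e_{t-r}\in\ded^t$.

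Next I would produce many preimages of a given $x\in(\ded_{\le N})^r$ by writing the fibre as $\sigma(x)+\ker\phi=\sigma(x)+\sum_{j}\ded\, e_j$. Choosing the coefficients $g_1,\dots,g_{t-r}\in\ded$ freely subject to $\lnorm{g_j}\le N$ for each $j$ gives at least $\mgn{\ded_{\le N}}^{t-r}$ distinct elements of the fibre (distinctness is clear because $e_1,\dots,e_{t-r}$ are $\ded$-linearly independent and the map $(g_j)\mapsto\sum g_j e_j$ is injective). It remains to bound the $\ded$-norms of these elements: by ultrametricity of $\lnorm-$ on $\ded^t$ (coordinatewise max norm, which is ultrametric because $\lnorm-_\Ded$ is), we have
\[
    \lnorm{\sigma(x)+\textstyle\sum_j g_j e_j}
    \;\le\;\max\Bigl\{\lnorm{\sigma(x)},\ \max_j \lnorm{g_j e_j}\Bigr\}
    \;\le\;\max\Bigl\{\lnorm\sigma\cdot N,\ (\max_j\lnorm{e_j})\cdot N\Bigr\},
\]
where $\lnorm\sigma:=\max_i\lnorm{\sigma(\text{std basis}_i)}$ and the submultiplicativity/multiplicativity of elements of $\ded$ (Proposition~\ref{prop:choice-of-subring}) is used to pull the scalar $N$-bound out of $\lnorm{g_j e_j}$ and $\lnorm{\sigma(x)}$; here I also use that $\lnorm{x}\le N$ componentwise. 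Setting $U:=\max\{\lnorm\sigma,\ \max_j\lnorm{e_j},\ 1\}$, which depends only on $\phi$ and the fixed choices but not on $N$ or $x$, all these preimages lie in $(\ded_{\le UN})^t$, giving the claim.

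The only mildly delicate point is that the coordinatewise max norm on $\ded^t$ is genuinely ultrametric and that elements of $\ded$ scale norms exactly; both are already in hand from Proposition~\ref{prop:choice-of-subring} (all elements of $\ded$ are multiplicative for $\lnorm-$) and the ultrametricity of the canonical norm, so I expect no real obstacle. A cosmetic subtlety is the bounded discrepancy between $\mgn{\ded_{\le N}}$ and an exact power of $q$: since we only claim a lower bound on the size of the fibre intersection, we may simply use the exact count $\mgn{\ded_{\le N}}^{t-r}$ of the freely chosen tuples $(g_j)$, which is literally the number of $(t-r)$-tuples from $\ded_{\le N}$, so no rounding is needed.
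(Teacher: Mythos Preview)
Your proposal is correct and follows essentially the same approach as the paper: choose a linear section $\sigma$ of $\phi$, a basis of $\ker\phi$, and set $U$ to dominate the norms of the images of the standard basis under $\sigma$ and of the chosen kernel basis; then $\sigma(x)+\sum_j g_j e_j$ with $g_j\in\ded_{\le N}$ gives the required $\mgn{\ded_{\le N}}^{t-r}$ preimages in $(\ded_{\le UN})^t$ by ultrametricity and submultiplicativity. The Smith normal form remark is unnecessary but harmless, and your concluding paragraph on multiplicativity and rounding is accurate.
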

\begin{proof}
    Let us denote by $\lnorm - _{\ded ^t}$ and $\lnorm -_{\ded ^r}$ the max norms on $\ded ^t$ and $\ded ^r$ with respect to the standard bases.
    Choose an $\ded$-linear section $\sigma \colon \ded ^r \to \ded ^t$ to $\phi $.
    Let $\bsb e _i\in \ded ^r$ be the standard basis ($1\le i\le r$)
    and set $\bsb f_i := \sigma (\bsb e_i)$.
    We know $\ker (\phi )$ is a free $\ded $-module of rank $t-r$;
    choose a basis $\bsb f_{r+1},\dots ,\bsb f_{t}$ of $\ker (\phi )$.
    Let $U>0$ be larger than $\lnorm{ \bsb f_i}_{\ded ^t} $ for all $1\le i\le t$.
    We claim this $U$ works.

    Suppose we are given $N>0$ and $x\in (\ded _{\le N} )^\therank$.
    Writing $x$ in the form $x=\sum _{i=1}^r a_i\bsb e_i $, we see
    \begin{equation}
        \lnorm {\sigma (x)}_{\ded ^t } =\lnorm{\sum _{i=1}^r a_i\bsb f_i }_{\ded ^t} \le (\max _{i} \lnorm{ a_i})\cdot U \le NU .
    \end{equation}
    Also, for each choice of $a_{r+1},\dots ,a_t \in \ded _{\le N} $
    we have by the same reasoning:
    $    \lnorm{ \sum _{i=r+1}^t a_i \bsb f_i }_{\ded ^t} \le UN.$
    Therefore we get $\lnorm{\sigma (x) + \sum _{i=r+1}^t a_i \bsb f_i }_{\ded ^t}\le UN$
    for all choices of $a_i$'s as above.
    Since this last element is in $\phi ^{-1}(x)$, 
    our claim follows.
\end{proof}

\begin{corollary}\label{cor:equivalence-norms-inverse-image}
    Let $\phi \colon \ded ^t \to \ideala $
    be a surjective $\ded $-linear map.
    Then there exists a positive number $U>0$ such that for all $N>0$ and $x\in \ideala _{\le N}$,
    the following set
    \begin{equation}
        \phi ^{-1} (x ) \cap (\ded _{\le UN})^t
    \end{equation}
    contains at leat 
    $\mgn{\ded _{\le N}}^{t-\therank }$
    elements.
\end{corollary}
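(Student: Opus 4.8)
The plan is to deduce this from Lemma \ref{lem:linear-map-inverse-image} by trivializing the target ideal $\ideala $ as a free $\ded $-module and keeping track of the norm distortion by means of Corollary \ref{cor:equivalence-of-norms-ideal}.

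First I would fix an $\ded $-basis $\boldsymbol{\beta }=(\beta _1,\dots ,\beta _r)$ of $\ideala $, which is possible since $\ideala $ is a free $\ded $-module of rank $\therank $, and let $\psi \colon \ded ^{\therank }\xrightarrow{\sim }\ideala $ be the $\ded $-linear isomorphism sending the standard basis to $\boldsymbol{\beta }$. Writing $\lnorm{-}_{\boldsymbol{\beta }}$ for the associated max norm on $\ideala $, one has for every $M>0$ that $\psi ^{-1}$ identifies $\br{x\in \ideala \mid \lnorm{x}_{\boldsymbol{\beta }}\le M}$ with $(\ded _{\le M})^{\therank }$. By Corollary \ref{cor:equivalence-of-norms-ideal} there is a constant $C>0$, which I may take to be $\ge 1$, such that $\lnorm{-}_{\boldsymbol{\beta }}\le C\,\lnorm{-}_{\Ded }$ holds on $\ideala $; consequently $\psi ^{-1}$ carries $\ideala _{\le N}$ into $(\ded _{\le CN})^{\therank }$ for every $N>0$.

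Next I would apply Lemma \ref{lem:linear-map-inverse-image} to the surjective $\ded $-linear map $\psi ^{-1}\circ \phi \colon \ded ^t\to \ded ^{\therank }$, obtaining a constant $U_0>0$ with the stated property, and then set $U:=U_0C$. Given $N>0$ and $x\in \ideala _{\le N}$, I put $y:=\psi ^{-1}(x)\in (\ded _{\le CN})^{\therank }$ and invoke Lemma \ref{lem:linear-map-inverse-image} with $M=CN$: since $\phi ^{-1}(x)=(\psi ^{-1}\circ \phi )^{-1}(y)$ and $U_0\cdot CN=UN$, the set $\phi ^{-1}(x)\cap (\ded _{\le UN})^t$ contains at least $\mgn{\ded _{\le CN}}^{t-\therank }\ge \mgn{\ded _{\le N}}^{t-\therank }$ elements, the last inequality because $C\ge 1$ forces $\ded _{\le N}\subseteq \ded _{\le CN}$. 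This would finish the proof.

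I do not expect any genuine difficulty here: the corollary is a formal consequence of Lemma \ref{lem:linear-map-inverse-image} together with the norm equivalence of Corollary \ref{cor:equivalence-of-norms-ideal}, and the only point requiring a little care is the bookkeeping of multiplicative constants---in particular normalizing $C\ge 1$ so that the lower bound is not weakened when $N$ is replaced by $CN$. The boundary case $t=\therank $ needs no separate treatment: there $t-\therank =0$ and the assertion reduces to $\phi ^{-1}(x)$ being nonempty and contained in $(\ded _{\le UN})^t$, which is exactly what the inclusion $\psi ^{-1}(\ideala _{\le N})\subseteq (\ded _{\le CN})^{\therank }$ provides.
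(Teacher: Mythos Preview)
Your proposal is correct and is essentially the same approach as the paper's own proof, which simply says the statement ``immediately follows from Proposition \ref{prop:equivalence-of-norms}, Corollary \ref{cor:equivalence-of-norms-ideal} and Lemma \ref{lem:linear-map-inverse-image}''; you have spelled out the details of that deduction. One very minor quibble: your closing remark on the case $t=\therank$ is slightly imprecise (the inclusion $\psi^{-1}(\ideala _{\le N})\subseteq (\ded _{\le CN})^{\therank}$ controls $\psi^{-1}(x)$, not $\phi^{-1}(x)$), but your main argument already covers that case without any separate treatment, so this does not affect correctness.
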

\begin{proof}
    This immediately follows from Proposition \ref{prop:equivalence-of-norms}, Corollary \ref{cor:equivalence-of-norms-ideal} and Lemma \ref{lem:linear-map-inverse-image}.
\end{proof}

\subsection{``Geometry of numbers''}
Consider the group homomorphism $\phi \colon \Ded ^* \to \bigoplus _{v\in\complm} \bbZ $ defined by
$\alpha \mapsto ( v(\alpha ) )_v$.
It is known that its image has rank $n -1$:
\begin{equation}\label{eq:rank-of-image}
    \rank \bigpa{\image (\phi ) }=n-1 .
\end{equation}
For a proof, see \cite[Proposition 14.2, p.243]{Rosen}.
For the convenience of the reader
let us recall a proof in the language of sheaves.
Let $j\colon \spDed =\Spec \Ded \inj \kurve $ be the open immersion.
By the short exact sequence of sheaves on $\kurv $
(where $i_v\colon \br{v}\inj \kurve $ is the closed immersion):
\begin{equation}
    1\to \mcal O_{\kurv }^* \to j_*\mcal O^*_{\spDed } \xrightarrow \phi \bigoplus _{v\in\complm} i_{v*} \bbZ \to 0,
\end{equation}
we get an exact sequence $\Fq ^* \inj \Ded ^* \xrightarrow \phi  \bigoplus _v \bbZ \xrightarrow \delta \Pic (\kurv )$.
Since $\Pic (\kurv )= \bbZ \oplus (\text{finite})$
and the connecting map $\delta $ has nontrivial image into the $\bbZ $-part,
the claim \eqref{eq:rank-of-image} follows.

Now consider the map $\mcal L \colon \Ded\nonzero \to \bigoplus _{v\in \complm }\bbR \cong \bbR ^n$ defined by
\begin{equation}
    \mcal L (\alpha ) :=  (\log \lnrom \alpha _v ) _v
.\end{equation}
It is an obvious analog of the multiplicative Minkowski map $\mcal L$ that was also used in \cite[\S 4]{KMMSY}.
By \eqref{eq:product-formula} we know that $\mcal L$ maps the subset $\Ded ^*$ into the hyperplane $H$ of $\bbR ^n$
defined by:
\begin{equation}\label{eq:def-of-H}
    H=\br{ (x_1,\dots ,x_n)\mid x_1+\dots +x_n=0} \subset \bbR ^n
\end{equation}
and by \eqref{eq:rank-of-image} that $\mcal L (\Ded ^* )\subset H$ is a full-rank lattice.

Let us say a subset $\domain \subset \Ded \nonzero $ is {\em norm-length compatible}
if the set
\begin{equation}
    \left\{  \frac{\lnorm \alpha ^n}{\Nrm (\alpha )} \ \middle|\  \alpha \in \domain   \right\} \subset \bbR
\end{equation}
is bounded from above.
Note from \eqref{eq:def-of-canonical-norm} and \eqref{eq:product-formula} that
this set is always bounded from below by $1$.
As usual, a subset $\domain \subset \Ded \nonzero $ is called an {\em $\Ded ^*$-fundamental domain} of $\Ded \nonzero $
(or of $\Ded $ by slight abuse of terminology)
if the composite map $\domain \inj \Ded\nonzero \surj (\Ded\nonzero )/ \Ded ^*$ is a bijection.
As in \cite[\S 4.3]{KMMSY}, the following statement holds.

\begin{proposition}\label{prop:norm-length-compatible-domain}
    There exist norm-length compatible $\Ded ^*$-fundamental domains
    of $\Ded  $.
\end{proposition}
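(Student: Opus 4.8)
The plan is to work through the multiplicative Minkowski map $\mcal L$ and reduce the statement to a standard fact from the geometry of numbers: given a full-rank lattice $\Lambda$ in a Euclidean space $V$ (here $V = H \subset \bbR^n$), there is a bounded fundamental domain for $\Lambda$ acting on $V$ by translations, and moreover we can choose it so that every point lies within a fixed bounded distance of $\Lambda$. Concretely, fix a $\bbZ$-basis $u_1,\dots,u_{n-1}$ of the lattice $\mcal L(\Ded^*) \subset H$ and let $P := \{ \sum_i \theta_i u_i \mid 0 \le \theta_i < 1\}$ be the associated half-open parallelepiped; it is a bounded fundamental domain for the translation action of $\mcal L(\Ded^*)$ on $H$, say contained in a ball of radius $R$ about the origin.

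First I would use this to build the desired fundamental domain upstairs. Pick, once and for all, a set-theoretic section $s$ of the surjection $\Ded\nonzero \surj (\Ded\nonzero)/\Ded^*$; the issue is only to modify $s$ so that its image becomes norm-length compatible. Given a class $c \in (\Ded\nonzero)/\Ded^*$, pick any representative $\alpha$. The point $\mcal L(\alpha) \in H$ (it lies in $H$ by the product formula \eqref{eq:product-formula}, since for $\alpha \in \Ded\nonzero$ we have $\sum_v \log\lnorm\alpha_v = \log\Nrm(\alpha)$ — wait, that is not zero in general). Here I need to be slightly more careful: $\mcal L$ maps $\Ded^*$ into $H$, but for general $\alpha \in \Ded\nonzero$ the coordinate sum is $\log \Nrm(\alpha) \ge 0$. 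The relevant observation is that multiplying $\alpha$ by a unit $u$ translates $\mcal L(\alpha)$ by $\mcal L(u) \in \mcal L(\Ded^*)$, without changing $\Nrm(\alpha)$. So within the orbit $\Ded^* \alpha$ the vectors $\mcal L(u\alpha)$ form a coset of the lattice $\mcal L(\Ded^*)$ inside the affine hyperplane $\{x_1 + \dots + x_n = \log\Nrm(\alpha)\}$, which is a translate of $H$. Using the parallelepiped $P$ as fundamental domain for that translation action, there is a unique unit $u$ (up to $\Fq^*$, which does not affect $\mcal L$) with $\mcal L(u\alpha) - (\tfrac{1}{n}\log\Nrm(\alpha), \dots, \tfrac{1}{n}\log\Nrm(\alpha)) \in P$; define $s(c)$ to be $u\alpha$ for this $u$ (making an arbitrary choice among the $\Fq^*$-many, which is harmless). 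The resulting image $\domain := s\bigl((\Ded\nonzero)/\Ded^*\bigr)$ is by construction a fundamental domain.

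It remains to check that $\domain$ is norm-length compatible, i.e. that $\lnorm\alpha^n / \Nrm(\alpha)$ is bounded above on $\domain$. For $\alpha \in \domain$, by construction $\mcal L(\alpha) = (\log\lnorm\alpha_v)_v$ differs from the "balanced" vector $(\tfrac1n\log\Nrm(\alpha))_v$ by an element of $P$, hence by a vector of sup-norm at most $R$. Therefore $\log\lnorm\alpha_v \le \tfrac1n\log\Nrm(\alpha) + R$ for every $v \in \complm$, and taking the maximum over $v$ in \eqref{eq:def-of-canonical-norm} gives $\log\lnorm\alpha \le \tfrac1n\log\Nrm(\alpha) + R$, i.e. $\lnorm\alpha^n / \Nrm(\alpha) \le e^{nR} = O(1)$. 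This is exactly norm-length compatibility. I expect the main obstacle to be purely bookkeeping: setting up the translation action on the affine hyperplane $\{x_1+\dots+x_n = \log\Nrm(\alpha)\}$ cleanly, keeping track that changing the representative of $c$ inside its orbit only changes $\mcal L$ by a lattice vector (and is otherwise invisible after passing to $\Fq^*$-orbits), and invoking \eqref{eq:rank-of-image} to know that $\mcal L(\Ded^*)$ really is a full-rank lattice in $H$ (so that $P$ is genuinely a bounded fundamental domain). No deep input beyond \eqref{eq:product-formula}, \eqref{eq:rank-of-image} and elementary geometry of numbers is needed.
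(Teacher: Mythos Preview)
Your proposal is correct and takes essentially the same approach as the paper. The paper phrases the argument via the orthogonal-like projection $\pi\colon\bbR^n\to H$ along $(1,\dots,1)$ and observes that norm-length compatibility of $\domain$ is equivalent to boundedness of $\pi(\mcal L(\domain))$; your ``subtract the balanced vector $(\tfrac1n\log\Nrm(\alpha))_v$'' is precisely this projection, so the two arguments coincide once the notation is unwound, including the final step of choosing $\Fq^*$-representatives inside $\mcal L^{-1}(\pi^{-1}(P))$.
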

\begin{proof}
    Consider the function $f\colon \bbR ^n \to \bbR $; $(x_1,\dots ,x_n)\mapsto n\max _{i} \{ x_i \}  - \sum _i x_i $.
    It fits into the following commutative diagram:
    \begin{equation}
        \xymatrix{
            \Ded\nonzero \ar[r]^{\mcal L}\ar[d]_{\frac{\lnrom -^n}{\Nrm (-)}} & \bbR ^n\ar[d]_f \\
            \bbR \ar[r]_{\log }& \bbR .
        }
    \end{equation}
    Let $H \subset \bbR ^n$ be as in \eqref{eq:def-of-H} 
    and
    $\pi \colon \bbR ^n \to H$ the projection along the vector $(1,\dots ,1)$.
    Since the value of $f$ is unchanged by the translation along the vector $(1,\dots ,1)\in \bbR ^n$,
    it follows that the norm-length compatibility of a subset $\domain \subset \Ded \nonzero $ is equivalent to the boundedness
    of the non-negatively valued function $(x_1,\dots ,x_n)\mapsto \max _i x_i$ on the set $\pi (\mcal L (\domain )) \subset H$.
    This is equivalent to the boundedness of the set $\pi (\mcal L (\domain ))$ itself;
    recall that one can define the notion of {\em boundedness} of a subset of $\bbR ^n$
    using any choice of a linear norm and the resulting notion is independent of the choice.
    It follows that for any given bounded subset $\parallelo \subset H$, the set $\mcal L \inv (\pi \inv (\parallelo ))$ is a norm-length compatible subset of $\Ded \nonzero $.

    Recall by \eqref{eq:product-formula} and \eqref{eq:rank-of-image} that $\mcal L (\Ded ^*)$ is a full-rank lattice of $ H$.
    Let $\parallelo \subset H$ be any bounded complete set of representatives for the quotient $H/\mcal L (\Ded ^*)$ (say a fundamental parallelogram).
    The inverse image $\pi\inv (\parallelo ) \subset \bbR ^n$ is a complete set of representatives for the quotient $\bbR ^n/\mcal L(\Ded ^*)$.
    By the previous paragraph, the set $\mcal L^{-1}(\pi ^{-1}(\parallelo )) \subset \Ded \nonzero $ is norm-length compatible.
    It is acted on by the group $\Fq ^*=\ker (\mcal L\colon \Ded ^*\to \bbR ^n)$ and the natural map of quotient sets
    $\mcal L^{-1}(\pi ^{-1}(\parallelo )) / \Fq ^* \to (\Ded \nonzero ) / \Ded ^*$
    is a bijection.
    Therefore, any choice of an $\Fq ^*$-fundamental domain of $\mcal L^{-1}(\pi ^{-1}(\parallelo ))$ gives a norm-length compatible
    $\Ded ^*$-fundamental domain of $\Ded $.
\end{proof}

For $\alpha \in \Ded $, one can consider its $\Ded ^*$-orbit $\alpha \Ded ^*$.
We will need the following bound.
For the big-$O$ notation, see Notation at the end of Introduction.

\begin{proposition}\label{prop:number-of-associates}
    For all $N\ge q$ and $\alpha \in \Ded _{\le N}$, we have
    \begin{equation}
        \mgn{(\alpha \Ded ^*)\cap \Ded _{\le N}}
        = O_{\Ded } \Bigpa{(\log N - \frac 1n\log \Nrm (\alpha ) )^{n-1} +1 }.
    \end{equation}
\end{proposition}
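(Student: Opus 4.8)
The plan is to translate the bound into a lattice-point count via the Minkowski-type map $\mcal L\colon \Ded\nonzero \to \bbR^n$ and then use the fact, established in Proposition \ref{prop:norm-length-compatible-domain}, that $\mcal L(\Ded^*)$ is a full-rank lattice in the hyperplane $H$ of $\bbR^n$. First I would set $L:=\log N$ and $M:=\tfrac1n\log\Nrm(\alpha)$, so that the right-hand side is $O_\Ded((L-M)^{n-1}+1)$. An associate $\alpha u$ of $\alpha$ (with $u\in\Ded^*$) lies in $\Ded_{\le N}$ exactly when $\lnorm{\alpha u}\le N$, i.e.\ $\log\lnorm{\alpha u}_v\le L$ for every $v\in\complm$; and since $u\in\Ded^*$ satisfies the product formula, we also have $\sum_v\log\lnorm{\alpha u}_v = \log\Nrm(\alpha u)=\log\Nrm(\alpha)=nM$. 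Writing $x:=\mcal L(\alpha u)=\mcal L(\alpha)+\mcal L(u)\in \mcal L(\alpha)+\mcal L(\Ded^*)$, the condition becomes: $x$ lies in the simplex-like region
\begin{equation}
    \Sigma_{L,M}:=\br{x\in\bbR^n\mid x_i\le L\ \forall i,\ \textstyle\sum_i x_i = nM}.
\end{equation}
So $\mgn{(\alpha\Ded^*)\cap\Ded_{\le N}}$ is at most the number of points of the coset $\mcal L(\alpha)+\mcal L(\Ded^*)$ lying in $\Sigma_{L,M}$ (it equals that number divided by $\mgn{\ker(\mcal L|_{\Ded^*})}=\mgn{\Fq^*}$, but the upper bound suffices).

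Next I would observe that $\Sigma_{L,M}$ is contained in the affine hyperplane $\br{\sum_i x_i=nM}$, which is a translate of $H$, and inside that hyperplane it is cut out by the $n$ inequalities $x_i\le L$. Its diameter is controlled by $L-M$: indeed if $x\in\Sigma_{L,M}$ then each coordinate satisfies $x_i\le L$ and $x_i = nM-\sum_{j\ne i}x_j\ge nM-(n-1)L = nM - nL + L$, so $x_i - (\text{anything in }\Sigma_{L,M})$ is bounded in absolute value by $n(L-M)$. Hence $\Sigma_{L,M}$ is contained in a cube of side $O_n(L-M)$ inside the hyperplane (when $L\ge M$; note $L\ge M$ always holds here because $\lnorm\alpha^n\ge\Nrm(\alpha)$ by \eqref{eq:def-of-canonical-norm} and \eqref{eq:product-formula}, i.e.\ $L\ge M$ — and more precisely $L-M\ge 0$). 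The number of points of a fixed full-rank lattice $\Lambda\subset H$ (or any translate of it) in a ball of radius $R$ in the $(n-1)$-dimensional space $H$ is $O_\Lambda(R^{n-1}+1)$, by a standard volume-packing argument (pack disjoint translates of a fundamental domain around each lattice point; all fit in a slightly larger ball). Applying this with $\Lambda=\mcal L(\Ded^*)$ and $R=O_n(L-M)$ yields the bound $O_\Ded((L-M)^{n-1}+1)$, which is exactly the claim.

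The one technical point that needs care — and the main obstacle — is that $\mcal L$ is not injective on $\Ded\nonzero$: its fibers on $\Ded^*$ are cosets of $\Fq^*$, and a priori distinct associates $\alpha u,\alpha u'$ of $\alpha$ could have the same image under $\mcal L$. But this is harmless: if $\mcal L(\alpha u)=\mcal L(\alpha u')$ then $\mcal L(u/u')=0$, so $u/u'\in\Fq^*$ and there are at most $q-1=O(1)$ such; thus the true count is at most $\mgn{\Fq^*}$ times the number of lattice-coset points in $\Sigma_{L,M}$, and the $O_\Ded$ absorbs the factor $\mgn{\Fq^*}$. The other thing to be slightly careful about is the range $N\ge q$, i.e.\ $L\ge 1$: this is only needed so that the ``$+1$'' genuinely dominates the small cases and the lattice-point estimate $O_\Lambda(R^{n-1}+1)$ is uniform; when $n=1$ the region $\Sigma_{L,M}$ is a single point and the statement reads $O_\Ded(1)$, consistent with $\mgn{\Ded^*}=\mgn{\Fq^*}<\infty$. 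I would present the volume-packing lemma explicitly, since it is the quantitative heart of the argument, and then assemble the pieces as above.
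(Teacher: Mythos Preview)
Your proposal is correct and follows essentially the same route as the paper: translate the count via $\mcal L$ to a lattice-point problem for the full-rank lattice $\mcal L(\Ded^*)\subset H$ inside an $(n-1)$-dimensional simplex of side $\sim \log N-\tfrac1n\log\Nrm(\alpha)$, then invoke a standard lattice-point estimate. The only cosmetic differences are that the paper first translates everything into $H$ itself (writing the region as a $\Delta'_{\le C}$) and cites \cite[Appendix A]{GT2} for the count, whereas you work in the affine hyperplane $\{\sum_i x_i=nM\}$ and sketch the packing argument directly; also the paper notes the exact equality $\mgn{(\alpha\Ded^*)\cap\Ded_{\le N}}=\mgn{\Fq^*}\cdot(\text{lattice count})$ rather than just the inequality, but for the $O_\Ded$ bound this is immaterial.
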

    The term ``$+1$'' is there only to cover the rare case $\Nrm (\alpha )=N^n$ (the largest possible). For the application in this paper, its corollary $\mgn{(\alpha \Ded ^*)\cap \Ded _{\le N}}=O_{\Ded }((\log N)^{n-1})$ suffices and saves space.
\begin{proof}
    We may assume $\alpha \neq 0$.
    For a real number $C$, let $\bbR ^n_{\le C}$ be the set of points $(x_1,\dots ,x_n)$ with $x_i\le C$ for all $i$.
    Let $H\subset \bbR ^n$ as in \eqref{eq:def-of-H} and define $\Delta _{\le C}:= \bbR ^n_{\le C}\cap H$, which is an $(n-1)$-dimensional simplex.
    Let us use the symbol $\Delta '_{\le C}$ to denote translations of $\Delta _{\le C}$ inside $H$.
    Write $\Gamma := \mcal L (\Ded ^*)$.
    Sending the set in question by $\mcal L$, 
    we see:
    \begin{equation}
            \mgn{(\alpha \Ded ^*)\cap \Ded _{\le N}}
            = \mgn{\Fq ^* }\cdot \mgn{ (\mcal L(\alpha )+\Gamma )\cap \bbR ^n_{\le \log N} }
        .
    \end{equation}
    So it suffices to bound the size of the set on the right hand side.
By the translation ``$-\mcal L(\alpha )$'' we have a bijection
\begin{equation}
    (\mcal L(\alpha )+\Gamma )\cap \bbR ^n_{\le \log N}
\cong \Gamma \cap (\bbR ^n_{\le \log N} - \mcal L(\alpha ))
.\end{equation}
Next $\bbR ^n_{\le \log N} - \mcal L(\alpha )$ is the translation of $\bbR ^n_{\le \log N-\frac 1n \log \Nrm (\alpha )} $ by the vector
    $-\mcal L(\alpha )+\frac 1n \log \Nrm (\alpha )\cdot (1,\dots ,1) \in H$.
Therefore the intersection $H\cap \bigpa{\bbR ^n_{\le \log N} - \mcal L(\alpha )}$ is a $\Delta '_{\le \log N-\frac 1n \log \Nrm (\alpha )}$.
Since $\Gamma $ is contained in $H$ anyway, we can write
\begin{equation}
    \Gamma \cap (\bbR ^n_{\le \log N} - \mcal L(\alpha ))
    = \Gamma \cap \Delta '_{\le \log N-\frac 1n \log \Nrm (\alpha )}
.\end{equation}
Now since $\Gamma $ is a lattice in $H$, the cardinality of a set of the form $\Gamma \cap \Delta '_{\le C}$
is asymptotically proportional to $C^{n-1}$ as $C\to +\infty $ with error bounded independently of the specific translation $\Delta _{\le C}\leadsto \Delta '_{\le C}$;
see for example \cite[Appendix A]{GT2}.
This completes the proof.
\end{proof}

\section{\Sz 's theorem}\label{sec:Sz}
Let us keep the notation $\Ded $ and $\ded $ from the previous section.
Namely $\Ded $ is a Dedekind domain finitely generated over $\Fq $
in which $\Fq $ is integrally closed
and $\ded =\Fq [t]$ is a subring of $\Ded $ as in Proposition \ref{prop:choice-of-subring}.
In case the reader is interested in the ``prime elements of ideals'' case of Theorem \ref{thm:homothetic}, let $\ideala \subset \Ded $ be a non-zero ideal.
(If not, they can always assume $\ideala =\Ded $.)
Recall that an {\em $\ded $-homothetic copy} of a (finite) subset $S\subset \ideala $
is by definition
a subset of $\ideala $ of the form
 \[ a \cdot S + \beta  = \br{ a\alpha + \beta \mid \alpha \in S  }\]
 with $a \in \ded $ and $\beta \in \ideala $.
 It it said to be {\em non-trivial} if we can take $a\neq 0$.

The next definition is a rather straightforward translation of \cite[Definition 5.3]{KMMSY}.

\begin{definition}\label{def:S-N-rho-o-pseudorandom}
    \begin{enumerate}
        \item\label{item:standard-shape}
        A finite subset $S\subset \ideala $ is said to be a {\em standard shape}
        if it contains $0$ and generates $\ideala $ as an $\ded $-module.
        (The word ``shape'' is used because $\bbZ $-homothetic copies of $S$ in an abelian group are also called {\em constellations with shape $S$} especially in the context of torsion-free abelian groups.)
        In the rest of this Definition, assume $S$ is a standard shape.

        \item\label{item:def-of-linear-maps}
        Write $k=\mgn S$ and give $S$ a numbering $S=\br{ s_1,\dots ,s_{k-1}, s_k=0 }$ for the ease of notation. 
        Let
        \begin{equation}
            \phi _S \colon \ded ^{k-1} \surj \ideala
        \end{equation}
        be the surjective $\ded $-linear map which sends the standard vector $\bsb e_i$ ($1\le i\le k-1$) to $s_i$.

        Let us denote elements of the sum $\ded ^{k}\oplus \ded ^{k}$ by symbols like $(x_i^{\pm } )_{1\le i\le k}$.
        Given an index $1\le j\le k$ and a function $\omega \colon \br{1,\dots ,\hat j,\dots ,k} \to \{ \pm \} $, we obtain an element $(x_i^{\omega (i)})_{i\in \br{1,\dots ,k}\setminus \br j} $ of $\bigoplus\limits _{i\in \br{1,\dots ,k}\setminus \br j}\ded \cong \ded ^{k-1}$.
        Let us call this map the {\it restriction} along $\omega $ and denote it by $\res _\omega \colon \ded ^{k}\oplus \ded ^k \to \ded ^{k-1}$.

        For $1\le j\le k$, we define a surjective $\ded $-linear map
        \begin{equation}
         \psi _{S,j}\colon \bigoplus _{i\in \br{1,\dots ,k}\setminus \br j}\ded  \surj \ideala
        \end{equation}
        as follows:
        we define $\psi _{S,k}:= \phi _S$.
        For $1\le j\le k-1$ we define $\psi _{S,j}(x_1,\dots \widehat{x_j},\dots ,x_k):= x_k s_j+\sum\limits _{i\in \br{1,\dots ,k-1}\setminus \br j } x_i (s_i-s_j)$.

        \item\label{item:def-of-pseudorandom}
        Let $\rho >0$ be a positive real number and
        $N_0>0$ a positive integer.
        A non-negatively valued function $\lambda \colon \Ded \to \bbR _{\ge 0}$ is said to be
        an {\em $(S,N_0,\rho ,\ded )$-pseudorandom measure} if for every choice of the data below:
        \begin{itemize}
            \item a subset $\mathcal B\subset \ded ^{k } $ which is the product of translates of $\ded _{\le N}$ with $N\ge N_0$,
            \item a subset $\Omega \subset \bigsqcup _{1\le j\le k} \br\pm ^{ \br{1,\dots ,\hat j ,\dots ,k} }$
            (namely a set of pairs $(j,\omega )$ of an index $1\le j\le k$ and a function $\omega \colon \br{1,\dots ,\hat j,\dots ,k} \to \br\pm $),
        \end{itemize}
        we have the inequality:
        \begin{equation}\label{eq:S-N-rho-ded-pseudorandom}
            \left|
            \bbE \paren{
                \prod _{(j,\omega )\in \Omega } (\lambda \circ \psi _{S,j}\circ \res _\omega )
                \emidd \mcal B\times \mcal B
                }
                -1 \right|
                \quad < \quad \rho .
            \end{equation}
        \end{enumerate}
\end{definition}

We will need the following form of relative \Sz\ theorem.

\begin{theorem}\label{thm:Sz}
    Let $S\subset \ideala $ be a standard shape and $\delta >0$ a positive real number.
    Then there exist positive real numbers
    $\rho =\rho (\ded ,\ideala ,S, \delta )$ and $\gamma =\gamma (\ded ,\ideala ,S, \delta )>0$
    such that the following holds.
    Let $\lambda \colon \ideala \to \bbR _{\ge 0}$ satisfy the
    $(S,N_0,\rho ,\ded )$-psedorandomness condition for some $N_0\ge \Nrm(\ideala ) q^{2g-1}$
    (see Definition \ref{def:S-N-rho-o-pseudorandom} for this condition).
    Let $N\ge N_0$ and $A\subset \ideala _{ \le N} $.
    Suppose the following two inequalities are satisfied:
    \begin{align}
        \bbE (\lambda \cdot \ichi _A \emid \ideala _{ \le N} ) &\ge \delta ,
        \label{eq:Sz-positive-density}
        \\[3mm]
        \bbE (\lambda ^{k } \cdot \ichi _A \emid \ideala _{ \le N} ) &\le \gamma N
        , \quad\te{ where }k:=\mgn S.
        \label{eq:Sz-smallness}
    \end{align}
    Then the following inequality holds:
    \begin{equation}\label{eq:Sz-conclusion}
        \bbE \paren{ \prod _{s\in S}(\lambda\cdot \ichi _A) (as +\beta )  \emidd (a,\beta )\in \ded _{\le N} \times \ideala _{ \le N}
         } 
         > \gamma
    .\end{equation}
    In particular there exist non-trivial $\ded $-homothetic copies of $S$ contained in $A$.
\end{theorem}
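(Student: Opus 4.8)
The plan is to run the Green--Tao transference argument in the form carried out for number rings in \cite[\S 5]{KMMSY}, which itself follows \cite{Green-Tao08}. The one structural simplification available here is that, by ultrametricity, the balls $\ded_{\le N}$, $\ideala_{\le N}$ and $\Ded_{\le N}$ are already \emph{finite $\Fq$-vector spaces}, so that one works directly inside such a group and never passes to a cyclic-group model or worries about ``wraparound''. First I would fix the ambient group. Put $C_S := \max_{s\in S}\lnorm{s}$, a positive constant (we may assume $S\neq\{0\}$). Since $\lnorm{a\alpha+\beta}\le\max\{\lnorm{a}\cdot\lnorm{\alpha},\ \lnorm{\beta}\}$, every configuration $aS+\beta$ with $(a,\beta)\in\ded_{\le N}\times\ideala_{\le N}$ lies in the subgroup $G:=\ideala_{\le C_S N}\supseteq\ideala_{\le N}$, whose index over $\ideala_{\le N}$ is $O_S(1)$ by \eqref{eq:size-of-ideala-N}. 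Regarding $\lambda$ and $f:=\lambda\cdot\ichi_A$ as functions on $G$, one has $0\le f\le\lambda$, and \eqref{eq:Sz-positive-density} gives $\bbE(f\emid G)\ge\delta/O_S(1)=:\delta'$.

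The analytic core I would import essentially verbatim from \cite[\S 5]{KMMSY}. The $(S,N_0,\rho,\ded)$-pseudorandomness of Definition \ref{def:S-N-rho-o-pseudorandom} is designed to be exactly the ``linear forms condition'' for the configuration system attached to $S$: the surjections $\psi_{S,j}\circ\res_\omega$ are precisely the affine-linear forms produced when the count $\bbE(\prod_{s\in S}f(as+\beta)\emid\ded_{\le N}\times\ideala_{\le N})$ is expanded by iterated Cauchy--Schwarz, and the products over $(j,\omega)\in\Omega$ in \eqref{eq:S-N-rho-ded-pseudorandom} are exactly the averages of shifted copies of $\lambda$ that one must pin to $1+O(\rho)$. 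Granting this, the argument runs in three standard moves. (i) The generalized von Neumann theorem: replacing $f$ by any $g$ with $0\le g\le 1+O(\rho)$ and with $f-g$ small in the relevant Gowers-box norm alters $\bbE(\prod_{s\in S}f(as+\beta)\emid\ded_{\le N}\times\ideala_{\le N})$ by at most $\varepsilon(\rho)$, where $\varepsilon(\rho)\to0$ as $\rho\to0$. (ii) The dense model (transference) theorem, applied to $0\le f\le\lambda$, produces such a bounded $g\colon G\to[0,2]$ with $\bbE(g\emid G)=\bbE(f\emid G)\ge\delta'$. (iii) A quantitative multidimensional Szemer\'edi theorem in the $\Fq[t]$-setting, applied to the bounded function $g$, gives $\bbE(\prod_{s\in S}g(as+\beta)\emid\ded_{\le N}\times\ideala_{\le N})\ge c(\delta',S)>0$, with the contribution of $a=0$ negligible. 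Taking $\rho=\rho(\ded,\ideala,S,\delta)$ small enough that $\varepsilon(\rho)<c(\delta',S)/2$ and setting $\gamma:=c(\delta',S)/2$ then yields \eqref{eq:Sz-conclusion}.

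Finally, for the last assertion: the contribution of the \emph{trivial} configurations (those with $a=0$) to the left side of \eqref{eq:Sz-conclusion} equals $|\ded_{\le N}|^{-1}\,\bbE(\lambda^k\ichi_A\emid\ideala_{\le N})$, which the smallness hypothesis \eqref{eq:Sz-smallness} renders negligible against the main term $c(\delta',S)$; hence the non-trivial configurations already contribute a positive amount, so some non-trivial $\ded$-homothetic copy of $S$ is contained in $A$.

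I expect the main obstacle to be moves (i) and (ii): verifying that the precise list of forms encoded in Definition \ref{def:S-N-rho-o-pseudorandom} is exactly what the generalized von Neumann and dense model theorems consume --- i.e. the combinatorial bookkeeping of the Cauchy--Schwarz expansions, and the matching of our notion of ``pseudorandom measure'' to a measure satisfying the linear forms condition for the $S$-system --- together with pinning down a sufficiently quantitative multidimensional Szemer\'edi theorem over $\Fq[t]$ for move (iii). Everything else should be a faithful transcription of \cite[\S 5]{KMMSY}, lightened by the ultrametric structure (in particular the fact that $\Ded_{\le N}$ and $\ded_{\le N}$ are honest finite groups).
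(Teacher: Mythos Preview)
Your proposal outlines a genuinely different route from the paper's. You sketch the original Green--Tao transference architecture (generalized von Neumann $+$ dense model theorem $+$ a multidimensional Szemer\'edi theorem over $\Fq[t]$), whereas the paper---and, contrary to what you say, also \cite[\S 5]{KMMSY}---invokes the Conlon--Fox--Zhao \emph{Relative Hypergraph Removal Lemma} (Theorem~\ref{thm:hypergraph}) as a single black box. Concretely, the paper builds a $(k-1)$-uniform weighted hypergraph whose vertex classes $V_i$ are families of parallel hyperplanes in $\ded^{k-1}$, with weights $\nu_j=\lambda\circ\phi_S\circ T_j$; the $(S,N_0,\rho,\ded)$-pseudorandomness of Definition~\ref{def:S-N-rho-o-pseudorandom} is designed to be \emph{literally} the $\rho$-pseudorandomness hypothesis \eqref{eq:rho-pseudorandom} of that hypergraph, and the edge sets $E_j=T_j^{-1}\phi_S^{-1}(A)$ are chosen so that simplices correspond to homothetic copies of $S$. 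One then argues by contradiction: if \eqref{eq:Sz-conclusion} failed, then together with \eqref{eq:Sz-smallness} (which handles the $a=0$ slice) the removal hypothesis \eqref{eq:hypergraph-thm-inequality} holds, the removal lemma produces $E_j'\subset E_j$, and a counting argument via Corollary~\ref{cor:equivalence-norms-inverse-image} shows this forces $\bbE(\lambda\ichi_A\mid\ideala_{\le N})<\delta$, contradicting \eqref{eq:Sz-positive-density}.

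Your three-step approach is in principle workable, but the worry you flag is real and is precisely why the paper goes the other way: the list of forms $\psi_{S,j}\circ\res_\omega$ in Definition~\ref{def:S-N-rho-o-pseudorandom} is tailored to the CFZ hypergraph link condition, not to a Gowers-norm linear forms condition, so your step~(i) would require re-deriving that match; and your step~(iii) needs a separate quantitative multidimensional Szemer\'edi input, which the CFZ removal lemma already packages internally. The hypergraph route thus buys you exactly the bookkeeping you were anxious about, at the cost of importing one deeper combinatorial theorem.
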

This statement and its proof are an immediate analog of
\cite[Theorem 5.4]{KMMSY}.
Nonetheless we write down the proof for the convenience
of the reader.
For this we have to recall the notion of {\em weighted hypergraphs} and borrow results on them from combinatorics.
We will content ourselves with the following narrower definition than usual.
\begin{definition}
    An {\em $r$-uniform weighted hypergraph} consists of the following data:
    \begin{itemize}
        \item a finite set $J$;
        \item a finite set $V_i$ of vertices given for each $i\in J$;
        \item for each subset $e\subset J$ with cardinality $r$, a weight function $\nu _e \colon \prod _{i\in e}V_i \to \bbR _{\ge 0}$.
    \end{itemize}
\end{definition}
The case where $\nu _e$ have values in $\br{0,1 }$ recovers the notion of an {\em $r$-uniform hypergraph}
by interpreting the value $0$ as ``no $r$-edge'' and $1$ as ``an $r$-edge.''
The case $r=2$ corresponds to classical (weighted, $\mgn J$-partite) graphs.

    Consider the product $\prod _{i\in J} V_i \times \prod _{i\in J} V_i$ and denote its elements by symbols
    like $(x_i^\pm )_i $.
    Paralelly to the above, for a subset $e\subset J$ and a function $\omega \colon e\to \br\pm $
    we get an element $(x_i^{\omega (i)})_{i\in e}\in \prod _{i\in e}V_i$.
    This defines the {\it restriction} along $\omega $, denoted by
    $\res _\omega \colon \prod _{i\in J} V_i \times \prod _{i\in J} V_i\to \prod _{i\in e} V_i$.

    \begin{definition}\label{def:rho-pseudorandom}
            For a positive real number $\rho >0$, an $r$-uniform weighted hypergraph as above is said to be {\em $\rho $-pseudorandom}
            if for all choices of
            a subset $\Omega \subset \bigsqcup\limits _{\begin{subarray}{c}e\subset J\\ \te{with }\mgn e=r \end{subarray}} \br\pm ^e $
            (namely a set of pairs $(e,\omega )$ of a subset $e\subset J$ with $\mgn e=r$ and a function $\omega \colon e\to \br\pm $),
            the following estimate holds:
    \begin{equation}\label{eq:rho-pseudorandom}
        \left|
        \bbE \paren{
            \prod _{(e,\omega )\in \Omega }
                (\nu _e \circ \res _\omega )
            \emidd  
            \prod _{i\in J}V_i \times \prod _{i\in J} V_i
        }
        - 1
        \right|
        <  \rho .
    \end{equation}
\end{definition}

The next theorem is a deep result from combinatorics.

\begin{theorem}[{Relative Hypergraph Removal Lemma \cite{CFZ}}]\label{thm:hypergraph}
    Let $0<r\le k$ be positive integers and $\varepsilon >0$ be a positive real number.
    Then there exist positive real numbers $\gamma =\gamma (r,k,\varepsilon )$ and
    $ \rho =\rho (r,k,\varepsilon )>0$ such that the following holds.

    Let $((V_i)_{i\in J},(\nu _e)_{e\subset J, \mgn e=r})$ be a $\rho $-pseudorandom $r$-uniform weighted hypergraph.
    Suppose given a subset $E_e\subset \prod _{i\in e}V_i$ for each $e\subset J$ with $\mgn e=r$ and
    suppose that the following inequality holds:
    \begin{equation}\label{eq:hypergraph-thm-inequality}
        \bbE \paren{
            \prod _{e\subset J, \mgn e=r} (\nu _e\ichi _{E_e}) ((x_i)_{i\in e})
        \emidd (x_i)_{i\in J} \in \prod _{i\in J} V_i
            }
            \le \gamma .
    \end{equation}
    Then there is a family of subsets $E_e'\subset E_e$ for $e\subset J$ with $\mgn e=r$
    such that:
    \begin{align}
        \label{eq:no-triangle}
        &\bigcap _{e\subset J,\ \mgn e=r} E_e' \times V_{J\setminus e} = \varnothing, \quad\te{ and }
        \\
        \label{eq:few-edges-removed}
        &\bbE (\nu _e \ichi _{E_e\setminus E_e'} \emid V_e) \le \varepsilon \quad\te{ for all }e
    .\end{align}
\end{theorem}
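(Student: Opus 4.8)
The plan is to deduce this sparse statement from the classical \emph{dense} hypergraph removal lemma (Gowers; Nagle--R\"odl--Schacht--Skokan; Tao) by the \emph{densification} method of Conlon--Fox--Zhao, with the pseudorandomness hypothesis \eqref{eq:rho-pseudorandom} serving throughout as a linear forms condition; write $k=|J|$, so that a ``simplex'' is a copy of the complete $r$-uniform hypergraph $K^{(r)}_k$ on $J$. The first step I would carry out is a \emph{relative counting lemma}: for arbitrary functions $g_e\colon \prod_{i\in e}V_i\to[0,1]$ ($e\subset J$, $|e|=r$) and any sub-collection $T$ of the $r$-subsets of $J$,
\[
    \bbE\paren{ \prod_{e\in T}(\nu_e g_e)\cdot\prod_{e\notin T}g_e \emidd \prod_{i\in J}V_i }
    = \bbE\paren{ \prod_{e}g_e \emidd \prod_{i\in J}V_i } + O_{r,k}(\rho^{c})
\]
for some $c=c(r,k)>0$, i.e.\ multiplying the factors by pseudorandom weights barely changes the weighted count of $K^{(r)}_k$'s. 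I would prove this by deleting the weights $\nu_e$ one at a time; a single deletion of $\nu_{e_0}$ is effected by applying the Cauchy--Schwarz inequality successively in the $r$ coordinates indexed by $e_0$, which doubles those vertex classes, and expanding the squares produces exactly averages of the shape occurring on the left of \eqref{eq:rho-pseudorandom} for a suitable set $\Omega$ of pairs $(e,\omega)$, each within $\rho$ of $1$. As there are only $O_{r,k}(1)$ deletions, the accumulated error is $O_{r,k}(\rho^{c})$.

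Next I would densify. Fix an enumeration $e^{(1)},\dots,e^{(m)}$ of the $r$-subsets of $J$ ($m=\binom{k}{r}$) and replace $\nu_{e^{(\ell)}}\ichi_{E_{e^{(\ell)}}}$, for $\ell=1,\dots,m$ in turn, by the bounded function (of the coordinates $(x_i)_{i\in e^{(\ell)}}$)
\[
    g_{e^{(\ell)}} := \bbE\paren{ \prod_{j<\ell}g_{e^{(j)}}\cdot\prod_{j>\ell}(\nu_{e^{(j)}}\ichi_{E_{e^{(j)}}}) \emidd (x_i)_{i\in J\setminus e^{(\ell)}} } \big/ \bbE\paren{ \prod_{j\ne\ell}\nu_{e^{(j)}} \emidd (x_i)_{i\in J\setminus e^{(\ell)}} },
\]
namely the conditional weighted count of $K^{(r)}_k$'s through the edge $(x_i)_{i\in e^{(\ell)}}$, divided by its pseudorandom prediction. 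The counting lemma of the previous paragraph shows this ratio is $\le 1+O_{r,k}(\rho^{c})$, so after a harmless renormalization $g_{e^{(\ell)}}\le1$; and multiplying $g_{e^{(\ell)}}$ back by $\nu_{e^{(\ell)}}\ichi_{E_{e^{(\ell)}}}$ and averaging over the remaining coordinates leaves the total count unchanged by construction. After $m$ steps one obtains a \emph{dense} weighted hypergraph $(g_e)_e$ with all weights in $[0,1]$, total simplex count $\bbE(\prod_e g_e\emid\prod_i V_i)\le\gamma+O_{r,k}(\rho^{c})$, and supports $\{g_e>0\}$ contained in (the images of) the original sets $E_e$.

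Then I would apply the dense hypergraph removal lemma to $(g_e)_e$ with parameter $\varepsilon/2$: provided $\gamma$, hence $\gamma+O(\rho^{c})$, lies below the threshold it supplies for $r,k,\varepsilon/2$, it removes at most $\varepsilon/2$ of the weighted mass from each edge class and destroys all copies of $K^{(r)}_k$. Pulling the removed parts back through the densification and intersecting with the original $E_e$ defines $E_e'\subset E_e$, and one last use of the counting lemma gives $\bigcap_e E_e'\times V_{J\setminus e}=\varnothing$ together with $\bbE(\nu_e\ichi_{E_e\setminus E_e'}\emid V_e)\le\varepsilon/2+O_{r,k}(\rho^{c})\le\varepsilon$ once $\rho$ is small. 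Choosing $\gamma=\gamma(r,k,\varepsilon)$ and $\rho=\rho(r,k,\varepsilon)$ to meet the finitely many smallness conditions accumulated above finishes the argument. The step I expect to be the main obstacle is the densification loop: after densifying some edges one must still have a counting lemma for the hypergraph with \emph{mixed} weights---dense $g_{e^{(j)}}$ for $j<\ell$, pseudorandom $\nu_{e^{(j)}}$ for $j>\ell$---so that the next densification step is legitimate, and it is precisely this induction that forces one to assume \eqref{eq:rho-pseudorandom} for all sign patterns over all of $J$ rather than merely over single $r$-faces. As usual, the resulting dependence of $\gamma,\rho$ on $\varepsilon,r,k$ is of tower type, inherited from the dense removal lemma via hypergraph regularity.
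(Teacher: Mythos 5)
The paper does not reprove this statement at all: it is imported as a deep black box from Conlon--Fox--Zhao (their Theorem 2.12), with \cite{KMMSY} handling the minor reformulation, so your proposal is in effect an attempt to reconstruct the CFZ argument itself --- and as written it has a genuine gap at its very first step. The ``relative counting lemma'' you assert, namely that for \emph{arbitrary} $g_e\colon \prod_{i\in e}V_i\to[0,1]$ one has $\mathbb{E}\bigl(\prod_{e\in T}(\nu_e g_e)\prod_{e\notin T}g_e\bigr)=\mathbb{E}\bigl(\prod_{e}g_e\bigr)+O_{r,k}(\rho^{c})$, is false. Take $\nu_e=p^{-1}\mathbf{1}_{S_e}$ with $S_e$ a sparse random (hence linear-forms pseudorandom) set of density $p$, let $g_{e_0}=\mathbf{1}_{E_{e_0}}$ with $E_{e_0}$ the complement of $S_{e_0}$, and $g_e\equiv 1$ otherwise: the left-hand side vanishes while the right-hand side is about $1-p\approx 1$, and nothing in Definition \ref{def:rho-pseudorandom} excludes this. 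The actual CFZ counting lemma has a different shape: it compares the $\nu$-weighted count of functions $g_e\le\nu_e$ with the count of bounded models $\tilde g_e$ \emph{only} under the extra hypothesis that each pair $(g_e,\tilde g_e)$ is a discrepancy pair (cut-norm closeness), and proving that statement --- not deleting the weights one edge at a time by Cauchy--Schwarz --- is where the real work, including the densification device, is located.

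Two further steps are also unjustified as written. The pointwise bound $g_{e^{(\ell)}}\le 1+O_{r,k}(\rho^{c})$ for your densified ratio cannot be deduced from any averaged counting statement: conditional simplex counts through individual edges can be huge, and CFZ must truncate the densified function and control the discarded part by a second-moment Cauchy--Schwarz argument using the linear forms condition --- this is not a ``harmless renormalization.'' Likewise the exact conclusion \eqref{eq:no-triangle} (no surviving simplex whatsoever) cannot come from ``one last use of the counting lemma,'' which only controls counts up to $O(\rho^{c})$; one must arrange the removal so that any simplex of the sets $E_e'$ would literally be a simplex of the retained dense-model sets, which is how the exact conclusion of the dense removal lemma is transferred. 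If your goal is to match the paper, the honest route is the one it takes: cite \cite{CFZ} (and \cite{KMMSY} for the translation of the statement) rather than re-derive the theorem.
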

\begin{proof}
    Conlon-Fox-Zhao \cite[Theorem 2.12]{CFZ} state this in a slightly different way but their proof actually shows our current statement.
    See \cite[Theorem 5.10]{KMMSY} for detail.
\end{proof}

Recall that we write $\spDed =\Spec \Ded $ and let $\kurve $ be the complete non-singular curve containing $\spDed $ as an open subscheme.
Also let us recall some integer quantities:
    \newcommand{\integerparameters}{
        n&=\mgn{\complm}, \\
        d_0&=\lcm \br{ \deg (v)\mid v\in\complm  }, \\
        d&=v(t) \quad (\te{independent of }v\in \complm ), \\
        \therank &=nd =\rank _{\ded }(\Ded ), \\
        g&=\te{ the genus of }\kurve
    }
\begin{equation}\label{eq:some-quantities}
    \begin{array}{cl}
        \integerparameters
    .\end{array}
\end{equation}

\begin{proof}[Proof of Theorem \ref{thm:Sz}]
    Let $S\subset \ideala $ and $\delta >0$ be as in the statement.
    Recall $k= \mgn{ S}$.
    Consider the $\ded $-linear map $\phi _S \colon \ded ^{k-1}\surj \ideala $ in Definition \ref{def:S-N-rho-o-pseudorandom}.
    By Corollary \ref{cor:equivalence-norms-inverse-image}, there is a constant $U>0$ such that for every $N\ge 1$
    and $\alpha \in \ideala _{\le N}$, the set $\phi _S\inv (\alpha )\cap \ded _{\le UN}^{k-1}$ contains at least $(N^{1/d })^{k-1-\therank  }$ elements.
    Using Theorem \ref{thm:hypergraph}, we set positive numbers $\varepsilon $, $\gamma $ and $\rho >0$ as:
    \begin{align}\label{eq:def-of-epsilon}
        \varepsilon &:= \delta / (\Nrm (\ideala )k q^{nd_0+g+k-2}U^{(k-1)/d} )  ,
        \\
        \gamma &:= \gamma (k-1,k , \varepsilon ),
        \\
        \rho &:= \rho (k-1,k , \varepsilon )
    ,\end{align}
    whose motivation will only be clear later. 

    Now suppose we are given an $(S,N_0,\rho ,\ded )$-pseudorandom function $\lambda \colon \ideala \to \bbR _{\ge 0}$ with the above $\rho $ and some $N_0\ge 0$.
    Also let $N\ge N_0$ and suppose the subset $A\subset \Ded _{\le N}$ satisfies \eqref{eq:Sz-positive-density} and \eqref{eq:Sz-smallness}.
    Out of these data, we construct a $(k-1)$-uniform weighted hypergraph $( (V_i)_{1\le i\le k},\nu _j\colon \prod _{i\neq j} V_i \to \bbR _{\ge 0} )$
    as follows. The vertex sets are:
    \begin{align}
        V_i &:= \{ \te{hyperplanes }H\subset \ded ^{k-1} \mid
        H \te{ is defined by }x_i = h \te{ with }h \in \ded _{\le UN} \}
         \quad\te{ if }1\le i\le k-1 , \\[2mm]
        V_{k} &:=
        \{ \te{hyperplanes }H\subset \ded ^{k-1} \mid
        H \te{ is defined by } \sum\limits _{i=1}^{k-1} x_i =h
        \te{ with }h\in \ded _{\le UN} \} .
    \end{align}

    To define the weight functions,
    note that for any index $1\le j\le k $ and tuple
    $(H_i)_{i} \in \prod\limits _{i\in \br{1,\dots ,\hat j,\dots ,k} } V_i$,
    the intersection $\bigcap _{i\neq j} H_i$
    consists of exactly one point of $\ded ^{k-1}$.
    Let
    \[ T\colon \bigsqcup _{j=1}^k \paren{\prod _{i\neq j} V_i} \to \ded ^{k-1} \]
    be the map sending a given tuple to this point.
    Its restriction to the $j$th summand shall be denoted by $T_j$ if we need to emphasize the domain of definition.
    By abuse of notation, also denote by $T_j$ the composite map $\prod _{i=1}^k V_i \xrightarrow{\pr _j} \prod _{i\neq j} V_i\xrightarrow{T_j}\ded ^{k-1}$
    where $\pr _j$ is the projection dropping the $j$th entry.
    We define the weight functions for $1\le j\le k $:
    \begin{equation}
        \nu _j \colon
        \prod _{i\neq j} V_i \xrightarrow{T}
        \ded ^{k-1}
        \xrightarrow{ \phi _S}
        \ideala
        \xrightarrow\lambda
        \bbR _{\ge 0}.
    \end{equation}
    We can specify tuples $ (H_i)_{1\le i\le k} $ of hyperplanes by tuples $(h_i)_{1\le i\le k}\in (\ded _{\le UN} )^{k} $ of scalars appearing in their defining equations.
    This gives us the left-hand vertical map in the following commutative diagram,
    where the map $\psi _{S,j}$ was defined in Definition \ref{def:S-N-rho-o-pseudorandom}:
    \begin{equation}
        \xymatrix@C=50pt{
            \prod\limits _{i\in\br{1,\dots ,\hat j, \dots ,k} } V_i \ar[r]^{T}\ar@{^{(}->}[d] & \ded ^{k-1}\ar[d]^{\phi _S} \\
            \bigoplus\limits _{i\in\br{1,\dots ,\hat j, \dots ,k} }\ded \ar[r]_{\psi _{S,j}} & \ideala
        & \te{ for all }j .}
    \end{equation}
    It follows that the estimate \eqref{eq:S-N-rho-ded-pseudorandom} implies the estimate \eqref{eq:rho-pseudorandom} for the weighted hypergraph at hand.
    Thus it is $\rho $-pseudorandom.

    Define a subset $E_j\subset \prod _{i\neq j} V_i$ for each $j$ by:
    \begin{align}
        E_j &:= \{ (H_i)_{i\neq j} \mid  \bigcap _{i\neq j} H_i \subset \phi _S\inv (A) \}
        \\ &= T _j\inv \phi _S\inv (A) ,
    \end{align}
    and set $\wti E _j := E_j \times V_j \subset \prod _{i=1}^{k} V_i $.
    The significance of these sets is as follows:
    given an element $\bm H = (H_i)_i\in \bigcap _{i=1}^{k} \wti E _i $,
    the subset $\{ \phi _S T_i (\bm H ) \} _{i=1}^{k} \subset \ideala $ is an $\ded $-homothetic copy of $S$,
    nontrivial if and only if $\bigcap _{i=1}^{k} H_i =\varnothing $.

    Now, specifying a tuple $(h_i)_{1\le i\le k}\in (\ded _{\le UN})^k$ is equivalent to specifying its first $(k-1)$ entries $\bsb h = (h_i)_{1\le i\le k-1} \in (\ded _{\le UN})^{k-1}$ and a scalar $a=h_{k} - \sum _{i=1}^{k-1} h_i \in \ded _{\le UN}$.
    For $1\le i\le k-1$, let
    \begin{equation}
        \kihon _i = (0,\dots ,0,1,0,\dots ,0) \in \ded ^{k-1}
    \end{equation}
    be the $i$-th standard vector and
    let $\kihon _{k} =0 $ be the zero vector.
    We claim the following inequality:
    \begin{equation}\label{eq:we-claim-the-following-inequality}
        \bbE \paren{\prod _{i=1}^{k} (\lambda\ichi _A )(\phi _S ( \bsb h +  a \bsb e_i   )) \emidd (\bsb h,a )\in (\ded _{\le UN})^{k-1}\times (\ded _{\le UN}\nonzero )  }
        > \gamma .
    \end{equation}
    Toward contradiction, suppose otherwise.
    The assumption \eqref{eq:Sz-smallness} is equivalently formulated as
    \begin{equation}
        \bbE \paren{\prod _{i=1}^{k} (\lambda\ichi _A )(\phi _S ( \bsb h +  a \bsb e_i   )) \emidd (\bsb h,a )\in (\ded _{\le UN})^{k-1}\times \{ 0 \}  }
        \le \gamma
    .\end{equation}
It follows that the expectation computed on $(\ded _{\le UN})^{k-1}\times \ded _{\le UN}$ is also $\le \gamma $.
By the definitions of $\nu _j$ and $E_j$ as pullbacks, we know that the following commutes:
\begin{equation}
    \xymatrix@C=50pt{
    \prod\limits _{i\neq j}V_i \ar[r]^{\phi _ST}\ar[dr]_{\nu _j\ichi _{E_j}} &\ideala \ar[d]^{\lambda \cdot \ichi _A}
    \\
    &\bbR _{\ge 0}.
    }
\end{equation}
Also recalling the definition of $T$ we find that this last inequality precisely says
that the hypothesis \eqref{eq:hypergraph-thm-inequality} of Theorem \ref{thm:hypergraph} is satisfied for our situation.
Therefore there is a family of subsets $E_i'\subset E_i $ as in Theorem \ref{thm:hypergraph}.

We claim that the existence of such $E_i'$ leads to the negation of \eqref{eq:Sz-positive-density}.
Define a map
\begin{align}
    \iota _0 \colon &\phi _S\inv (A) \cap \ded _{\le UN}^{k-1}\to \prod _{i=1}^k V_i \quad\te{ by }
    \\
    &\bsb a \mapsto (\te{the hyperplane }\in V_i\te{ passing through }\bsb a )_i.
\end{align}
We have an equality of maps
$T_j \circ \iota _0 =\id $
from $\phi _S\inv (A)\cap \ded _{\le UN}^{k-1}$ to itself for all $j$.
It follows $\iota _0 $ maps into $\bigcap _{j=1}^k \wti{ E}_j$ (recall $\wti E_j:=E_j\times V_j \subset \prod _{i=1}^k V_i $).
Endow this set with the following filtraion;
for the sake of space, we write
also $\wti E_i':= E_i'\times V_i$:
\begin{equation}
    \bigcap _{i=1}^k \wti E_i
    \supset
    \dots
    \supset
    \quad
    \paren{\bigcap _{i=1}^l \wti E_i} \cap \paren{\bigcap _{i=l+1}^k \wti E_i'}
    \quad
    \supset
    \dots
    \supset
    \bigcap _{i=1}^k \wti E_i' =\varnothing .
\end{equation}
Hence $\bigcap _i \wti E_i$ is the disjoint sum of the successive complements so we can define
a map $\pr  \colon \bigcap _{i=1}^k \wti E_i \to \bigsqcup _{l=1}^k (E_l \setminus E_l')$ by the condition:
\begin{align}
    \te{the restriction }\pr \colon
    \paren{\bigcap _{i=1}^{l-1} \wti E_i} \cap \Bigpa{\wti E_l \setminus \wti E_l' }
    \cap\paren{\bigcap _{i=l+1}^k \wti E_i'}
    \to E_l\setminus E_l'
    \te{ is the projection }\pr _l.
\end{align}
Then define a map $\iota \colon \phi _S\inv (A)\cap \ded _{\le UN}^{k-1} \to \bigsqcup _{i=1}^k (E_i\setminus E_i')$ by $\pr \circ \iota _0$.
We see that $T\circ \iota =\id $ and in particular $\iota $ is injective.
So far we have obtained the following commutative diagram:
\begin{equation}
    \xymatrix{
     &\phi_S\inv (A)\cap \ded _{\le UN}^{k-1}\ar[dl]_{\phi _S}\ar@{_{(}->}[d]_{\iota }
    \\
        A \ar[d]_{\lambda}  & \bigsqcup\limits _{i=1}^k  E_i \setminus  E_i' \ar[l]_{\phi_S T}\ar[dl]^{\bigsqcup _i \nu _i}
    \\
    \bbR _{\ge 0}
        }
,\end{equation}
where we know the fibers of the map $\phi _S$ have cardinality $> (N^{1/d })^{k-1-\therank }$ by the choice of $U>0$.
It follows that
\begin{equation}
    \sum _{\begin{subarray}{c}
        \bm H\in \bigsqcup _i \paren{E_i\setminus E_i'}
    \end{subarray}}
    \nu _i (\bm H)
    \ge
    \sum _{\bsb a\in \phi_S\inv (A)\cap \ded_{\le UN}^{k-1}}
    \lambda (\phi_S(\bsb a))
    >
    (N^{1/d })^{k-1-\therank }\cdot \sum _{\alpha \in A }  \lambda (\alpha ) .
\end{equation}
By \eqref{eq:few-edges-removed} we know that the left hand side is bounded by
\begin{equation}
    \le \sum _{i=1}^k \varepsilon \cdot \mgn{ \ded _{\le UN}}^{k-1} \le k\varepsilon \cdot  (q(UN)^{1/d })^{k-1}.
\end{equation}
By these inequalities and the fact \eqref{eq:size-of-ideala-N} (or \eqref{eq:size-of-Ded-N}) that $\mgn{\ideala _{\le N}} \ge N^{n}/(\Nrm (\ideala )q^{nd_0+g-1})$ for $N\ge \Nrm (\ideala ) q^{2g-1}$ we get:
\begin{equation}
    \bbE ( \ichi _A \lambda \emid \ideala _{\le N})
    < \Nrm (\ideala ) k q^{nd_0+g+k-2} (U^{1/d })^{k-1}\cdot \varepsilon \overset{\eqref{eq:def-of-epsilon} }{=} \delta ,
\end{equation}
contradicting \eqref{eq:Sz-positive-density}.
This shows the claimed inequality \eqref{eq:we-claim-the-following-inequality}.

To conclude, let us deduce \eqref{eq:Sz-conclusion} from \eqref{eq:we-claim-the-following-inequality}.
First, if the elements $a\in \ded $ and $\beta \in \ideala _{\le N} $ satisfy $as+\beta \in A \subset \ideala _{\le N}$ for an $s\in S\nonzero $,
it is necessarily true that $as\in \ideala _{\le N}$ because $\ideala _{\le N}$ is a subgroup.
Since $\lnorm s \ge 1$, we necessarily have $\lnorm a \le N$.
It follows that the terms with $\lnorm a > N$ do not contribute to the expectation \eqref{eq:we-claim-the-following-inequality} so that we obtain
\begin{equation}
    \bbE \paren{\prod _{i=1}^k (\lambda \ichi _{A})(\phi _S (\bsb h +a\bsb e_i ))
    \emidd (\bsb h ,a)\in \ded _{\le UN}^{k-1} \times (\ded _{\le N}\nonzero ) }
    >\gamma .
\end{equation}
Note that the fibers of
$\phi _S \colon \ded _{\le UN}^{k-1} \to \ideala $ have cardinality $\le \mgn{\ded _{\le UN}} ^{k-1-\therank }$
and so the same is true for the vertical map in the next commutative diagram:
\begin{equation}
    \xymatrix@C=4cm{
        \ded _{\le UN}^{k-1} \times \ded _{\le N}
        \ar[d]_{\phi_S\times \id  }
        \ar[rd]^{\hspace{2cm}(\bsb h,a)\mapsto \prod _{i=1}^k (\lambda\ichi _A )(\phi_S(a\bsb e_i + \bsb h )) }
        \\
        \ideala \times \ded _{\le N} \ar[r]_{(\beta ,a)\mapsto \prod _{s\in S} (\lambda \ichi _A)(as+\beta )}
        & \bbR _{\ge 0}.
    }
\end{equation}
It follows that
\begin{equation}
    \mgn{\ded _{\le UN}}^{k-1-\therank }
    \sum _{\begin{subarray}{c}
        \beta \in \ideala _{\le N}
        \\
        a\in \ded _{\le N}
    \end{subarray}}
    \prod _{s\in S}
    (\lambda \ichi _A)(\beta +as)
    \ge
    \sum _{\begin{subarray}{c}
        \bsb h \in \ded _{\le UN}^{k-1}
        \\
        a\in \ded _{\le N}
    \end{subarray}}
    \prod _{s\in S}
    (\lambda \ichi _A)(\phi _S(\bsb h) +as)
.\end{equation}
If we divide both sides by $\mgn{\ded _{\le UN}}^{k-1}\cdot \mgn{ \ded _{\le N}}$ this precisely says:
\begin{equation}
    \frac{\mgn{\ideala _{\le N}} }{\mgn{\ded _{\le UN} }^\therank  }\cdot
    \te{L.H.S. of \eqref{eq:Sz-conclusion}}
        \ge
    \te{L.H.S. of \eqref{eq:we-claim-the-following-inequality}}
    > \gamma .
\end{equation}
Note by \eqref{eq:size-of-ideala-N} that the first factor on the left hand side is smaller than $1$ at least if $g\ge 1$ or $U\ge q$.
It follows that the asserted inequality \eqref{eq:Sz-conclusion} holds.
This completes the proof of Theorem \ref{thm:Sz}.
\end{proof}

\section{The von Mangoldt function}\label{sec:GY}

The definitions and results are parallel to those in \cite[\S 6]{KMMSY}.

\begin{definition}\label{def:von-Mang-function}
    Let $\chi \colon \bbR \to [0,1]$ be a $C^\infty $ function with support $\subset [-1,1]$
    and $\chi (0)=1$, which we fix throughout the paper.
    Let $R>0$ be a positive number. In the proof of the main result we will need to take $R$ very large.
    Let $\Ideals $ be the multiplicative monoid of non-zero ideals of $\Ded $.
    We define the function $\Mang \colon \Ideals  \to \bbR $ by
    \begin{equation}
        \Mang (\idealb ) := \log R \sum _{\begin{subarray}{c} \idealc\in\Ideals  \\ \te{with }\idealc |\idealb \end{subarray}}
        \mu (\idealc ) \chi \left( \frac{\log \Nrm (\idealc )}{\log R } \right) .
    \end{equation}
\end{definition}

Given a non-zero ideal $\ideala $ of $\Ded $, we define the function
$\Manga \colon \ideala \to \bbR $ by the composition:
\begin{equation*}
    \Manga  \colon \ideala \xrightarrow{\alpha \mapsto \alpha \ideala ^{-1}} \Ideals
    \xrightarrow{\Mang} \bbR .
\end{equation*}
Note that the membership $\alpha \in \ideala $ implies that $\alpha \ideala ^{-1}$ is a (non-zero)
ideal of $\Ded $ so that the above composition is well defined.

Below we use the notation $\Nrm (-)$ also for non-zero ideals of $\ded $.
This does not cause confusion because a non-zero ideal of $\ded $ is never an ideal of $\Ded $
    and vice versa.
Of course every non-zero ideal of $\ded =\Fq [t] $ is a principal ideal $f\ded $
and if $f$ has degree $e$ as a polynomial then we have $\Nrm (f\ded )=q^e $ ($=\lnorm f _{\ded }$).
For $N>0$ we have $\ded _{\le N^d} = \br{ f\in \ded \mid \Nrm (f\ded ) \le N }$ by Proposition \ref{prop:choice-of-subring}.
    For non-zero ideals $\idealb \subset \Ded $, by the canonical injection $\ded /\ded \cap \idealb \inj \Ded /\idealb $, we know $\Nrm (\ded \cap \idealb )\le \Nrm (\idealb )$.

    For $\idealb \in \Ideals $, define $\totient (\idealb ):= \mgn{ (\Ded /\idealb )^* }$.
    By Chinese Remainder Theorem we know $\totient (\idealb )= \Nrm (\idealb )\prod _{\idealp |\idealb }\paren{ 1- \frac{1}{\Nrm (\idealp )} } $
    where $\idealp $ runs through the prime ideal divisors of $\idealb $.
    For elements $\alpha \in \Ded \nonzero $ let us write $\totient (\alpha ):= \totient (\alpha \Ded )$.

\begin{theorem}[of Goldston-Y{\i}ld{\i}r{\i}m type]\label{thm:Goldston-Yildirim}
    Let $\ideala $ be a non-zero ideal of $\Ded $.
    Let $m,t\ge 0$ be non-negative integers.
    Let
    \[ \phi _1,\dots ,\phi _m \colon \ded ^t \to \ideala \]
    be $\ded $-linear maps
    whose cokernels are finite and such that $\ker (\phi _i)$ does not contain $\ker (\phi _j)$ whenever $i\neq j$.

    Then there are large positive numbers $R_0> 1$, $w_0> 1$ and a small one $0< f_0< 1$ 
    such that
    for every choice of the quantities below:
    \begin{itemize}
        \item real numbers $R\ge R_0$ and $w\ge w_0$ such that $\frac{\log_q w}{\sqrt {\log R}} < f_0$,
        \item an element $W\in \ded $ whose prime factors are exactly
        $\{ \pi \text{ prime element }\mid \Nrm (\pi\ded ) \le w \} $,
        \item a translate $B\subset \ded ^t $ of a product $\ded _{\le N_1}\times \dots \times \ded _{\le N_t}$ with $N_i\ge R^{2m}/q$ for all $1\le i\le t$,
        \item $b_1,\dots ,b_m\in \ideala \setminus \bigcup _{\idealp |W} \idealp \ideala $,
    \end{itemize}
    we have an estimate
    \begin{multline}\label{eq:GY-statement}
        \Ex \left( \prod _{i=1}^m \Manga (W\phi _i (x)+b_i)^2  \emidd
        x\in B
        \right)
        \\
        = (1+O_{\chi ,m,\therank }\paren{\frac{1}{w \log_q w} }+O_{\chi ,m,\therank ,\Ded }\paren{\frac{\log_q w}{\sqrt{\log R}} } ) \paren{\log R\cdot \frac{\Nrm (W\Ded ) }{\totient (W)}\frac{C_{\chi }}{\kappa _\Ded } } ^m
        ,\end{multline}
        where $C_\chi $ and $\kappa _{\Ded }$ are positive constants associated with $\chi $ and $\Ded $ which are defined in \eqref{eq:def-of-C-chi} and \eqref{eq:pole-of-zeta}. 
    \end{theorem}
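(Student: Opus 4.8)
The plan is to follow the Goldston--Y\i ld\i r\i m circle of ideas as executed in \cite[\S 6]{KMMSY}, adapting every step to the function-field language. The starting point is to expand the square $\Manga(W\phi_i(x)+b_i)^2$ using the definition of $\Mang$ as a sum over ideal divisors: writing $\alpha_i := W\phi_i(x)+b_i$, each factor becomes $(\log R)^2$ times a double sum over pairs of ideals $(\idealc_i,\idealc_i')$ dividing $\alpha_i\ideala^{-1}$, weighted by $\mu(\idealc_i)\mu(\idealc_i')\chi(\tfrac{\log\Nrm\idealc_i}{\log R})\chi(\tfrac{\log\Nrm\idealc_i'}{\log R})$. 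Taking the product over $i$ and then the expectation over $x\in B$, I would interchange the order of summation so that the outer sum runs over tuples of ideals $(\idealc_i,\idealc_i')_{i=1}^m$ and the inner quantity is the density of those $x\in B$ for which $\idealc_i\idealc_i' \mid \alpha_i\ideala^{-1}$ for all $i$; by the support of $\chi$ only ideals of norm $\le R$ survive, and by M\"obius only squarefree ones. The key local computation is then: for a fixed prime ideal $\idealp$ of $\Ded$ not dividing $W$, the proportion of $x\in B$ with $\idealp\mid \alpha_i\ideala^{-1}$ for $i$ in a prescribed subset $I$ of $\{1,\dots,m\}$ equals, up to a controlled error, $\Nrm(\idealp)^{-1}$ if $|I|=1$ and $\Nrm(\idealp)^{-|I|}$-type behaviour governed by the hypothesis $\ker\phi_i\not\supset\ker\phi_j$ for $i\ne j$ --- this last condition is exactly what guarantees that for $|I|\ge 2$ the relevant linear conditions on $x$ are independent modulo $\idealp$ for all but finitely many $\idealp$. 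Here one uses Corollary \ref{cor:equivalence-norms-inverse-image} and the equidistribution of $\ded$-linear images in residue rings to pass from ``proportion of $x$ in the box $B$'' to ``proportion in $\ded^t/(\text{modulus})^t$'' with error $O(\text{modulus}/N_i)$, which is acceptable because $N_i\ge R^{2m}/q$ dominates the moduli $\le R^{2m}$.

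Once the local densities are in hand, the expectation factors (approximately) as an Euler product over prime ideals $\idealp\nmid W$, and the sum over the ideals $\idealc_i,\idealc_i'$ becomes a multidimensional Mellin-type sum. I would then invoke the standard Goldston--Y\i ld\i r\i m contour-integration lemma (the analytic heart, borrowed verbatim from \cite[\S 6]{KMMSY} or \cite{Green-Tao08}): after writing each $\chi$ via its Fourier/Mellin transform, the main term is extracted from the pole structure of the Dedekind zeta function $\zeta_\Ded$ at $s=1$. This is precisely where the constant $\kappa_\Ded$ (the residue of $\zeta_\Ded$ at $1$, fixed in \eqref{eq:pole-of-zeta}) enters, together with $C_\chi$ from \eqref{eq:def-of-C-chi} which packages the archimedean integral $\int |\hat\chi|^2$ or its analogue; the factor $\Nrm(W\Ded)/\totient(W)$ arises from restoring the finitely many Euler factors at $\idealp\mid W$ that were omitted by the $W$-trick, via the formula $\totient(W)=\Nrm(W\Ded)\prod_{\idealp\mid W}(1-\Nrm(\idealp)^{-1})$ recalled in the text. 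Each of the $m$ nearly-identical factors contributes $\log R\cdot\frac{\Nrm(W\Ded)}{\totient(W)}\cdot\frac{C_\chi}{\kappa_\Ded}$, giving the $m$-th power in \eqref{eq:GY-statement}.

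The two error terms in \eqref{eq:GY-statement} have distinct origins, and tracking them carefully is the most delicate bookkeeping. The term $O_{\chi,m,\therank}(\tfrac{1}{w\log_q w})$ comes from the ``bad'' prime ideals --- those dividing $W$, plus the finitely many (depending on $\Ded$ and the $\phi_i$) where the independence of the linear conditions fails --- whose omitted or corrupted Euler factors deviate from $1$ by $O(\Nrm(\idealp)^{-1})$; summing $\Nrm(\idealp)^{-1}$ over $\Nrm(\idealp)\le w$ is $\ll \log_q w$ but the $W$-trick is designed so that the \emph{relative} error after dividing by the main term is $O(\tfrac{1}{w\log_q w})$, using that a prime ideal of norm $\le w$ forces its underlying rational prime of $\ded$ to have norm $\le w$ as well. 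The term $O_{\chi,m,\therank,\Ded}(\tfrac{\log_q w}{\sqrt{\log R}})$ is the truncation/contour-shift error in the analytic lemma, exactly as in Green--Tao, and the hypothesis $\tfrac{\log_q w}{\sqrt{\log R}}<f_0$ keeps it small. The main obstacle I anticipate is \emph{not} the analytic lemma (which is essentially a black box once phrased in terms of $\zeta_\Ded$) but rather verifying the local-density computation uniformly in $W$ and $b_i$: one must show that the conditions $\idealp\mid W\phi_i(x)+b_i$ for varying $i$ remain ``as independent as the $\ker\phi_i$ allow'' modulo arbitrary squarefree moduli coprime to $W$, and that the boxes $B$ are large enough to see this equidistribution with negligible error. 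This requires combining the Chinese Remainder Theorem over $\Ded$, the finiteness of $\mathrm{coker}\,\phi_i$, and the norm-equivalence results of \S\ref{sec:algebraic} to control the ``conductor'' of the family uniformly; the ultrametric nature of $\lnorm-$ (so that $\ded_{\le N}$ are subgroups) is what makes the box-to-residue-ring comparison clean compared with the number-field case.
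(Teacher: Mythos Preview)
Your overall architecture matches the paper's: expand the square, sum over divisor pairs $(\idealb_i,\idealc_i)$, reduce the box average to a residue-class count, factor into local $\pi$-parts via the Chinese Remainder Theorem, Fourier-transform $\chi$, assemble the Euler product, and read off the main term from the simple pole of $\zeta_\Ded$ at $s=1$. Two points in your error analysis are off, however, and would derail an actual execution.

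First, the box-to-residue comparison carries \emph{no} error term of the shape $O(\text{modulus}/N_i)$. Because $\ded_{\le N_i}$ is a subgroup of $\ded$, its image in $\ded/I$ is all of $\ded/I$ as soon as $N_i^{1/d}\ge \Nrm(I)/q$, and then every fiber has the same size; the expectation over $B$ therefore \emph{equals} the expectation over $(\ded/I)^t$. You acknowledge the ultrametric feature at the end but still speak earlier of an equidistribution error to be controlled; in the paper this step is an exact identity (the lemma producing \eqref{eq:expectation-ichi}), and Corollary~\ref{cor:equivalence-norms-inverse-image} plays no role here.

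Second, and more substantively, your attribution of the $O_{\chi,m,\therank}(1/(w\log_q w))$ term is backwards. Primes $\pi$ with $\Nrm(\pi\ded)\le w$ contribute \emph{no} error at all: the $W$-trick together with the hypothesis $b_i\notin\idealp\ideala$ for $\idealp\mid W$ forces $W\phi_i(x)+b_i\equiv b_i\not\equiv 0\pmod{\idealp}$, so the local factor $\sumXiEtaPi$ at every such $\pi$ is exactly $1$ (Lemma~\ref{lem:pi-local-computation}\eqref{item:smaller-than-w}). The $1/(w\log_q w)$ comes instead from primes with $\Nrm(\pi\ded)>w$: there the local factor is $1+(\text{degree-one terms yielding the }\zeta_\Ded\text{ factors})+O_\therank(\Nrm(\pi\ded)^{-2})$, and the product of the quadratic corrections over $\Nrm\pi>w$ is $\exp\bigl(O_\therank(\sum_{\Nrm\pi>w}\Nrm\pi^{-2})\bigr)=1+O_\therank(1/(w\log_q w))$ by \eqref{eq:large-pi}. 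The finitely many ``bad'' primes where $\coker\phi_i$ or kernel-independence fails do not contribute error either; their only role is to fix the threshold $w_0$ large enough that they all have $\Nrm\pi\le w$ and are absorbed by the $W$-trick. Likewise, the $\log_q w/\sqrt{\log R}$ term is not purely a contour-truncation artifact: a factor of $\log_q w$ genuinely appears when one compares $\prod_{\idealp:\Nrm\pi\le w}(1-\Nrm(\idealp)^{-1+\varepsilon})$ to $\totient(W)/\Nrm(W\Ded)$ via \eqref{eq:small-pi}, with $|\varepsilon|\le 1/\sqrt{\log R}$.
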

    That the error terms can be taken entirely independent of the choice of $B$, $b_1,\dots ,b_m$
    is part of the statement of Theorem \ref{thm:Goldston-Yildirim}.
    Also, the ratio $\Nrm (W\Ded )/\totient (W) = \prod _{\idealp | W} \paren{1-\frac 1 {\Nrm (\idealp )}}$ depends only on the real number $w$ and not on the specific $W$.

    The proof of this theorem occupies the rest of this section.

    Toward the proof of Theorem \ref{thm:Goldston-Yildirim}, first we unfold the relevant definitions to get
\begin{multline}\label{eq:unfolded-expression}
    \text{L.H.S. of \eqref{eq:GY-statement}}
    =
    \\
    \sum _{(\idealb _i ,\idealc _i )_i \in \Ideals ^{2m}} (\log R)^{2m}
    \mu (\idealb _i )\mu (\idealc _i ) \chi \paren{\frac{\log \Nrm (\idealb _i )}{\log R}}
    \chi \paren{\frac{\log \Nrm (\idealc _i )}{\log R} }
    \E \paren{\prod _{i=1}^m \ichi _{\ideala \cdot (\idealb _i\cap \idealc _i)}(W\phi _i(x)+b_i ) \, \middle| \, x\in B }
.\end{multline}
Note that only those terms with $\Nrm (\idealb _i )\le \log R$
and $\Nrm (\idealc _i )\le \log R$ for all $i$ contribute to the sum
because $\Supp \chi \subset [-1,1]$.
Define an $\ded $-linear map $\bar\phi $ by
\begin{equation}\label{eq:def-of-bar-phi}
    \bar\phi \colon \ded ^t \to \bigoplus _i \ideala /\ideala \cdot(\idealb _i \cap \idealc _i) ;\quad
    x \mapsto \bigpa{\phi _i (x)\mod \ideala \cdot (\idealb _i\cap \idealc _i) } _i .
\end{equation}
For a given tuple of ideals $\idealtuple $, let $I\subset \ded $ be the ideal $I=\ded \cap \left( \bigcap _i \idealb _i\cap \idealc _i\right) $. The map $\bar\phi $ factors through $(\ded /I)^t$.
Also we write $ b=(b_i)_i \in \bigoplus _{i=1}^m \ideala $ and $\bar b$ for its residue class in $\bigoplus _i \ideala /\ideala \cdot (\idealb _i\cap \idealc _i)$.
Then for $x\in \ded ^t$, the condition that
\begin{equation}
    F(x):=\prod _i \ichi _{\ideala\cdot (\idealb _i\cap \idealc _i)}(W\phi _i (x)+b_i)=1
\end{equation}
is equivalent to the equality $ W\bar\phi (x)+\bar b =0 $,
namely the equality $\bar F(x):=\ichi _{\br 0} (W\bar\phi (x)+\bar b) =1$.
It follows that we have a commutative diagram:
\begin{equation}\label{eq:comm-diagram-F-F_bar}
    \xymatrix{
        \ded ^t \ar[r]\ar@(ur,ul)[rrr]^{F} & (\ded /I)^t \ar[rr]_{\bar F } & & \br{0,1}
    }
.\end{equation}
The next assetion paves the way for the computation of the $\bbE (-)$ term in \eqref{eq:unfolded-expression}.

\begin{lemma}
    Let $\idealtuple \in \Ideals ^{2m}$ be a tuple with $\Nrm (\idealb_i ),\Nrm (\idealc _i) \le \log R$ for all $i$.
    Let $I$, $F$ and $\bar F$ as in \eqref{eq:comm-diagram-F-F_bar}.
    Then we have an equality
    \begin{equation}\label{eq:expectation-ichi}
        \E \paren{ F \emidd B  }
    =
    \E \paren{ \bar F \emid (\ded /I)^t } .
    \end{equation}
\end{lemma}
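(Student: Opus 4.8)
The plan is to reduce both sides to a counting problem on the finite ring $\ded /I$ and observe that the map $B \to (\ded/I)^t$ is "equidistributed" because $B$ is a product of translates of the additive subgroups $\ded_{\le N_i}$ whose sizes are large multiples of $\Nrm(I)$.

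First I would note that, since $F$ factors through the reduction $\ded^t \to (\ded/I)^t$ via $\bar F$ (this is the content of diagram \eqref{eq:comm-diagram-F-F_bar}), the left-hand side $\E(F \emid B)$ is really an average of the pulled-back function $\bar F \circ (\text{reduction})$ over $B$. So the whole lemma amounts to the claim that for \emph{any} function on $(\ded/I)^t$, its average over the image of $B$ (with multiplicity) equals its average over all of $(\ded/I)^t$; equivalently, that the reduction map $B \to (\ded/I)^t$ has all fibers of equal cardinality. Since $B$ is a translate of $\ded_{\le N_1}\times\dots\times\ded_{\le N_t}$, it suffices to treat each factor: I would show that for a single coordinate, the composite $\ded_{\le N} \hookrightarrow \ded \twoheadrightarrow \ded/I$ is surjective with all fibers of the same size, provided $N$ is large enough relative to $\Nrm(I)$.

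The key computation for this is the following. Write $I = g\ded$ with $g$ of degree $e$, so $\Nrm(I) = q^e$. Recall from \eqref{eq:confusing} and the surrounding discussion that $\ded_{\le N}$ is the $\Fq$-space of polynomials of degree $\le \lfloor \tfrac 1d \log_q N\rfloor =: M$, which is a group of size $q^{M+1}$. Euclidean division by $g$ gives a short exact sequence of $\Fq$-vector spaces $0 \to g\ded \cap \ded_{\le N} \to \ded_{\le N} \to \ded/g\ded \to 0$ as long as $M \ge e$, i.e. as long as $N \ge q^{de} = \Nrm(I)^d$; surjectivity holds because every residue class mod $g$ has a representative of degree $< e \le M$, and the kernel is $\{gh : \deg h \le M - e\}$, a group of size $q^{M-e+1}$ \emph{independent of the target coset}. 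Hence each fiber of $\ded_{\le N}\twoheadrightarrow \ded/I$ has exactly $q^{M-e+1} = \mgn{\ded_{\le N}}/\Nrm(I)$ elements. Taking the product over the $t$ coordinates (and translating, which permutes fibers), the reduction map $B \to (\ded/I)^t$ is a surjection all of whose fibers have the same cardinality $\mgn B / \Nrm(I)^t$. Averaging $\bar F \circ (\text{reduction})$ over $B$ therefore gives exactly $\E(\bar F \emid (\ded/I)^t)$, which is the asserted identity \eqref{eq:expectation-ichi}.

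The one point requiring care — the \emph{main obstacle}, though a mild one — is checking that $N$ is indeed large enough for the Euclidean-division argument to apply: one needs $N \ge \Nrm(I)^d$ for each relevant modulus, and since $I = \ded \cap \bigcap_i(\idealb_i \cap \idealc_i)$ one has $\Nrm(I) \le \prod_i \Nrm(\idealb_i)\Nrm(\idealc_i)$, and the hypotheses $\Nrm(\idealb_i),\Nrm(\idealc_i)\le \log R$ together with $N_i \ge R^{2m}/q$ give $\Nrm(I)^d \le (\log R)^{2md} \le R^{2m}/q \le N_i$ once $R$ is large (which is absorbed into the threshold $R_0$ in Theorem \ref{thm:Goldston-Yildirim}). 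With that inequality in hand the rest is bookkeeping, and I would simply record the fiber-size computation and conclude.
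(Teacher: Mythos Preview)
Your proof is correct and follows essentially the same route as the paper: bound $\Nrm(I)$ by $\prod_i \Nrm(\idealb_i)\Nrm(\idealc_i)$, then use the size hypothesis on the $N_i$ to see that each coordinate map $\ded_{\le N_i}\to\ded/I$ is surjective, whence $B\to(\ded/I)^t$ is a translate of a surjective group homomorphism and so has fibers of equal size. The paper's version is terser---it records only the surjectivity and leaves the uniform-fiber conclusion to the underlying group structure---while you spell out the Euclidean-division picture and the threshold check $\Nrm(I)^d\le(\log R)^{2md}\le N_i$ more explicitly.
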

\begin{proof}
    Note that $\Nrm (I) \le \Nrm (\bigcap _i (\idealb _i \cap \idealc _i ))\le \prod _i \Nrm (\idealb _i)\cdot \Nrm (\idealc _i) \le R^{2m}$.
    As an elementary fact in $\ded =\Fq [t]$, we know that for any $l\ge \Nrm (I)/q$ the composite map $\ded _{\le l^d}\inj \ded \surj \ded / I $ is surjective.
    (In other words, if $f\in \Fq [t]$ is a polynomial of degree $e$, the polynomials of degree $\le e-1$ form a set of representatives for the quotient ring $\Fq [t]/(f)$.)
    It follows that the composite homomorphism $B\inj \ded ^t \surj (\ded /I)^t$ is surjective.
    By the commutative diagram \eqref{eq:comm-diagram-F-F_bar} our claim follows.
\end{proof}
Now let us move on to compute the right hand side of \eqref{eq:expectation-ichi}.

\subsection{The $\pi $-parts}
Let $\pi \in \ded $ be a prime element.
Let us call a non-zero ideal $\idealb $ of $\Ded $ a {\em $\pi $-ideal} if the $\ded $-module $\Ded /\idealb $ is annihilated by a power of $\pi $.
By the prime decomposition of ideals of $\Ded $, every $\idealb \in \Ideals $ is uniquely written as a product
\begin{equation}
    \idealb = \prod _\pi \idealb ^{(\pi )}
\end{equation}
where $\pi $ runs through the associate classes of prime elements of $\ded $ and $\idealb \ppart $ is a $\pi $-ideal.
We call $\idealb\ppart $ the {\em $\pi $-part} of $\idealb $.
The {\em $\pi $-part} of a tuple of ideals $\idealtuple =(\idealb _i ,\idealc _i)_{1\le i\le m}\in \Ideals ^{2m}$
shall mean the tuple of the $\pi $-parts of its entries:
$\idealtuple \ppart := (\idealb _i\ppart ,\idealc _i\ppart )_{1\le i\le m}$.

Let us write
\begin{equation}
    \myep
\end{equation}
for the quantity in \eqref{eq:expectation-ichi}.
It depends also on the data of $\phi _i$ but we do not include it in the notation.

\begin{lemma}\label{lem:E-is-multiplicative}
The quantity $\myep $ decomposes into the product of its
$\pi $-parts; namely,
\begin{equation}
    \myep = \prod _{\pi } \pep ,
\end{equation}
where $\pi $ runs through the associate classes of the prime elements of $\ded $.
\end{lemma}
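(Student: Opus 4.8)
The plan is to use the Chinese Remainder Theorem to factor everything in sight over the distinct primes $\pi$ of $\ded$. Fix a tuple $\idealtuple \in \Ideals^{2m}$ with all norms $\le \log R$; only finitely many primes $\pi$ of $\ded$ can divide the ideal $I = \ded \cap \bigcap_i(\idealb_i \cap \idealc_i)$, so all but finitely many of the factors $\pep$ are equal to $1$ (for those $\pi$, $\bar F$ is identically $1$ on $(\ded/I\ppart)^t = 0$ and the expectation is trivially $1$). Thus the product on the right is really a finite product and there is no convergence issue.

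First I would observe that $I = \prod_\pi I\ppart$ where $I\ppart := \ded \cap \bigcap_i(\idealb_i\ppart \cap \idealc_i\ppart)$ is the $\pi$-part of $I$ as an ideal of $\ded$ (this follows from the compatibility of the $\pi$-part decomposition in $\Ded$ with intersection and with contraction along $\ded \subset \Ded$, since distinct $\pi$'s generate comaximal ideals). By CRT in the principal ideal domain $\ded$ we get a ring isomorphism $\ded/I \cong \prod_\pi \ded/I\ppart$, hence a bijection of the vertex sets $(\ded/I)^t \cong \prod_\pi (\ded/I\ppart)^t$. Next I would check that under this identification the function $\bar F$ factors as a product: the condition $W\bar\phi(x) + \bar b = 0$ in $\bigoplus_i \ideala/\ideala\cdot(\idealb_i\cap\idealc_i)$ is equivalent, again by CRT applied to the $\Ded$-module decomposition $\ideala/\ideala\cdot(\idealb_i\cap\idealc_i) \cong \prod_\pi \ideala/\ideala\cdot(\idealb_i\ppart\cap\idealc_i\ppart)$, to the conjunction over all $\pi$ of the corresponding $\pi$-local conditions $W\bar\phi\ppart(x\ppart) + \bar b\ppart = 0$. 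Therefore $\bar F = \prod_\pi (\bar F\ppart \circ \mathrm{pr}_\pi)$ as functions on $\prod_\pi(\ded/I\ppart)^t$, where $\bar F\ppart$ is the analogous characteristic function defined using $\idealtuple\ppart$.

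Finally, the expectation of a product of functions each depending on a separate coordinate, taken over a product of finite sets with the uniform (product) measure, is the product of the expectations:
\[
    \E\paren{\bar F \emid (\ded/I)^t}
    = \E\paren{\prod_\pi (\bar F\ppart \circ \mathrm{pr}_\pi) \emiddd \prod_\pi (\ded/I\ppart)^t}
    = \prod_\pi \E\paren{\bar F\ppart \emid (\ded/I\ppart)^t}
    = \prod_\pi \pep,
\]
using \eqref{eq:expectation-ichi} in reverse at the last step (or rather its tautological $\pi$-local form, since the analog of $B$ for the $\pi$-part is again surjective onto $(\ded/I\ppart)^t$). I expect the only real point requiring care to be the bookkeeping in the second step: namely verifying that the $\pi$-part operation on ideals of $\Ded$ is compatible with the contraction to $\ded$ and with the formation of the quotient modules $\ideala/\ideala\cdot(\idealb_i\cap\idealc_i)$, so that all the CRT decompositions line up simultaneously and identify $\bar F$ with $\prod_\pi \bar F\ppart$ on the nose. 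Everything else is a formal consequence of CRT and the product structure of the uniform measure.
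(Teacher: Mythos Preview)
Your proposal is correct and follows essentially the same approach as the paper: decompose both the source $(\ded/I)^t$ and the target $\bigoplus_i \ideala/\ideala(\idealb_i\cap\idealc_i)$ via the Chinese Remainder Theorem into their $\pi$-parts, observe that $\bar F$ factors accordingly, and then apply a Fubini-type computation for the uniform measure on a product of finite sets. The paper's proof is terser but identical in substance; your additional remarks about finiteness of the product and the compatibility of the $\pi$-part decomposition with contraction to $\ded$ are correct and harmless elaborations.
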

\begin{proof}
    By Chinese Remainder Theorem, the map $\bar\phi \colon (\ded /I)^t \to \bigoplus _i \ideala /\ideala (\idealcap ) $ decomposes to its $\pi $-parts,
    that is: 
    \begin{equation}
        \bar\phi = \prod _{\pi } \bar\phi\ppart \colon
        \prod_\pi (\ded /I\ppart )^t
        \to
        \prod _\pi \bigoplus _i \ideala /\ideala (\idealb_i\ppart \cap \idealc_i\ppart ) ,
    \end{equation}
    where $\bar\phi \ppart $ is the map defined by \eqref{eq:def-of-bar-phi} with $\idealtuple \ppart $
    in place of $\idealtuple $.
    Then in \eqref{eq:expectation-ichi}, the $\br{0,1}$-valued function $\bar F$ decomposes into the product of
    functions $\bar F\ppart \colon \ded /I\ppart \to \br{0,1}$ which are defined exactly as $\bar F$ with $\idealtuple \ppart $ in place of $\idealtuple $.
    By a Fubini type computation our assertion now follows.
\end{proof}

Our next task is to evaluate $\pep $.
The computation is divided to two cases: when $\lnorm \pi _{\ded }$ is small and when it is large.
Let us use Greek letters $\alpha ,\beta ,\gamma $ to denote ideals when they are assumed to be $\pi $-ideals for a fixed prime element $\pi \in \ded $.

\begin{lemma}\label{lem:pi-local-computation}
    Consider a tuple of ideals $\pidealtuple = (\pidealb _i,\pidealc _i)_{i}\in \Ideals ^{2m}$ and
    suppose $\pidealb _i$ and $\pidealc _i $ are all $\pi $-ideals for a common $\pi $.
    Then, the quantity $\epp $ equals $1$
    if $(\pidealb _i,\pidealc _i)=(\Ded ,\Ded )$ for all $i$.
    Assuming otherwise in the following,
    we have:
    \begin{enumerate}
        \item\label{item:smaller-than-w} If $\Nrmpi  \le w$, then $\epp =0$.
        \item\label{item:larger-than-w} Assume $\Nrmpi >w$ and $w$ is large enough depending on $\br{\phi _i}_i$. Then:
        \begin{enumerate}
            \item\label{item:one-nontriv-index} if there is only one $i$ with $\pidealb _i \cap \pidealc _i\subsetneq \Ded $,
            then $\epp = \frac 1 {\Nrm (\pidealb _i\cap \pidealc _i)}$.
            \item\label{item:multiple-nontriv-indices} if there are at least two $i$'s with $\pidealb _i \cap \pidealc _i\subsetneq \Ded $,
            then $\epp \le \frac{1}{\Nrm (\pi \ded )^2 }$.
        \end{enumerate}
    \end{enumerate}
\end{lemma}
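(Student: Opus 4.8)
The plan is to evaluate $\epp $ directly: it is the proportion of $x\in (\ded /I\ppart )^t$, where $I\ppart :=\ded \cap \bigcap _i (\pidealb _i\cap \pidealc _i)$ is a power of $\pi \ded $, for which $W\phi _i(x)+b_i\in \ideala (\pidealb _i\cap \pidealc _i)$ holds for every $i$ (these congruences being well defined on $(\ded /I\ppart )^t$ since $I\ppart \subseteq \pidealb _i\cap \pidealc _i$). If every $(\pidealb _i,\pidealc _i)$ equals $(\Ded ,\Ded )$ there is no condition and $\epp =1$. Assume henceforth that some index $i$ has $\pidealb _i\cap \pidealc _i\subsetneq \Ded $, and for each such $i$ fix a prime $\idealp _i$ of $\Ded $ containing $\pidealb _i\cap \pidealc _i$; such $\idealp _i$ lies over $\pi \ded $, so $\ideala /\ideala \idealp _i\cong \Ded /\idealp _i$ has cardinality $\Nrm (\pi \ded )^{[\Ded /\idealp _i:\ded /\pi \ded ]}\ge \Nrm (\pi \ded )$.

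Assertion \ref{item:smaller-than-w} uses the hypothesis $b_i\notin \bigcup _{\idealp |W}\idealp \ideala $: if $\Nrm (\pi \ded )\le w$ then $\pi $ is one of the prime factors of $W$, so $W\in \pi \ded \subseteq \idealp _{i_0}$ for a chosen nontrivial index $i_0$. If $W\phi _{i_0}(x)+b_{i_0}\in \ideala (\pidealb _{i_0}\cap \pidealc _{i_0})$ held for some $x$, then reducing modulo $\ideala \idealp _{i_0}$ and using $W\phi _{i_0}(x)\in \idealp _{i_0}\ideala $ we would get $b_{i_0}\in \idealp _{i_0}\ideala $, contradicting the hypothesis since $\idealp _{i_0}|W$. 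So no $x$ qualifies and $\epp =0$.

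For assertion \ref{item:larger-than-w} assume $\Nrm (\pi \ded )>w$, with $w$ subject to the finitely many $\{\phi _i\}$-dependent lower bounds below. If $w$ exceeds $\Nrm (\pi '\ded )$ for every prime $\pi '\ded $ of $\ded $ in the support of the finite $\ded $-module $\ideala /\image \phi _i$ ($i=1,\dots ,m$), then $\pi \ded $ avoids these supports; consequently, for any $\pi $-ideal $\mathfrak d$ of $\Ded $, the group $\ideala /(\image \phi _i+\ideala \mathfrak d)$ is at once a quotient of $\ideala /\image \phi _i$ (supported away from $\pi \ded $) and of the $\pi $-power-torsion module $\ideala /\ideala \mathfrak d$, hence $0$, so $\phi _i$ reduces to a surjection modulo $\ideala \mathfrak d$. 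Also $\pi \nmid W$, so multiplication by $W$ is invertible modulo every $\pi $-ideal, and our congruences rewrite as $\phi _i(x)\equiv c_i\pmod{\ideala (\pidealb _i\cap \pidealc _i)}$ for fixed $c_i$. In case \ref{item:one-nontriv-index} only $i=i_0$ gives a genuine constraint, and $x\mapsto \phi _{i_0}(x)$ is a surjective homomorphism $(\ded /I\ppart )^t\surj \ideala /\ideala (\pidealb _{i_0}\cap \pidealc _{i_0})$ (the latter of cardinality $\Nrm (\pidealb _{i_0}\cap \pidealc _{i_0})$), so its fibres are equinumerous and $\epp =1/\Nrm (\pidealb _{i_0}\cap \pidealc _{i_0})$.

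In case \ref{item:multiple-nontriv-indices}, pick two indices $i_1\neq i_2$ with $\pidealb _{i_j}\cap \pidealc _{i_j}\subsetneq \Ded $. Relaxing their two congruences to the coarser moduli $\ideala \idealp _1,\ideala \idealp _2$ only enlarges the count, and each relaxed congruence depends on $x$ only through $x\bmod \pi \ded $ (moving $x$ by $\pi \ded ^t$ moves $\phi _{i_j}(x)$ by $\pi \ideala \subseteq \idealp _j\ideala $); hence $\epp $ is at most the proportion of $\bar x\in (\ded /\pi \ded )^t$ lying in a fixed fibre of the $(\ded /\pi \ded )$-linear map $\Psi :=(\bar\phi _1,\bar\phi _2)\colon (\ded /\pi \ded )^t\to \ideala /\ideala \idealp _1\oplus \ideala /\ideala \idealp _2$, where $\bar\phi _j$ is the reduction of $\phi _{i_j}$. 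Writing $U:=\image \Psi $, a $(\ded /\pi \ded )$-subspace, this proportion is $\le 1/\mgn U$, so it remains to prove $\mgn U\ge \Nrm (\pi \ded )^2$; this is the crux, and the only place we invoke $\ker \phi _{i_1}\not\supseteq \ker \phi _{i_2}$. Since $\bar\phi _j$ is surjective (by the previous paragraph), $U$ surjects onto each $\ideala /\ideala \idealp _j\cong \Ded /\idealp _j$, so $\dim _{\ded /\pi \ded }U\ge [\Ded /\idealp _j:\ded /\pi \ded ]$; if some such degree is $\ge 2$ we are done. Otherwise $\ideala /\ideala \idealp _j\cong \ded /\pi \ded $ for $j=1,2$, $U\subseteq (\ded /\pi \ded )^2$, and the only alternative to $\dim U=2$ is that $U$ is the graph of an isomorphism $\ded /\pi \ded \xrightarrow{\sim }\ded /\pi \ded $, i.e.\ $\ker \bar\phi _1=\ker \bar\phi _2$ as hyperplanes of $(\ded /\pi \ded )^t$. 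To exclude this, fix once and for all $x^*\in \ker \phi _{i_2}\setminus \ker \phi _{i_1}$ (possible by hypothesis): the coincidence $\ker \bar\phi _1=\ker \bar\phi _2$ would force $\phi _{i_1}(x^*)\in \ideala \idealp _1$ despite $\phi _{i_1}(x^*)\neq 0$, so $\idealp _1$ would divide the fixed nonzero integral ideal $\phi _{i_1}(x^*)\ideala ^{-1}$, whence $\Nrm (\pi \ded )\le \Nrm (\idealp _1)\le \Nrm (\phi _{i_1}(x^*)\ideala ^{-1})$; imposing that $w$ also exceed $\Nrm (\phi _{i_1}(x^*)\ideala ^{-1})$ over all pairs $i_1\neq i_2$ rules this out. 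Therefore $\dim U=2$ and $\mgn U=\Nrm (\pi \ded )^2$, which together with the earlier cases yields all the stated values and bounds for $\epp $.
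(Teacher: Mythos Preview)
Your proof is correct and follows essentially the same route as the paper's: the arguments for the trivial case, for \ref{item:smaller-than-w}, and for \ref{item:one-nontriv-index} are identical in substance. For \ref{item:multiple-nontriv-indices} there is a small but genuine difference worth recording: the paper picks, for each ordered pair $(i,j)$, an element $x_{ij}\in \ker\phi_i\setminus\ker\phi_j$ and directly exhibits the two vectors $(0,\phi_j(x_{ij}))$ and $(\phi_i(x_{ji}),0)$ in the image of the map to $(\ideala/\ideala\idealp_i)\oplus(\ideala/\ideala\idealp_j)$, whose $\ded/(\pi)$-span already has $\ge \Nrm(\pi\ded)^2$ elements. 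You instead use the surjectivity of each $\bar\phi_j$ (established for \ref{item:one-nontriv-index}) to force $U$ to project onto both factors, do a case split on residue degrees, and in the remaining degree-one case argue by contradiction that $\dim U=1$ would give $\ker\bar\phi_1=\ker\bar\phi_2$, which a single witness $x^*\in\ker\phi_{i_2}\setminus\ker\phi_{i_1}$ excludes for $w$ large. Both arguments are valid; the paper's is a little more direct (no case split, no appeal to surjectivity at this step) at the cost of using both $x_{ij}$ and $x_{ji}$, while yours gets by with one direction of the kernel non-containment.
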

\begin{proof}
    Let us consider the case \eqref{item:smaller-than-w}.
    In this case, we know $W\in \pi\ded $ by our assumption on the prime factors of $W$ in Theorem \ref{thm:Goldston-Yildirim}.
We claim the value $W\bar\phi (x)+\bar b $ is never $0$ in $\bigoplus _i \ideala / \ideala (\pidealcap )$.
Indeed, choose any $i$ with $\pidealcap \subsetneq \Ded $ (which we are assuming to exist) and any of its prime factors $\idealp $.
It is a prime ideal over $\pi\ded $, and hence $W\in \idealp $.
It follows that $W\phi _i (x)\in \idealp \ideala $ for all $x\in \ded ^t$.
Meanwhile $b_i\notin \idealp\ideala $ by assumption. It follows $W\phi _i (x)+b_i \notin \idealp\ideala $ and in particular $\notin (\pidealcap )\ideala  $.
Our claim follows.

    Next we consider the case \eqref{item:larger-than-w}. In this case the ideals $\pidealb _i,\pidealc _i$ are all coprime to $W $ in $\Ded $.
For the case \eqref{item:one-nontriv-index}, it suffices to show that the map $W\phi _i (-)+b_i \colon (\ded /I )^t \to \ideala /\ideala (\pidealcap )$ is surjective.
Since the translation $+b_i$ and the multiplication by $W$ map on $\ideala / \ideala (\pidealcap )$ are both bijective,
it suffices to show that the map
\begin{equation}\label{eq:linear-map-to-be-shown-surjective}
     \ded ^t \xrightarrow{\phi _i} \ideala \surj \ideala / \ideala (\pidealcap )
\end{equation}
is surjective when $w$ is large enough.
By assumption $\coker (\phi _i) $ is an $\ded $-module which is a finite abelian group.
Hence there are only finitely many prime ideals $(\pi )$ of $\ded $ satisfying $\pi \cdot\coker (\phi _i ) \subsetneq \coker (\phi _i)$.
Now assume $w$ exceeds the norms of those $\pi $'s.
Then as long as $\pidealb _i$ and $\pidealc _i$ are $\pi $-ideals and $\Nrmpi >w$, we have $(\pidealcap )\cdot\coker (\phi _i)=\coker (\phi _i)$,
i.e., the map \eqref{eq:linear-map-to-be-shown-surjective} is surjective.

Let us consider the case \eqref{item:multiple-nontriv-indices}.
First we specify how large $w$ should be.
We are assuming that $\ker (\phi _i)$'s do not contain each other.
For each pair of distinct indices $i,j$, choose an element
$x_{ij}\in \ker (\phi _i)\setminus \ker (\phi _j)$.
Since $\phi _j(x_{ij})\in \ideala $ is non-zero, there are only finitely many prime ideals $\idealp \subset \Ded $ with $\phi _j(x_{ij})\in \idealp\ideala $.
Let $w$ exceed the norms of all the $\idealp$'s appearing this way.

To show \eqref{item:multiple-nontriv-indices} it suffices to verify that the image of the map
$\bar\phi $ 
has cardinality $\ge \Nrm (\pi \ded )^2$.
Suppose $i\neq j$ are among the indices with $\pidealcap \subsetneq \Ded $
and let $\idealp _i ,\idealp _j$ be prime ideals containing them.
We show that the image of the next further composition
\begin{equation}\label{eq:linear-map-with-large-image}
    \ded ^t \xrightarrow{\bar\phi } \bigoplus _{i=1}^m \ideala /\ideala (\pidealcap )
    \surj  (\ideala /\ideala \idealp _i) \oplus (\ideala /\ideala \idealp _j)
\end{equation}
has cadinality $\ge \Nrm (\pi \ded )^2$.
The images of the two elements $x_{ij},x_{ji}$ are respectively the residue classes of $(0,\phi _j (x_{ij}))$ and $(\phi _i(x_{ji}),0)$,
and both are non-zero by the very choice of $w$.
It follows that their $\ded /(\pi )$-linear combinations are all distinct (note that the target is an $\ded / (\pi )$-vector space).
Therefore the image of the map \eqref{eq:linear-map-with-large-image} contains at least $ \mgn{\ded /(\pi )}^2$ distinct elements.
\end{proof}

Now we want to plug our results here into \eqref{eq:unfolded-expression},
but to proceed further, we need the help of Fourier analysis.

\subsection{Fourier transform}

Let $\chihat $ be the Fourier transform of the function $x\mapsto e^x \chi (x)$ so that by inverse Fourier transform:
\begin{equation}
    e^x \chi (x) = \int _{\bbR } \chihat (\xi ) e^{\xi x \kyo } d\xi .
\end{equation}
It follows that $\chi (\frac{\log \Nrm (\idealb )}{\log R}) = \int _{\bbR } \Nrm (\idealb )^{\frac{1}{\log R} (-1+\xi \kyo ) } \chihat (\xi )d\xi $ for ideals $\idealb $.
By the theory of Fourier analysis, we know that $\chihat $ decays rapidly:
\begin{lemma}\label{lem:Fourier-textbook}
    For any given positive numbers $A$ and $b\ge 1$, we have
    $\chihat (\xi ) = O_{A,\chi } (1+|\xi |^{-A})$ and hence
    \begin{align}
        \int _{\bbR } |\chihat (\xi )| d\xi < \infty
        \quad\te{ and }\quad
        \paren{\int _{-\infty }^{-b} + \int _{b}^{+\infty } }|\chihat (\xi ) |d\xi = O_{A,\chi }(b^{-A}) .
    \end{align}
\end{lemma}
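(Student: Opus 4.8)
This is the classical fact --- a weak form of the Paley--Wiener theorem --- that the Fourier transform of a smooth function with compact support decays faster than any polynomial. To prove it I would set $f(x):=e^x\chi (x)$. Since $\chi $ is $C^\infty $ with $\Supp \chi \subset [-1,1]$, so is $f$; consequently $f$ together with each of its derivatives $f^{(j)}$ is a continuous function supported in $[-1,1]$, hence bounded and in $L^1(\bbR )$. The plan is to bound $\chihat $ by repeated integration by parts, and then to read off the two integral consequences.

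Write $\chihat (\xi )=\frac{1}{2\pi }\int _{\bbR }f(x)\,e^{-\xi x\kyo }\,dx$, the normalisation matching the inversion formula stated just before the lemma. Integrating by parts $A$ times --- all boundary terms vanishing because $f$ is compactly supported --- gives
\[ \chihat (\xi )=\frac{1}{2\pi }\,(\xi \kyo )^{-A}\int _{\bbR }f^{(A)}(x)\,e^{-\xi x\kyo }\,dx \qquad (\xi \neq 0), \]
whence $|\chihat (\xi )|\le \bigpa{\tfrac{1}{2\pi }\int _{\bbR }|f^{(A)}(x)|\,dx}\cdot |\xi |^{-A}$ for every $\xi \neq 0$, while the trivial bound $|\chihat (\xi )|\le \tfrac{1}{2\pi }\int _{\bbR }|f(x)|\,dx$ holds for all $\xi $. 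Combining the two estimates yields, for every $A>0$, the rapid-decay bound $\chihat (\xi )=O_{A,\chi }((1+|\xi |)^{-A})$, which in particular gives the pointwise estimate asserted in the lemma. The only care needed here is the justification of the repeated integration by parts, and that is immediate from the smoothness and compact support of $f$.

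The two integral statements then follow formally. For $\int _{\bbR }|\chihat (\xi )|\,d\xi <\infty $ I would invoke the decay bound with $A=2$ and note $\int _{\bbR }(1+|\xi |)^{-2}\,d\xi <\infty $. For the tail estimate I would instead use the bound with exponent $A+1$ in place of $A$, so that (using $b\ge 1$)
\[ \bigpa{\int _{-\infty }^{-b}+\int _{b}^{+\infty }}|\chihat (\xi )|\,d\xi \;\le\; 2\,O_{A,\chi }(1)\int _{b}^{+\infty }(1+|\xi |)^{-A-1}\,d\xi \;=\;O_{A,\chi }(b^{-A}). \]
There is no genuine obstacle in any of this: it is a textbook computation, the one routine point being the differentiation under the integral sign / integration by parts, licensed throughout by the compact support of $f$.
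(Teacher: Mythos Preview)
Your argument is correct and is exactly the standard integration-by-parts proof that the paper defers to (the paper gives no proof of its own, only a reference to a textbook and to \cite[Lemma~6.15]{KMMSY}). The only cosmetic point is that integrating by parts ``$A$ times'' tacitly assumes $A\in\bbN$; for arbitrary $A>0$ one applies the integer case $\lceil A\rceil$ and uses $(1+|\xi|)^{-\lceil A\rceil}\le (1+|\xi|)^{-A}$.
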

\begin{proof}
    See any textbook on Fourier analysis or \cite[Lemma 6.15 and its corollary]{KMMSY}.
\end{proof}

The right hand side of \eqref{eq:unfolded-expression} is written as:
\begin{multline}\label{eq:expression-after-Fourier}
    \sum _{(\idealb _i ,\idealc _i )_i \in \Ideals ^{2m}} (\log R)^{2m}
    \prod _{i=1}^m \mu (\idealb _i)\mu (\idealc _i)
    \chihatint{\idealb _i}{\xi _i} \chihatint{\idealc _i}{\eta _i}
    \\
    \cdot \myep .
\end{multline}

Let $I$ be the interval $I:=[-\sqrt{\log R},\sqrt{\log R}]$.
For tuples $(\undl\xi ,\undl\eta )\in I ^{2m}$, 
consider the infinite sum
\begin{multline}\label{eq:consider-the-infinite-sum}
    \sumXiEta :=
    \\
    \sum _{(\idealb_i,\idealc _i)_i \in (\Ideals )^{2m}}
    \myep \cdot
    \prod _{i=1}^m \mu (\idealb _i )\mu (\idealc _i)
    \Nrm (\idealb _i)^{\frac{1}{\log R}(-1+\xi_i\kyo ) }
    \Nrm (\idealc _i)^{\frac{1}{\log R}(-1+\eta_i\kyo ) }
.\end{multline}

This subsection is devoted to the proof of:
\begin{proposition}\label{prop:convergence-and-estimate}
    The sum $E((\undl\xi ,\undl\eta ),R,w,b)$ converges absolutely
    and uniformly in $(\undl\xi ,\undl\eta )\in \bbR ^{2m}$.
    For any given $A>0$, the quantity \eqref{eq:expression-after-Fourier} is equal,
    up to an error $\pm O_{A,\chi ,m,\therank } ((\log R)^{-A})$, to:
    \begin{equation}\label{eq:convergence-and-estimate}
        (\log R)^{2m}\int _{I^{2m}} \sumXiEta \paren{\prod _{i=1}^m \chihat (\xi _i )\chihat (\eta _i) }d\undl\xi d\undl\eta .
    \end{equation}
\end{proposition}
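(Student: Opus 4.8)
The plan is to read off from the $\pi$-part factorization of $\myep$ a bound that is uniform in $(\undl\xi,\undl\eta)$ and decays in the ideals $\idealtuple$ fast enough to dominate the series; granting this, the absolute and uniform convergence of $\sumXiEta$ is automatic, Fubini exchanges the sum over ideals with the Fourier integral, and the rapid decay of $\chihat$ trims the integral from $\bbR^{2m}$ to $I^{2m}$ at an admissible cost. First I would put $s_i:=\tfrac1{\log R}(-1+\xi_i\kyo)$ and $t_i:=\tfrac1{\log R}(-1+\eta_i\kyo)$, so that $\Re(s_i)=\Re(t_i)=-\tfrac1{\log R}$ and $|\Nrm(\idealb)^{s_i}|=\Nrm(\idealb)^{-1/\log R}$ no matter what $(\undl\xi,\undl\eta)$ is. By Lemma \ref{lem:E-is-multiplicative} we have $\myep=\prod_\pi\pep$, and since $\mu$ and $\Nrm(-)$ are multiplicative along $\pi$-parts, the general term of $\sumXiEta$ factors over $\pi$; I would then sum the absolute values of its $\pi$-local factors over all configurations $(\idealb_i\ppart,\idealc_i\ppart)_i$ of $\pi$-ideals and call the outcome $1+c_\pi$, the $1$ being the contribution of the all-trivial configuration, on which $\pep=1$.

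By Lemma \ref{lem:pi-local-computation}\eqref{item:smaller-than-w}, $c_\pi=0$ whenever $\Nrmpi\le w$. For $\Nrmpi>w\ge w_0$, with $w_0$ large enough that Lemma \ref{lem:pi-local-computation}\eqref{item:larger-than-w} applies to our fixed maps $\phi_i$, the factor $\mu$ forces every $\idealb_i\ppart,\idealc_i\ppart$ occurring in the sum to be a product of distinct primes of $\Ded$ dividing $\pi\Ded$, of which there are at most $\therank$; hence there are $O_{\therank,m}(1)$ non-trivial configurations. If exactly one index $i$ is non-trivial and $\idealb_i\ppart\cap\idealc_i\ppart$ is a single prime $\idealp$, then by Lemma \ref{lem:pi-local-computation}\eqref{item:one-nontriv-index} each admissible choice of $(\idealb_i\ppart,\idealc_i\ppart)$ contributes at most $\Nrm(\idealp)^{-1}\cdot\Nrm(\idealp)^{-1/\log R}=\Nrm(\idealp)^{-1-1/\log R}$; every other non-trivial configuration contributes at most $\Nrmpi^{-2}$, using Lemma \ref{lem:pi-local-computation}\eqref{item:multiple-nontriv-indices} when two or more indices are non-trivial and $\Nrm(\idealp)\ge\Nrmpi$ for the prime factors of $\idealb_i\ppart\cap\idealc_i\ppart$ otherwise. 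Collecting these, uniformly in $(\undl\xi,\undl\eta)$,
\[
    c_\pi \ \le\ O_{\therank,m}\paren{\ \sum_{\idealp\mid\pi\Ded}\Nrm(\idealp)^{-1-1/\log R}\ +\ \Nrmpi^{-2}\ }.
\]

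Next I would sum over all prime elements $\pi$ of $\ded$. One has $\sum_\pi\Nrmpi^{-2}=O_{\Ded}(1)$, while $\sum_{\idealp\subset\Ded}\Nrm(\idealp)^{-1-1/\log R}\le\log\zeta_{\Ded}(1+\tfrac1{\log R})+O_{\Ded}(1)$; and since $\zeta_{\Ded}$ has a simple pole at $1$ (see \eqref{eq:pole-of-zeta}) we get $\zeta_{\Ded}(1+\tfrac1{\log R})=O_{\Ded}(\log R)$, hence $\sum_{\idealp\subset\Ded}\Nrm(\idealp)^{-1-1/\log R}\le\log\log R+O_{\Ded}(1)$ for $R$ large. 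Therefore $\sum_\pi c_\pi\le C_1(\therank,m)\log\log R+O_{\therank,m,\Ded}(1)$, so $\sumXiEta$ is dominated term by term, uniformly in $(\undl\xi,\undl\eta)\in\bbR^{2m}$, by a convergent non-negative series of total $\le\prod_\pi(1+c_\pi)\le\exp\paren{\sum_\pi c_\pi}\le(\log R)^{C_1(\therank,m)}\cdot O_{\therank,m,\Ded}(1)$. This yields the absolute and uniform convergence claimed in the Proposition together with a polynomial bound $|\sumXiEta|\le(\log R)^{C_1(\therank,m)}\cdot O_{\therank,m,\Ded}(1)$. I expect this step to be the crux: the point of the $\tfrac1{\log R}$-shift in the Fourier inversion is precisely that the exponents have negative real part $-\tfrac1{\log R}$, putting the $\pi$-local estimate in the shape $\Nrm(\idealp)^{-1-1/\log R}$, so that summing over primes is governed by $\log\zeta_{\Ded}(1+\tfrac1{\log R})=O(\log\log R)$ via the pole; dropping the pole would leave only $O(\log R)$, far too weak for the last step.

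Finally, starting from \eqref{eq:expression-after-Fourier} and using that $|\Nrm(\idealb_i)^{s_i}|=\Nrm(\idealb_i)^{-1/\log R}$ is independent of $\xi_i$ and that $\int_{\bbR}|\chihat|<\infty$ (Lemma \ref{lem:Fourier-textbook}), the term-by-term domination above makes the double sum-and-integral in \eqref{eq:expression-after-Fourier} absolutely convergent, so Fubini rewrites \eqref{eq:expression-after-Fourier} as $(\log R)^{2m}\int_{\bbR^{2m}}\sumXiEta\paren{\prod_i\chihat(\xi_i)\chihat(\eta_i)}\,d\undl\xi\,d\undl\eta$. Its difference with \eqref{eq:convergence-and-estimate} is the same integral over $\bbR^{2m}\setminus I^{2m}$, where at least one of the $2m$ integration variables has absolute value $>\sqrt{\log R}$; so by the polynomial bound on $|\sumXiEta|$ and the tail estimate $\paren{\int_{-\infty}^{-b}+\int_b^{+\infty}}|\chihat|=O_{A',\chi}(b^{-A'})$ of Lemma \ref{lem:Fourier-textbook} with $b=\sqrt{\log R}$, this difference is at most $(\log R)^{2m+C_1(\therank,m)}\cdot O_{\therank,m,\Ded}(1)\cdot O_{A',\chi,m}\paren{(\log R)^{-A'/2}}$ for every $A'>0$. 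Taking $A'$ large in terms of $A$, $m$, $\therank$ makes this $O_{A,\chi,m,\therank}\paren{(\log R)^{-A}}$, which is the asserted estimate and completes the proof.
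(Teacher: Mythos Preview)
Your proof is correct and follows essentially the same approach as the paper: both exploit the Euler-product factorization of $\myep$ via Lemma \ref{lem:E-is-multiplicative}, the $\pi$-local bounds of Lemma \ref{lem:pi-local-computation}, and the rapid decay of $\chihat$ from Lemma \ref{lem:Fourier-textbook}. The only difference is the order of operations: the paper first truncates each one-dimensional Fourier integral from $\bbR$ to $I$ (its Lemma \ref{lem:change-of-domain-of-integration}) and then interchanges the sum over $\idealtuple$ with the (now compact) integral, whereas you first apply Fubini over all of $\bbR^{2m}$ and then trim to $I^{2m}$ using the uniform polynomial bound on $\sumXiEta$. Both orderings are valid and yield the same error. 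One cosmetic point: your route through $\log\zeta_{\Ded}(1+\tfrac1{\log R})$ introduces an explicit $\Ded$-dependence in the implied constant, so strictly speaking your final error is $O_{A,\chi,m,\therank,\Ded}((\log R)^{-A})$; the paper's own bound \eqref{eq:the-sum-of-absolute-values} carries the same hidden dependence on $q$, so this is harmless for the application.
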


\subsubsection{Convergence}
Note that by the presence of the \Mobius\ function,
only those terms where all $\idealb_i$ and $\idealc _i $ are square-free contribute to the sum
\eqref{eq:consider-the-infinite-sum} and
that the sum decomposes into the product of its $\pi $-parts by the multiplicativity of the functions involved (see Lemma \ref{lem:E-is-multiplicative} for the multiplicativity of $\myep $).
Namely:
\begin{multline}\label{eq:Euler-product-presentation}
    \sumXiEta =
     \prod _{(\pi )} \sumXiEtaPi ,
     \quad \text{where }\sumXiEtaPi:=
    \\
    \sum _{\begin{subarray}{c}
    (\pidealb _i,\pidealc _i )_i \\
    \te{$\pi $-ideals,}
    \\
    \te{square-free}
        \end{subarray}}
    \epp
    \paren{\prod _{i=1}^m \mu (\pidealb _i)\mu (\pidealc _i)
    \Nrm (\pidealb _i)^{\frac 1{\log R}(-1+\xi _i\kyo )}
    \Nrm (\pidealc _i)^{\frac 1{\log R}(-1+\eta _i\kyo )}
    } 
.\end{multline}
Note that there are at most $\therank $ prime ideals of $\Ded $ over a given $(\pi ) $ and hence
there are at most $2^{\therank }$ square-free $\pi $-ideals.

By Lemma \ref{lem:pi-local-computation}, if $\Nrm (\pi \ded )\le w$ then $\sumXiEtaPi =1$; we also know the following when $\Nrm (\pi\ded )>w$, supposing (as we shall always do) that $w$ is large enough to invoke the lemma:
\begin{itemize}
    \item Unless $(\pidealb _i,\pidealc _i)=(\Ded ,\Ded )$,
    $(\Ded ,\idealp  )$, $(\idealp ,\Ded ) $ or $(\idealp ,\idealp )$ for some prime $\pi $-ideal $\idealp $ with $\Nrm (\idealp )=\Nrm (\pi\ded )$,
    we have $(0\le )\ \epp  \le 1/ \Nrm (\pi\ded )^2$;
    \item For those exceptional cases in the previous item, we know $\epp = 1$ for the first case and $=1/\Nrm (\pi\ded )$ for the others.
\end{itemize}
This gives us the following crude estimate, where $O_{\therank }(1)$ can be taken to be $2^{\therank }$:
\begin{multline}\label{eq:pi-crude-estimate}
    \sumXiEtaPi
    =
    \\
    1+\sum _{\idealp , \deg =1}
    \paren{
        -\Nrm (\pi\ded ) ^{-1+\frac 1{\log R}(-1+\xi _i\kyo )}
        -\Nrm (\pi\ded ) ^{-1+\frac 1{\log R}(-1+\eta _i\kyo )}
        +\Nrm (\pi\ded ) ^{-1+\frac 1{\log R}(-2+(\xi _i+\eta _i)\kyo )}
        } 
        \\
        + O_{\therank }\paren{ 1} \cdot \frac{1}{\Nrm (\pi\ded )^2}
.\end{multline}
In particular we have $\sumXiEta = 1+O_{\therank }\paren{\Nrm (\pi\ded )^{-1-\frac 1{\log R}}} $ uniformly in $\undl\xi ,\undl\eta $.
Therefore by basic facts on Euler products (such as \cite[Lemma 6.19]{KMMSY})
we conclude that the product $\prod _{(\pi )} \sumXiEtaPi $ converges absolutely.
As a result, the sum of absolute values associated with the sum $\sumXiEta$ \eqref{eq:consider-the-infinite-sum}
can be estimated as:
\begin{multline}\label{eq:the-sum-of-absolute-values}
    \sum _{(\idealb _i,\idealc _i)_i }
    \myep \cdot
    \paren{
        \prod _{i=1}^m
    \Nrm (\idealb _i)^{\frac 1{\log R}}
    \Nrm (\idealc _i)^{\frac 1{\log R}}
    }
    \\
    = \prod _{(\pi )} \paren{ 1+O_{\therank }\paren{\Nrmpi ^{-1-\frac 1{\log R}}}  }
    =  (\log R +O(1)) ^{O_r(1)},
\end{multline}
proving the convergence claim in Proposition \ref{prop:convergence-and-estimate}.

Now let us move on to the comparison of \eqref{eq:expression-after-Fourier} and \eqref{eq:convergence-and-estimate}.

\subsubsection{Summation and integration}

We want to replace the domain $\bbR $ of integration in \eqref{eq:expression-after-Fourier} by the bounded interval $I=[-\sqrt{\log R},\sqrt{\log R}]$.
\begin{lemma}\label{lem:change-of-domain-of-integration}
    Regarding the expression \eqref{eq:expression-after-Fourier},
    \begin{enumerate}
        \item 
    We have the following estimate:
    \begin{equation}
        \int _{\bbR } \Nrm (\idealb )^{\frac{1}{\log R} (-1+\xi \kyo ) } \chihat (\xi )d\xi
        =\int _{I } \Nrm (\idealb )^{\frac{1}{\log R} (-1+\xi \kyo ) } \chihat (\xi )d\xi
        \pm O_{A,\chi }\bigpa{\Nrm (\idealb )^{-\frac 1{\log R}} (\log R)^{-A} }
    \end{equation}
    for all ideals $\idealb \subset \Ded $. 
        \item Let $\undl{d\xi}\, \undl{d\eta }$ be a shorthand symbol for $ d\xi _1\dots d\xi _m d\eta _1 \dots d\eta _m $.
        For each $\idealtuple \in (\Ideals )^{2m}$ and positive number $A>0$, we have:
        \begin{align}\label{eq:expression-sum-int-interchanged}
            \prod _{i=1}^m
    &\paren{\chihatint{\idealb _i}{\xi _i} }
    \cdot \paren{\chihatint{\idealc _i}{\eta _i}}
    \\
          =  &\int _{I ^{2m}}  \Bigpa{\prod _i
            \Nrm (\idealb _i)^{\frac{-1}{\log R} (1-\xi _i \kyo )}
            \Nrm (\idealc _i)^{\frac{-1}{\log R} (1-\eta _i \kyo )}
            \cdot\chihat (\xi _i ) \chihat (\eta _i)
            } 
            \undl{d\xi}\, \undl{d\eta }
            \\
            &\pm O_{A,\chi ,m }\paren{
                (\log R)^{-A}\prod _{i=1}^m
                \Nrm (\idealb _i)^{\frac{-1}{\log R}}
                \Nrm (\idealc _i)^{\frac{-1}{\log R}}
            }
            .
        \end{align}
    \end{enumerate}
\end{lemma}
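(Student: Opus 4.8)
The plan is to deduce part (2) from part (1) by a purely formal expansion, so that all of the content sits in part (1), which is itself an immediate consequence of the rapid decay of $\chihat$ recorded in Lemma \ref{lem:Fourier-textbook}. Throughout one may assume $R$ is large (as it is in every application), so that $\sqrt{\log R}\ge 1$.

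For part (1), I would first note that since $\Nrm(\idealb)$ is a positive real number and $\kyo$ is purely imaginary, the factor $\Nrm(\idealb)^{\xi\kyo/\log R}=\exp\bigl(\tfrac{\xi\kyo}{\log R}\log\Nrm(\idealb)\bigr)$ has modulus $1$; hence
\[
    \bigl| \Nrm(\idealb)^{\frac 1{\log R}(-1+\xi\kyo)} \bigr| = \Nrm(\idealb)^{-\frac 1{\log R}}
    \qquad (\xi \in \bbR).
\]
Consequently the difference between the integral over $\bbR$ and the integral over $I=[-\sqrt{\log R},\sqrt{\log R}]$ is, in absolute value, at most
\[
    \Nrm(\idealb)^{-\frac 1{\log R}}\left( \int_{-\infty}^{-\sqrt{\log R}} + \int_{\sqrt{\log R}}^{+\infty} \right) |\chihat(\xi)|\, d\xi .
\]
Applying the tail bound of Lemma \ref{lem:Fourier-textbook} with $b=\sqrt{\log R}$ and with $2A$ in place of $A$, the parenthesized quantity becomes $O_{A,\chi}\bigl((\log R)^{-A}\bigr)$ since $(\sqrt{\log R})^{-2A}=(\log R)^{-A}$; this is exactly the claimed error term.

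For part (2), I would write each of the $2m$ one-variable integrals over $\bbR$ on the left-hand side as its restriction to $I$ plus a tail, and then multiply out the resulting product of $2m$ two-term expressions into $2^{2m}$ summands. By Fubini, the summand built entirely from the ``$I$-parts'' is exactly the iterated integral over $I^{2m}$ displayed in the statement. In every other summand at least one factor is a tail; bounding each ``$I$-part'' factor by $\Nrm(\cdot)^{-1/\log R}\int_{\bbR}|\chihat|<\infty$ (using the modulus identity above) and each tail factor by part (1), one finds that every such summand is $O_{A,\chi}\bigl((\log R)^{-A}\prod_{i=1}^m \Nrm(\idealb_i)^{-1/\log R}\Nrm(\idealc_i)^{-1/\log R}\bigr)$: all $2m$ factors together produce the product of the $\Nrm(\cdot)^{-1/\log R}$'s, and at least one produces an additional $(\log R)^{-A}$. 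Summing over the $2^{2m}-1$ error summands absorbs this bounded count into an $O_{A,\chi,m}$. The whole argument is routine; the only points worth care are feeding $2A$ rather than $A$ into Lemma \ref{lem:Fourier-textbook} to absorb the square root in the definition of $I$, and keeping the $m$-dependence (rather than ideal-dependence) of the implied constants transparent. I foresee no genuine obstacle.
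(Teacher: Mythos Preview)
Your proposal is correct and follows essentially the same approach as the paper: for (1) you bound the tail integral by $\Nrm(\idealb)^{-1/\log R}\int_{\bbR\setminus I}|\chihat|$ and invoke Lemma \ref{lem:Fourier-textbook}, and for (2) you apply (1) to each factor, expand the product, and bound the $I$-part factors by $\Nrm(\cdot)^{-1/\log R}\cdot O_\chi(1)$. The only cosmetic differences are that you are more explicit about feeding $2A$ into the tail lemma and about the $2^{2m}$-term expansion, whereas the paper compresses part (2) into a single sentence.
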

\begin{proof}
    (1)
    The error $\int _{\bbR \setminus I} \Nrm (\idealb )^{\frac 1{\log R}(-1+\xi\kyo )} \chihat (\xi ) d\xi $ is bounded in magnitute by
    \begin{equation}
        \Nrm (\idealb )^{\frac{-1}{\log R} } \int _{\bbR \setminus I} |\chihat (\xi )| d\xi
    \end{equation}
    which has at most the claimed size by Lemma \ref{lem:Fourier-textbook}.

    (2)
    Apply (1) to each of $\idealb _i$ and $\idealc _i$ and take the product,
    taking into account the bound
    \begin{equation}
        \int _I \Nrm (\idealb )^{\frac 1{\log R}(-1+\xi\kyo )  } \chihat (\xi ) d\xi
        = \Nrm (\idealb )^{\frac{-1}{\log R}} O_{\chi }(1) .
    \end{equation}
    This completes the proof.
\end{proof}

Apply the operation $(\log R)^{2m}\sum _{\idealtuple  } \prod _{i=1}^m \mu (\idealb _i)\mu (\idealc _i) \Bigpa{-} \myep $
to the estimate of
Lemma \ref{lem:change-of-domain-of-integration}(2).
The left hand side becomes precisely \eqref{eq:expression-after-Fourier}.
The main term of the right hand side becomes
\begin{multline}\label{eq:right-hand-side-becomes}
    (\log R)^{2m}\sum _{\idealtuple }\int _{I ^{2m}} F(\idealtuple ,\underline\xi ,\underline\eta ) d\underline\xi d\underline\eta ,
    \quad\te{ where }
    \\
    F(\idealtuple , \undl\xi ,\undl\eta ):=
    \paren{\prod _{i=1}^m
    \mu (\idealb _i)\mu (\idealc _i)\Nrm (\idealb _i)^{\frac{-1}{\log R}(1-\xi _i \kyo )}
    \Nrm (\idealc _i)^{\frac{-1}{\log R}(1-\eta _i \kyo )}
    \chihat (\xi _i)
    \chihat (\eta _i)
    }
    \myep
.\end{multline}
We claim that we can interchange the sum and integral here.
Indeed, by the convergence part of Proposition \ref{prop:convergence-and-estimate}, i.e., formula \eqref{eq:the-sum-of-absolute-values}, the sum $\sum _{\idealtuple }F(\idealtuple ,\undl\xi ,\undl\eta ) $
converges absolutely and uniformly in $\undl\xi $ and $\undl\eta $ to a continuous function.
Since $I$ is a bounded closed interval, our claim follows so that the value \eqref{eq:right-hand-side-becomes}
equals \eqref{eq:convergence-and-estimate}.

The error term is at most the following, which we can bound again by \eqref{eq:the-sum-of-absolute-values}:
\begin{equation}
    O_{A,\chi ,m} ((\log R)^{2m-A} )\sum _{\idealtuple  }
    \paren{
        \prod _{i=1}^m
        \Nrm (\idealb _i)^{\frac{1}{\log R}}
        \Nrm (\idealc _i)^{\frac{1}{\log R}}
    }
    \myep
    =
    O_{A,\chi ,m} ((\log R)^{2m-A +O_r(1)} )
.\end{equation}
The proof of Proposition \ref{prop:convergence-and-estimate} is now complete.

\subsection{Intermission}

Before proceeding further, let us recall basic facts from elementary calculus and the theory of the zeta function.
The absolute constans $c_i$ and $C_i$ appearing below can be made explicit, but we do not seek to do so because their precise values are not important.
Potentially big constants are denoted in upper case and
potentially small ones in lower case.
From \S \ref{sec:Euler} on, when we say some quantities should be small or large enough, we will be implicitly using these constants to specify the thresholds.

\subsubsection{Some calculus}

There is a positive real number $c_1>0$ such that for all $\varepsilon \in \bbC $ with $|\varepsilon |\le c_1$ one has
\begin{align}
    \label{eq:exp}
    e^{\varepsilon } &=1+O(1)\cdot\varepsilon \quad\te{ and }
    \\
    \label{eq:log}
    \log (1+\varepsilon )&=1+O(1)\cdot\varepsilon
\end{align}
with both $O(1)\le 2$. (Actually one can take $c_1:=1/2$, say.)
Next, for real numbers $A\ge 2$ we have Taylor expansion
\begin{equation}
    A^{-1+\varepsilon } = A^{-1}+\frac{\log A}{A}\varepsilon
    +\frac{1}{2}\frac{\log (A)^2}{A}\varepsilon ^2 +\cdots .
\end{equation}
Noting that the positively valued function $A\mapsto \frac{\log A}{A}$ has bounded range, there are $c_2>0$
and $C_3>0$ such that for all $A\ge 2$ and $\varepsilon \in \bbC $ with $|\varepsilon | \le \frac{c_2}{\log A}$ we have
\begin{equation}
    1-A^{-1+\varepsilon } = 1-A^{-1}+O(1)\cdot\frac{\log A}{A}\varepsilon
\end{equation}
with $O(1)\le C_3$. It follows that
\begin{equation}\label{eq:power-of-A}
    \frac{1-A^{-1+\varepsilon }}{1-A^{-1}}
    = 1 + \frac{O(1)}{1-A^{-1}}
    \frac{\log A}{A}\varepsilon
    = 1+ O(1)\frac{\log A}{A}\varepsilon
\end{equation}
with a new $O(1)$ constant $\le 2 C_3$.
Also, since the function $A\mapsto \frac{\log A}{A}$ decreases for $A\ge e$, for prime ideals $\idealp \subset \Ded $ with $\idealp \cap \ded = (\pi )$ we have
\begin{equation}\label{eq:log-N-over-N}
    \frac{\log \Nrm (\idealp )}{\Nrm (\idealp )}
    \le
    \frac{\log \Nrm (\pi \ded )}{\Nrm (\pi \ded )}
\end{equation}
at least if $\Nrm (\pi \ded ) \ge 3$.
This inequality happens to be true even when $\Nrm (\pi\ded )=2$ thanks to the equality $\frac{\log 4}{4}=\frac{\log 2}{2}$.

\subsubsection{The zeta function}\label{sec:zeta}
We need to recall the zeta function of $\Ded $.
For $s\in \bbC $ with $\Real (s)>1$, we set:
\begin{equation}\label{eq:def-of-zeta}
    \zeta _{\Ded } (s) := \prod _{\idealp \in \spDed } \paren{1-\frac{1}{\Nrm (\idealp )^s}}\inv
    \quad\paren{=\sum _{\ideala \in \Ideals } \frac{1}{\Nrm (\ideala )^s} }
.\end{equation}
It is known that $\zeta _{\Ded }(s)$ extends to a meromorphic function on $\bbC $.
Actually we know \cite[Theorem 5.9, p.53]{Rosen} that there is a polynomial
$L(u)\in \bbZ [u]$ of degree $2g$, with $L(q\inv )=\mgn{\Pic ^0(\kurve )}/q^g$, such that:
\begin{equation}
    \zeta _{\Ded }(s)
    \cdot
    \prod _{v\in \complm }
    \paren{1-\frac{1}{\mgn{\bbF (v)}^s } } \inv
    =
    \frac{L(q^{-s})}{(1-q^{-s}) (1-q^{1-s} )}
.\end{equation}
It follows that $\zeta _{\Ded } (s)$ has a simple pole at $s=1$ with positive residue, say $\kappa _{\Ded }> 0$:
\begin{equation}\label{eq:pole-of-zeta}
    \zeta _{\Ded } (s) = \frac{\kappa _{\Ded }}{s-1} + O_{\Ded } (1) \quad\te{ as }s\to 1.
\end{equation}
Moreover, by Weil's Riemann Hypothesis for algebraic curves \cite[Theorem 5.10, p.55]{Rosen}, we know that
all the roots of $L(u)$ in $\bbC $ have magnitude $q^{-1/2}$.
\Cheb 's density theorem \ref{thm:Cheb}
below is an important consequence of this.

By explicit computation we know $\zeta _{\Fq [t]} (s)=1/\paren{1-{q^{1-s}} } $ (see \cite[p.11]{Rosen}).
From this it follows for integers $i\ge 1$:
\begin{equation}\label{eq:Cheb-for-polynomial-ring}
    \mgn{ \br{\te{maximal ideals }\pi\ded \subset \ded \mid \Nrmpi = q^i } }
    = \frac{q^i}{i} + O\paren{ \frac{q^{i/2}}{i} },
\end{equation}
see \cite[Theorem 2.2, p.14]{Rosen}.
This
gives the following.
The sums are over maximal ideals $\pi\ded \subset \ded $ satisfying the indicated conditions:
\begin{align}
    \label{eq:large-pi}
    \sum _{\Nrmpi > w} \frac{1}{\Nrmpi ^2}
    &\le C_4 \frac{1}{w\log _q w} \quad\te{ and }
    \\
    \label{eq:small-pi}
    \sum _{\Nrmpi \le w} \frac{\log _q \Nrmpi }{\Nrmpi }
    &\le C_5 \log _q w
\end{align}
for some $C_4,C_5>0$ and all $w>1$.

\subsection{Euler product}\label{sec:Euler}
Now we compute the main term \eqref{eq:convergence-and-estimate} using the Euler product presentation \eqref{eq:Euler-product-presentation} and estimate \eqref{eq:pi-crude-estimate}.

We start with some detailed estimate of the Euler product.
Recall that the latter estimate requires $\Nrmpi >w$ and that $w$ be large enough.
Assume $w$ is large enough to match this requirement.
Then by \eqref{eq:pi-crude-estimate} and basic facts like
$(1+\varepsilon _1)(1+\varepsilon _2)(1+\varepsilon _1+\varepsilon _2)^{-1}=
1 +O(\varepsilon _1\varepsilon _2)$, we have for $\Nrmpi > w$:
\begin{multline}\label{eq:one-step-away-from-Euler}
    \sumXiEtaPi
    \\
    =
    \frac{
        \paren{1-\Nrmpi ^{-1+\frac{1}{\log R}(-1+\xi_i\kyo )  } }
        \paren{1-\Nrmpi ^{-1+\frac{1}{\log R}(-1+\eta _i\kyo )  } }
    }
    {
        1-\Nrmpi ^{-2+\frac{1}{\log R}(-2+(\xi_i+\eta _i)\kyo )  }
    } 
    \cdot (1+O_{\therank } \paren{\Nrmpi ^{-2} } )
.\end{multline}
Take the product of \eqref{eq:one-step-away-from-Euler}
over all $\pi\ded $ with $\Nrmpi >w$.
By the definition of the zeta function $\zeta _{\Ded }$ in \eqref{eq:def-of-zeta}, we get the following.
There, the symbol $\prod _{\idealp \te{ with } \Nrmpi \le w} $ means the product over prime ideals $\idealp $ of $\Ded $ such that $\pi\ded :=\idealp \cap \ded $ satisfies the indicated condition:
\begin{multline}\label{eq:by-very-def-of-zeta}
    \sumXiEta = \\
    \prod _{i=1}^m
    \left(
    \frac
    {
        \prod\limits _{\idealp \te{ with } \Nrmpi \le w}
        \paren{1-\Nrm (\idealp )^{-1+\frac{1}{\log R}( -1+\xi _i\kyo  )  }
        }
    }
    {
                            \zeta _\Ded \paren{1+\frac{1}{\log R}(1-\xi_i\kyo )}
    }
    \frac
    {\prod\limits _{\idealp \te{ with } \Nrmpi \le w}
        \paren{1-\Nrm (\idealp )^{-1+\frac{1}{\log R}( -1+\eta _i\kyo  )  }
        }
    }
    {\zeta _\Ded \paren{1+\frac{1}{\log R}(1-\eta _i\kyo )}
    }
    \right.
    \\[5mm]
    \cdot
    \left.
    \frac
    {\zeta _\Ded \paren{1+\frac{1}{\log R}(2-(\xi_i+\eta _i)\kyo ) }
    }
    {\prod\limits _{\idealp \te{ with } \Nrmpi \le w}
        \paren{1-\Nrm (\idealp )^{-1+\frac{1}{\log R}( -2+(\xi _i+\eta _i)\kyo  )  }
        }
    }
    \right)
    \cdot (1+mO_{\therank } (1/w\log _q w ) ) .
\end{multline}
Here the last factor has been obtained using \eqref{eq:exp}, \eqref{eq:log} via $\exp \circ \log = \id $ and \eqref{eq:large-pi} as follows:
\begin{align}
    \prod _{\Nrmpi>w} (1+O_r\paren{ \Nrmpi^{-2}})
    &=
    \exp \paren{ \sum _{\Nrmpi >w} O_r\paren{ \Nrmpi ^{-2} } } \\
    &=\exp (O_r \paren{1/(w\log _q w)})
    =1+O_r(1/(w\log _q w))
,\end{align}
where for the last estimate we have to assume $O_r(1)/(w\log _q w)$ is small enough.
Since we have $m$ such factors, we get the factor $1+mO_r(1/w\log _q w)$.

Formula \eqref{eq:pole-of-zeta} can be written as
$\zeta _{\Ded }(1+\varepsilon ) = \frac{\kappa _{\Ded }}{\varepsilon }(1+O_{\Ded }(\varepsilon )) $ with $\varepsilon \in \bb C$ close to $0$.
Applying this to $\varepsilon =\frac{1}{\log R} (1-\xi _i \kyo ) $,
$\frac{1}{\log R} (1-\eta _i \kyo ) $ and $\frac{1}{\log R} (2-(\xi _i+\eta _i )\kyo )$
all of size $O(\frac{1}{\sqrt{\log R}})$,
we find that the product of zeta functions in \eqref{eq:by-very-def-of-zeta} has the following form when $R$ is large enough:
\begin{equation}
    \paren{\frac{1}{\kappa _{\Ded}\log R} }^m
    \cdot
    (1+mO_{\Ded}\paren{\frac{1}{\sqrt{\log R}} } )
    \cdot
    \prod _{i=1}^m\frac{(1-\xi _i\kyo )(1-\eta _i\kyo )}{2-(\xi _i+\eta _i )\kyo }
.\end{equation}
We have to compute the products $\prod _{\idealp \te{ with }\Nrmpi \le w}$ in \eqref{eq:by-very-def-of-zeta} as well.
By \eqref{eq:power-of-A} we know for small complex numbers $\varepsilon $:
\begin{equation}\label{eq:product-small-idealp}
    \prod _{\idealp \te{ with }\Nrmpi \le w}
    \paren{1-\Nrm (\idealp )^{-1+\varepsilon }}
    =
    \prod _{\idealp \te{ with }\Nrmpi \le w}
    \paren{ 1-\Nrm (\idealp )^{-1} }
    \paren{ 1+O(1)\frac{\log \Nrm (\idealp )}{\Nrm (\idealp )}\varepsilon }
.\end{equation}
The product of the first factors is $\frac{\totient (W)}{\Nrm (W\Ded )}$.
For the second factors, by \eqref{eq:log-N-over-N},
\eqref{eq:small-pi} and the fact that the number of prime ideals
$\idealp $ over a given $\pi \ded $ is at most $\therank $, for small $\varepsilon $ we have:
\begin{align}
    (\te{product of the second factors in \eqref{eq:product-small-idealp}})
    &=\prod _{(\pi ) \te{ with }\Nrm (\pi )\le w}
    \paren{1+O(1)\frac{\log \Nrm (\pi\ded )}{\Nrmpi }\varepsilon }^r
    \\
    &= 1+O(1)\therank \varepsilon \log _q w .
\end{align}
We apply this to $\varepsilon = \frac{1}{\log R}(-1+\xi \kyo )$, with $\xi =\xi _i $ or $\eta _i$
and to $\frac{1}{\log R}(-2+(\xi _i+\eta _i)\kyo )$
which are of size $O(\frac 1{\sqrt{\log R}})$.
It follows that if $\frac 1{\sqrt{\log R} }$ is smaller than $\frac{mr}{\log _q w}$ times an absolute constant, then the product of the products $\prod _{\idealp \te{ with }\Nrmpi \le w}$ in \eqref{eq:by-very-def-of-zeta} is of the form:
\begin{equation}\label{eq:product-of-products}
    \paren{\frac{\Nrm (W\Ded )}{\totient (W)}}^m
    \paren{1+3mrO(1)\frac{\log _q w}{\sqrt{\log R}} }
,\end{equation}
with $O(1)$ an absolute constant.
By \eqref{eq:by-very-def-of-zeta}--\eqref{eq:product-of-products}, we get an estimate:
\begin{multline}\label{eq:essentially-the-conclusion}
    \exieta = (1+mO_{\Ded }\paren{\frac{1}{\sqrt {\log R}}} )(1+mO_{\therank }\paren{\frac{1}{w\log w}}) (1+3m\therank O\paren{\frac{\log _q w}{\sqrt{\log R} } } )
    \\
    \cdot \paren{\frac{1}{\kappa _{\Ded}\log R } \frac{\Nrm (W\Ded )}{\totient (W)} }^m
    \cdot \prod_{i=1}^m\frac
    { \paren{1-\xi_i\kyo }
    \paren{1-\eta_i\kyo }
    }
    { 2-(\xi_i+\eta_i)\kyo
    }
.\end{multline}
The error factor above is a
$1+O_{m,\therank }\paren{\frac 1{w\log _q w} } + O_{\Ded ,\therank ,m}\paren{\frac{\log _q w}{\sqrt{\log R}}}$.

We are ready to compute \eqref{eq:convergence-and-estimate}.
Let us recall for the convenience of reference:
\[
    \eqref{eq:convergence-and-estimate}
    :=
    (\log R)^{2m}\int _{I^{2m}} \sumXiEta \paren{\prod _{i=1}^m \chihat (\xi _i )\chihat (\eta _i) }d\undl\xi d\undl\eta .
\]
\begin{proposition}\label{prop:one-step-away-from-th!}
    We have
    \begin{multline}\label{eq:one-step-away-from-th!}
        (\log R)^{2m}\int _{I^{2m}} \sumXiEta \paren{\prod _{i=1}^m \chihat (\xi _i )\chihat (\eta _i) }d\undl\xi d\undl\eta
    = \\
    \paren{
        C_\chi \frac{\log R}{\kappa _{\Ded }}\frac{\Nrm (W\Ded )}{\totient (W)}
        }^m
        \paren{
            1
            +O_{m,\therank ,\chi }
            \paren{
                \frac 1{w\log _q w}
            }
            +O_{\Ded , m,\therank ,\chi }
            \paren{
                \frac {w\log _q w}{\sqrt{\log R}}
            }
        }
        .
    \end{multline}
\end{proposition}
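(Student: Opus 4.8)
The plan is to feed the estimate \eqref{eq:essentially-the-conclusion} for $\sumXiEta$ into the integral \eqref{eq:convergence-and-estimate}, pull the (uniform) error factor outside the integral, factor the remaining integral as an $m$-th power of a two-dimensional one, and finally identify that two-dimensional integral with the constant $C_\chi$.

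First I would substitute \eqref{eq:essentially-the-conclusion} for $\sumXiEta$ inside \eqref{eq:convergence-and-estimate}. The error factor there, namely $1+O_{m,\therank}\paren{\tfrac1{w\log_q w}}+O_{\Ded,\therank,m}\paren{\tfrac{\log_q w}{\sqrt{\log R}}}$, is \emph{uniform} in $(\undl\xi,\undl\eta)\in I^{2m}$; this is the first point to verify, and it is clear from \S\ref{sec:Euler}, since that factor was obtained by applying \eqref{eq:pole-of-zeta} and \eqref{eq:small-pi} to complex numbers $\tfrac1{\log R}(1-\xi_i\kyo)$, $\tfrac1{\log R}(1-\eta_i\kyo)$, $\tfrac1{\log R}(2-(\xi_i+\eta_i)\kyo)$, each of magnitude $O(1/\sqrt{\log R})$ uniformly over $I$. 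Hence it may be taken out of the integral. Absorbing also the factor $(\log R)^{2m}\cdot(\kappa_{\Ded}\log R)^{-m}=(\log R)^m\kappa_{\Ded}^{-m}$ and $\bigpa{\Nrm(W\Ded)/\totient(W)}^m$, what remains to be evaluated is
\[
    \int_{I^{2m}}\prod_{i=1}^m\frac{(1-\xi_i\kyo)(1-\eta_i\kyo)}{2-(\xi_i+\eta_i)\kyo}\,\chihat(\xi_i)\,\chihat(\eta_i)\;d\undl\xi\,d\undl\eta .
\]
Its integrand is a product over $i$ of the single function $g(\xi,\eta):=\tfrac{(1-\xi\kyo)(1-\eta\kyo)}{2-(\xi+\eta)\kyo}\,\chihat(\xi)\,\chihat(\eta)$, which is continuous on the compact square $I^2$, so Fubini turns the integral into $\bigpa{\int_{I^2}g(\xi,\eta)\,d\xi\,d\eta}^m$.

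Next I would enlarge the domain from $I^2$ to $\bbR^2$. Since $\kyo$ is the imaginary unit, $|2-(\xi+\eta)\kyo|\ge 2$ for real $\xi,\eta$, hence $|g(\xi,\eta)|\le\tfrac12(1+|\xi|)(1+|\eta|)\,|\chihat(\xi)|\,|\chihat(\eta)|$; by the rapid decay of $\chihat$ recorded in Lemma \ref{lem:Fourier-textbook}, the function $\xi\mapsto(1+|\xi|)\,|\chihat(\xi)|$ is integrable on $\bbR$ and its integral over $\bbR\setminus I$ is $O_{A,\chi}\bigpa{(\log R)^{-A}}$ for every $A>0$. Consequently
\[
    \int_{I^2}g(\xi,\eta)\,d\xi\,d\eta \;=\; C_\chi + O_{A,\chi}\bigpa{(\log R)^{-A}},
    \qquad C_\chi:=\int_{\bbR^2}g(\xi,\eta)\,d\xi\,d\eta .
\]
To confirm that this $C_\chi$ is the positive real constant of \eqref{eq:def-of-C-chi}, I would plug in $\tfrac1{2-(\xi+\eta)\kyo}=\int_0^\infty e^{-(2-(\xi+\eta)\kyo)u}\,du$ (valid since the real part is $2>0$) and interchange the order of integration, which is legitimate by absolute convergence. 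Using $\int_\bbR\chihat(\xi)e^{\xi\kyo u}\,d\xi=e^u\chi(u)$ and differentiating once in $u$ (permissible by the decay of $\chihat$) gives $\int_\bbR(1-\xi\kyo)\chihat(\xi)e^{\xi\kyo u}\,d\xi=-e^u\chi'(u)$, so $C_\chi=\int_0^\infty e^{-2u}\bigpa{e^u\chi'(u)}^2\,du=\int_0^1\chi'(u)^2\,du\ge\bigpa{\chi(1)-\chi(0)}^2=1>0$ by Cauchy--Schwarz.

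Finally I would reassemble. Taking $A$ large and using $C_\chi>0$, one has $\bigpa{\int_{I^2}g}^m=C_\chi^m\bigpa{1+O_{m,\chi}((\log R)^{-A})}$, and $(\log R)^{-A}$ is dominated by $\tfrac{\log_q w}{\sqrt{\log R}}$, hence a fortiori by $\tfrac{w\log_q w}{\sqrt{\log R}}$, once $R$ and $w$ are large. Multiplying this back against the uniform error factor extracted in the first step produces exactly the right-hand side of \eqref{eq:one-step-away-from-th!}. The argument is in the end just bookkeeping of the error terms already manufactured in \S\ref{sec:Euler}; the only two places that require genuine care are the uniformity in $(\undl\xi,\undl\eta)$ that allows the error factor to be pulled out of the integral, and the Fubini-plus-differentiation manipulations in the closed-form evaluation of $C_\chi$ — both resting on the rapid decay of $\chihat$ from Lemma \ref{lem:Fourier-textbook}.
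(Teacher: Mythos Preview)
Your argument is correct and follows essentially the paper's proof: substitute \eqref{eq:essentially-the-conclusion}, pull the uniform error factor out of the integral, extend the domain of integration, and identify the resulting integral as $C_\chi^m$. The only difference is cosmetic---you factor the $I^{2m}$-integral as $\bigl(\int_{I^2}g\bigr)^m$ \emph{before} extending to $\bbR^2$, whereas the paper extends $I^{2m}\to\bbR^{2m}$ directly via a partition argument (its Lemma~\ref{lemma:estimate-int-F}) and factors afterward; you also spell out the Fourier computation yielding $C_\chi=\int_0^\infty\chi'(u)^2\,du$ that the paper merely cites at \eqref{eq:def-of-C-chi}.
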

\begin{proof}
By substituting \eqref{eq:essentially-the-conclusion} we get
\begin{multline}\label{eq:by-substituting}
\text{L.H.S.\ of }\eqref{eq:one-step-away-from-th!}
=
\paren{
    \frac{\log R}{\kappa _{\Ded } }\frac{\Nrm (W\Ded )}{\totient (W)}
}^{m}
\paren{
\int _{I^{2m}}
\prod _{i=1}^m
\frac{(1-\xi _i\kyo)(1-\eta _i \kyo )}{2-(\xi _i+\eta _i )\kyo }
\chihat (\xi _i )\chihat (\eta _i)
d\undl{\xi }d\undl{\eta }
}
\\
\cdot \paren{
    1+O_{m,r}\paren{
        \frac{1}{w\log _q w}
        }
    +O_{\Ded ,r,m}\paren{
        \frac{\log _q w }{\sqrt{\log R}}
        }
} 
\end{multline}
Write $F(\xi _i,\eta _i):= \frac{(1-\xi _i\kyo)(1-\eta _i \kyo )}{2-(\xi _i+\eta _i )\kyo }
\chihat (\xi _i )\chihat (\eta _i) $ and
$\undl F (\undl \xi ,\undl \eta ):= \prod_{i=1}^m F(\xi _i,\eta _i) $ for short.
The value above is estimated as:
\begin{align}\label{eq:formerly-I-was-fool}
    =
    \paren{
        \frac{\log R}{\kappa _{\Ded } }\frac{\Nrm (W\Ded )}{\totient (W)}
    }^{m}
    &\left[
        \int _{I^{2m}} \undl F(\undl \xi ,\undl \eta ) d\undl{\xi }d\undl{\eta }
    \right.
        \\
        &
        \left.
        +
        \int _{I^{2m}}
        \mgn{
            \undl F(\undl \xi ,\undl \eta )
            }
            d\undlxi d\undl{\eta }
            \paren{
                O_{m,r}\paren{
                    \frac{1}{w\log _q w}
                }
                +O_{\Ded ,r,m}
                \paren{
                    \frac{\log _q w }{\sqrt{\log R}}
                }
            }
    \right]
    .
\end{align}
We want to bound the integral $\int _{I^{2m}} |\undlF (\undlxi ,\undleta )| d\undlxi d\undleta $.
By Lemma \ref{lem:Fourier-textbook}, we know for any $A>0$
\[
    \chihat (\xi _i)=O_{\chi,A}((1+|\xi _i|)^{-A} )
\]
and therefore for any $A>0$
\begin{equation}\label{eq:bound-of-F}
    \mgn{
        F(\xi _i,\eta _i)
    }
    \le
    \frac 12 O_{\chi,A}( (1+|\xi _i |)^{-A}(1+|\eta _i |)^{-A} ).
\end{equation}
It follows that (by taking $A:= 2$ for example)
\begin{align}\label{eq:estimate-of-int-F-i}
    \int _{I^2} | F(\xi _i,\eta _i) | d\xi _i d\eta _i
    &= O_\chi (1)\int _{\mathbb R^2} | F(\xi _i,\eta _i) | d\xi _i d\eta _i
    \\
    & =O_\chi (1)\int _{\mathbb R^2} | (1+|\xi _i|)^{-2}(1+|\eta _i|)^{-2} | d\xi _i d\eta _i
    \\
    &=O_\chi (1)\paren{\int _{\mathbb R} | (1+|\xi |)^{-2} | d\xi _i d\eta _i }^2
\end{align}
is a finite value. Hence
$\int _{I^{2m}} |\undlF (\undlxi ,\undleta )| d\undlxi d\undleta
= \prod _{i=1}^m\int _{I} |F(\xi _i ,\eta _i) | d\xi _i d\eta _i
= O_{\chi }(1)
$ is also a finite value.

Next we consider the integral $\int _{I^{2m}} \undlF (\undlxi ,\undleta )d\undlxi d\undleta $.
We want to replace $\int _{I^{2m}}$ by $\int _{\mathbb R^{2m}}$.
Consider the following partition of the domain of integral:
\[ \mathbb R^{2m} = (I \sqcup (\mathbb R \setminus I ) )^{2m}
 = I^{2m } \sqcup \bigsqcup _{J } \Omega _{J } \]
 where $J $ runs through maps $\br{1,\dots ,2m} \to \br{I , \mathbb R \setminus I}$
 except the constant map into the one-point set $\br{I}$,
 and $\Omega _J$ denotes the corresponding product $\Omega _J := J_1 \times \dots \times J_{2m} \subset \mathbb R^{2m}$.

 \begin{lemma}\label{lemma:estimate-int-F}
     For any $J$ as above and any $A>0$, we have the estimate
     $
     \mgn{
         \int _{\Omega _J} \undlF (\undlxi ,\undleta )d\undlxi d\undleta
     }
     \le
     O_{\chi,A } ( (\log R)^{-A} )
     $.
 \end{lemma}

 \begin{proof}[Proof of Lemma \ref{lemma:estimate-int-F}]
    Since $J$ is not the constant function at $\br{I}$, there is an index $1\le k\le 2m$ such that $J_k = \bbR \setminus I$.
    By symmetry, we may assume $k=1$.
    By \eqref{eq:bound-of-F} \eqref{eq:estimate-of-int-F-i}, we have for any $A>2$
    \begin{align}
        \mgn{ \int _{\Omega _J} \undlF (\undlxi ,\undleta )d\undlxi d\undleta }
        &\le
        \int _{\Omega _J} d\undlxi d\undleta
        \left[
            O_{\chi,A}\paren{(1+|\xi _i|)^{-A}(1+|\eta _i|)^{-2}}\prod _{i=2}^m O_{\chi}\paren{ (1+|\xi _i|)^{-2}(1+|\eta _i|)^{-2} }
        \right]
        \\
        &=
        \paren{
            \int _{\bbR \setminus I} O_{\chi,A}\paren{ (1+|\xi _1|)^{-A}} d\xi
        }
        \cdot O_{\chi }(1)
        \\
        &= O_{\chi,A}\paren{
            \paren{\sqrt{\log R} }^{-A+1} }.
    \end{align}

    This completes the proof of Lemma \ref{lemma:estimate-int-F}.
 \end{proof}
By Lemma \ref{lemma:estimate-int-F}, we can proceed as:
\begin{align}\label{eq:we-can-proceed-as}
    \int _{\bbR ^{2m}} \undlF (\undlxi ,\undleta ) d\undlxi d\undleta
    &= \paren{\int _{I^{2m}} + \sum _{J} \int _{\Omega _J} }\undlF (\undlxi ,\undleta ) d\undlxi d\undleta
    \\
    &= \int _{I^{2m}}\undlF (\undlxi ,\undleta ) d\undlxi d\undleta
    + (2^{2m}-1) O_{\chi, A}\paren{ \paren{\log R}^{-A} }
    \\
    &= \int _{I^{2m}}\undlF (\undlxi ,\undleta ) d\undlxi d\undleta
    + O_{\chi, A,m}\paren{ \paren{\log R}^{-A} }
\end{align}
for any $A>0$.

Now that we have estimated the integrals in \eqref{eq:formerly-I-was-fool}
in \eqref{eq:estimate-of-int-F-i} and \eqref{eq:we-can-proceed-as},
we obtain for any $A>0$:
\begin{align}
    \text{L.H.S.\ of }\eqref{eq:one-step-away-from-th!}
    =
    \paren{\frac{\log R}{\kappa _\Ded }
        \frac{\Nrm (W\Ded )}{\totient (W)}
    }^m
    &
    \left[
        \int _{\mathbb R^{2m}} \undlF (\undlxi ,\undleta ) d\undlxi d\undleta
        +
        O_{\chi,A,m}\paren{ (\log R)^{-A}}
    \right.
        \\
        &
        \left.
            +
            O_{m,r,\chi }\paren{
                \frac 1{w\log _q w}
            }
            +O_{\Ded ,r,m,\chi }\paren{\frac{\log _q w}{\sqrt{\log R}}}
    \right]
    .
\end{align}
If we set $A=1/2$, the term $O_{\chi,A,m}\paren{ (\log R)^{-A}}$ can be absorbed into
$O_{\Ded ,r,m,\chi }\paren{\frac{\log _q w}{\sqrt{\log R}}}$.
The main term
$\int _{\mathbb R^{2m}} \undlF (\undlxi ,\undleta ) d\undlxi d\undleta
= \paren{\int _{\mathbb R^2} F(\xi ,\eta  ) d\xi d\eta }^m$
can be evaluated by
a standard Fourier analysis computation (e.g.\ \cite[p.170]{Tao} or \cite[Lemma 6.29]{KMMSY}): 
\begin{equation}\label{eq:def-of-C-chi}
    \int _{\bbR ^2} \frac{(1-\xi _i\kyo )(1-\eta _i\kyo )}{2-(\xi _i+\eta _i)\kyo }
    \chihat (\xi _i)\chihat (\eta _i) d\xi _i d\eta _i
    =
    \int _0 ^{+\infty } \chi '(x)^2 dx =: C_\chi .
\end{equation}
We conclude that
\[
    \text{L.H.S.\ of }\eqref{eq:one-step-away-from-th!}
    =
    \paren{\frac{\log R}{\kappa _\Ded }
        \frac{\Nrm (W\Ded )}{\totient (W)}
    }^m
    \left[
        (C_\chi )^{m}
            +
            O_{m,r,\chi }\paren{
                \frac 1{w\log _q w}
            }
            +O_{\Ded ,r,m,\chi }\paren{\frac{\log _q w}{\sqrt{\log R}}}
    \right]
    .
\]
This completes the proof of Proposition \ref{prop:one-step-away-from-th!}
\end{proof}

Let us collect the computations we have done and
finish the proof of the main result of this section.
\begin{proof}[Proof of Theorem \ref{thm:Goldston-Yildirim}]

We wanted to evaluate up to error the average:
\[ \Ex \left( \prod _{i=1}^m \Manga (W\phi _i (x)+b_i)^2  \emidd
x\in B
\right)
.
\]
By \eqref{eq:unfolded-expression} and \eqref{eq:expression-after-Fourier}, it equals:
\begin{multline}
    =\sum _{(\idealb _i ,\idealc _i )_i \in \Ideals ^{2m}} (\log R)^{2m}
    \left(
    \prod _{i=1}^m
    \mu (\idealb _i)\mu (\idealc _i)
    \chihatint{\idealb _i}{\xi _i}
    \right.
    \\
    \left.
    \cdot \chihatint{\idealc _i}{\eta _i}
    \right)
    \cdot \myep
    .
\end{multline}
By Proposition \ref{prop:convergence-and-estimate}, this has been estimated as
\begin{equation}
    =(\log R)^{2m}\int _{I^{2m}} \sumXiEta \paren{\prod _{i=1}^m \chihat (\xi _i )\chihat (\eta _i) }d\undl\xi d\undl\eta
    +
    O_{A,\chi ,m,\therank } ((\log R)^{-A})
\end{equation}
for any $A>0$.
By Proposition \ref{prop:one-step-away-from-th!},
this is further estimated as:
\[     =\paren{
    C_\chi \frac{\log R}{\kappa _{\Ded }}\frac{\Nrm (W\Ded )}{\totient (W)}
    }^m
    \paren{
        1
        +O_{m,\therank ,\chi }
        \paren{
            \frac 1{w\log _q w}
        }
        +O_{\Ded , m,\therank ,\chi }
        \paren{
            \frac {w\log _q w}{\sqrt{\log R}}
        }
    }
    +
    O_{A,\chi ,m,\therank } ((\log R)^{-A})
    .
\]
By setting $A:=1$, the last error term can be absorbed in $O_{\Ded , m,\therank ,\chi }
\paren{
    \frac {w\log _q w}{\sqrt{\log R}}
}$ in the parentheses.
(Note that $\frac{\Nrm (W\Ded )}{\totient (W)}\le 1$ regardless of the specific $W$.)

The proof of Theorem \ref{thm:Goldston-Yildirim} is thus completed.
\end{proof}

\section{\Cheb\ and the end of proof}\label{sec:end-of-proof}

As always, let $\Ded $ continue to be a Dedekind domain finitely generated over $\Fp $.
We restate Theorem \ref{thm:density} in a slightly broader generality.
In the number field case \cite{KMMSY}, the extra generality allowed one to prove a constellation theorem for prime-valued points on a binary quadratic form $ax^2+bxy+cy^2$ over $\bbZ $.

\begin{definition}
    Let $\ideala \subset \Ded $ be a non-zero ideal.
    Let us define the set $\pele $ of {\em prime elements of $\ideala $}
    by
    \begin{equation}
        \pele := \br{ \alpha \in \ideala \mid \ideala /\alpha\Ded \cong \Ded /\idealp \te{ as an $\Ded $-module for some maximal ideal $\idealp $} }
    .\end{equation}
\end{definition}
Note that $\mcal P_{\Ded }$ is exactly the set of prime elements of $\Ded $ because an isomorphism
$\Ded /\alpha\Ded \cong \Ded /\idealp $ of $\Ded $-modules forces the equality $\alpha\Ded = \idealp $.

Now we can state our main theorem in its proper generality.

\begin{theorem}\label{thm:homothetic}
    Let $\Ded $ be a Dedekind domain finitely generated over $\Fp $
    and $\ded \subset \Ded $ as in Proposition \ref{prop:choice-of-subring}.
    Let $\ideala \subset \Ded $ be a non-zero ideal and $S\subset \ideala $ a finite subset of it.
    Then any subset $A\subset \pele  $ of positive relative density
    contains a non-trivial $\ded $-homothetic copy of $S$.
\end{theorem}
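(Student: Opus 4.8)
The plan is to deduce Theorem~\ref{thm:homothetic} from the relative \Sz\ theorem (Theorem~\ref{thm:Sz}) in the same way as in \cite{KMMSY}, running the $W$-trick with the Goldston--Y{\i}ld{\i}r{\i}m estimate of Theorem~\ref{thm:Goldston-Yildirim} and the \Cheb\ density theorem. First I would reduce to the case that $S$ is a \emph{standard shape} in the sense of Definition~\ref{def:S-N-rho-o-pseudorandom}: replacing $S$ by $S-s_0$ for some $s_0\in S$ and then adjoining a finite $\ded$-generating set of $\ideala$ together with $0$ only strengthens the statement, since a non-trivial $\ded$-homothetic copy of the enlarged set contains one of the original $S$. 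So from now on $0\in S$, $S$ generates $\ideala$ over $\ded$, and $k:=\mgn S$; write $\delta_0:=\bar\delta_{\pele}(A)>0$, and fix once and for all a small $\varepsilon=\varepsilon(k)>0$ (e.g.\ $\varepsilon=1/(4k2^{k-1})$; note $k2^{k-1}$ bounds the number of factors appearing in \eqref{eq:S-N-rho-ded-pseudorandom}).

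Now the setup. One first commits to a density threshold $\delta=\delta(\Ded,\ideala,\chi,k,\delta_0)>0$, whose exact value is forced by the computation in the next paragraph, and feeds $\delta$ into Theorem~\ref{thm:Sz} to obtain $\rho,\gamma>0$. Only then does one pick $w$ large (depending on $\rho$, $S$, $\Ded$) and let $W\in\ded$ be a generator of the product of all primes of $\ded$ of norm $\le w$. By the \Cheb\ density theorem the prime elements of $\ideala$ are equidistributed among the $\totient(W)$ residue classes modulo $W\ideala$ that are coprime to $W$; a pigeonhole over these classes and then over scales produces a fixed $b\in\ideala$ coprime to $W$ for which $\{\alpha\in A\mid\alpha\equiv b\bmod W\ideala\}$ still occupies a positive fraction of $\pele\cap\ideala_{\le M}$ for infinitely many $M$. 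For each large $N$ put $R:=N^{\varepsilon}$,
\begin{equation*}
\lambda_N\colon\ideala\to\bbR_{\ge0},\qquad
\lambda_N(\beta):=\frac{\totient(W)}{\Nrm(W\Ded)}\cdot\frac{\kappa_{\Ded}}{C_\chi\log R}\cdot\Manga(W\beta+b)^2 ,
\end{equation*}
and $A':=\{\beta\in\ideala\mid W\beta+b\in A\}\subset\ideala$; the design of $\lambda_N$ is so that $\beta\in A'$ with $\Nrm((W\beta+b)\ideala^{-1})>R$ forces $\Manga(W\beta+b)=\log R$.

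The third step is to verify the hypotheses of Theorem~\ref{thm:Sz} for $\lambda=\lambda_N$, $A=A'$, for one suitably large $N$ drawn from the good scales. \emph{Pseudorandomness.} For a set $\Omega$ of pairs $(j,\omega)$, the maps $\phi_i:=\psi_{S,j_i}\circ\res_{\omega_i}\colon\ded^{k}\oplus\ded^{k}\to\ideala$ are surjective (so have finite cokernel), and $\ker\phi_i\not\supseteq\ker\phi_j$ for $i\ne j$: distinct pairs make $\res_\omega$ retain different standard coordinates, while every coefficient $s_i$ or $s_i-s_j$ occurring in $\psi_{S,j}$ is non-zero because the elements of $S$ are distinct with exactly one of them equal to $0$. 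Applying Theorem~\ref{thm:Goldston-Yildirim} to $\{\phi_i\}$ with all $b_i=b$ and using the normalization in $\lambda_N$, the quantity $\bbE(\prod_{(j,\omega)\in\Omega}(\lambda_N\circ\psi_{S,j}\circ\res_\omega)\mid\mcal B\times\mcal B)$ equals $1+O_{S,\Ded,\chi}(\tfrac1{w\log_q w})+O_{S,\Ded,\chi}(\tfrac{\log_q w}{\sqrt{\log R}})$, hence is within $\rho$ of $1$ once $w$, and then $N$, are large; so $\lambda_N$ is $(S,N_0,\rho,\ded)$-pseudorandom with $N_0$ of order $R^{2k2^{k-1}}$, and $N_0\le N$ by the choice of $\varepsilon$. \emph{Positive $\lambda_N$-density of $A'$.} For $\beta\in A'$ the element $W\beta+b$ is a prime element of $\ideala$, hence $(W\beta+b)\ideala^{-1}$ is a prime ideal of norm $\ge\lnorm{W\beta+b}/\Nrm(\ideala)$; all but $O_{\ideala,R}(1)$ of the $\beta\in A'\cap\ideala_{\le N}$ therefore satisfy $\Nrm((W\beta+b)\ideala^{-1})>R$, so $\Manga(W\beta+b)=\log R$ and $\lambda_N(\beta)=\totient(W)\kappa_{\Ded}\log R/(\Nrm(W\Ded)C_\chi)$. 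Combining the density of $\{\alpha\in A\mid\alpha\equiv b\}$, the \Cheb\ estimate $\mgn{\pele\cap\ideala_{\le M}}\gg_{\Ded}\mgn{\ideala_{\le M}}/\log M$, and the bounds \eqref{eq:size-of-ideala-N} at scales $N$ and $\lnorm W N$, one obtains
\begin{equation*}
\bbE\big(\lambda_N\,\ichi_{A'}\mid\ideala_{\le N}\big)\ \gg_{\Ded}\ \frac{\kappa_{\Ded}}{C_\chi}\cdot\frac{\log R}{\log(\lnorm W N)}\cdot\frac{\mgn{\ideala_{\le\lnorm W N}}}{\Nrm(W\Ded)\,\mgn{\ideala_{\le N}}}\cdot\delta_0 ,
\end{equation*}
where the Riemann--Roch-type identity $\lnorm W ^n=\Nrm(W\Ded)$ (Proposition~\ref{prop:choice-of-subring}) makes the displayed $W$-dependent factors cancel and $\log R/\log(\lnorm W N)\to\varepsilon$; so the left side exceeds a positive constant $\delta$ depending only on $\Ded,\chi,k,\delta_0$ --- this is the $\delta$ fixed at the outset, with no dependence on $w$, $W$ or $\rho$. \emph{Smallness.} On $A'$ one has $0\le\Manga(W\beta+b)\le\log R$, hence $\lambda_N^{\,k}=O_{W,\Ded,\chi}((\log N)^{k})$ there, so $\bbE(\lambda_N^{\,k}\ichi_{A'}\mid\ideala_{\le N})\le\gamma N$ for $N$ large. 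Theorem~\ref{thm:Sz} now yields a non-trivial $\ded$-homothetic copy $aS+\beta\subset A'$ with $a\in\ded\setminus\{0\}$, $\beta\in\ideala$; applying $x\mapsto Wx+b$ turns it into $(Wa)S+(W\beta+b)\subset A$ with $Wa\ne0$, and reversing the initial reduction completes the proof.

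The hard part is not any single ingredient --- Theorems~\ref{thm:Sz} and \ref{thm:Goldston-Yildirim} and the \Cheb\ density theorem are all available --- but the bookkeeping of quantifiers in the $W$-trick: the \Sz\ threshold $\delta$ must be chosen \emph{before} $w$ and $W$, yet it has to be a lower bound for a $\lambda_N$-density that at first sight depends on $W$. The way out, and the one place where the geometry of $\Ded$ really enters, is the identity $\lnorm W ^n=\Nrm(W\Ded)$ from Proposition~\ref{prop:choice-of-subring} (itself a Riemann--Roch consequence), which together with \eqref{eq:size-of-ideala-N} cancels all $W$-dependent quantities in the density computation; taking $R$ to grow polynomially in $N$ then converts the leftover $\log R/\log N$ into the constant $\varepsilon$. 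The only other point requiring (purely formal) care is the non-containment of the kernels $\ker(\psi_{S,j}\circ\res_\omega)$ needed to invoke Theorem~\ref{thm:Goldston-Yildirim}.
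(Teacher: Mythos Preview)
Your proposal is correct and follows essentially the same route as the paper: reduce to a standard shape, fix $\delta$ and hence $\rho,\gamma$ via Theorem~\ref{thm:Sz}, run the $W$-trick with pigeonhole to find $b$, define $\lambda$ from $\Manga(W\cdot+b)^2$, verify pseudorandomness via Theorem~\ref{thm:Goldston-Yildirim}, and check the density and smallness hypotheses. Your handling of the small-norm exceptions via $\Nrm(\alpha)\ge\lnorm\alpha$ is in fact a bit slicker than the paper's use of Corollary~\ref{cor:bound-for-elements-with-small-ideal-norm}.

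The one substantive step you gloss over is the lower bound $\mgn{\pele\cap\ideala_{\le M}}\gg_{\Ded}\mgn{\ideala_{\le M}}/\log M$, which you call ``the \Cheb\ estimate.'' \Cheb\ (Theorem~\ref{thm:Cheb}) only counts prime \emph{ideals} $\idealp$ with $\Nrm(\idealp)\le L$ and $[\idealp]=-[\ideala]$; passing from these to prime \emph{elements} inside the ball $\ideala_{\le M}$ requires knowing that a generator of $\idealp\ideala$ can be chosen with $\lnorm{-}$ controlled by $\Nrm(\idealp)^{1/n}$. This is exactly what the norm-length compatible $\Ded^*$-fundamental domain of Proposition~\ref{prop:norm-length-compatible-domain} provides, and the paper invokes it explicitly at this point. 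Without it the inequality you cite does not follow, so you should insert that ingredient into your density computation.
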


For the proof, we need to recall \Cheb 's density theorem.
This is by far the deepest imput from algebraic geometry in this work.
Let $\kurve $ be a complete non-singular geometrically irreducible curve over $\Fq $.
Let $\Pic (\kurve ) \xrightarrow {\phi }G$ be a finite quotient of $\Pic (\kurve )$.
The restriction of the degree map $\deg \colon \ker (\phi )\to \bbZ $ is necessarily non-trivial.
Let $D\ge 1 $ be the order of its cokernel.
(Let us always use $D$ in this sense when $G$ is understood.)
It follows that we have the degree map of the following form:
\begin{equation}
    \deg \colon G\to \bbZ / D\bbZ .
\end{equation}

The next result is a consequence of Weil's Riemann Hypothesis 
for algebraic curves over finite fields. 
\begin{theorem}[\Cheb 's density theorem]\label{thm:Cheb}
    Let $G$ be a finite quotient of $\Pic (\kurve )$
    and $P\in G$.
    Then for positive integers $n>0$, we have the following cardinality estimate:
    \begin{equation}
        \mgn{ \Bigl\{ x\in \kurve \mid \deg (x)=n\te{ in $\bbZ $ and }[x]=P \te{ in }G \Bigr\} }
        =
        \begin{cases}
            \displaystyle \frac{D}{\mgn G} \frac{q^n}n + O_{\kurve } \paren{\frac{q^{n/2}}n }
            & \te{ if } n\equiv \deg (P) \te{ in }\bbZ / D\bbZ ,
            \\[5mm]
            \displaystyle  O_{\kurve } \paren{\frac{q^{n/2}}n } & \te{ else. }
        \end{cases}
    \end{equation}
\end{theorem}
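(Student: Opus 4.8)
The plan is to deduce Theorem~\ref{thm:Cheb} from the Riemann Hypothesis for curves over finite fields by the classical argument: attach an $L$-function to each character of the finite abelian group $G$, estimate its coefficients via Weil's theorem, and recover the point count by Fourier inversion on $G$.

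\emph{Step 1 (setup and an elementary reduction).} For $\chi\colon G\to\bbC^{*}$ I would set $L(u,\chi):=\prod_{x\in\kurve}\bigl(1-\chi([x])\,u^{\deg(x)}\bigr)^{-1}$, which converges for $|u|<q^{-1}$, and expand $u\frac{d}{du}\log L(u,\chi)=\sum_{n\ge 1}a_{n}(\chi)\,u^{n}$ with $a_{n}(\chi)=\sum_{d\mid n}d\sum_{\deg(x)=d}\chi([x])^{n/d}$. Writing $T_{n}(\chi):=\sum_{\deg(x)=n}\chi([x])$, the $d<n$ terms of $a_{n}(\chi)$ are controlled using $|\chi([x])|=1$ together with $d\cdot\#\{x:\deg(x)=d\}\le\#\kurve(\mathbb F_{q^{d}})$: their total is bounded in absolute value by $\sum_{1\le d\le n/2}\#\kurve(\mathbb F_{q^{d}})=O_{\kurve}(q^{n/2})$ by a geometric-series estimate. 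Hence $n\,T_{n}(\chi)=a_{n}(\chi)+O_{\kurve}(q^{n/2})$ for every $\chi$, and the problem reduces to estimating $a_{n}(\chi)$.

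\emph{Step 2 (two kinds of characters).} The characters of $G$ split into the $D$ characters $\chi_{\zeta}(g)=\zeta^{\deg(g)}$ ($\zeta^{D}=1$) that factor through $\deg\colon G\to\bbZ/D\bbZ$, and the remaining $\mgn G-D$ characters. For $\chi_{\zeta}$ one has $L(u,\chi_{\zeta})=Z_{\kurve}(\zeta u)$, where $Z_{\kurve}(u)=\prod_{x}(1-u^{\deg(x)})^{-1}=L_{\kurve}(u)/\bigl((1-u)(1-qu)\bigr)$; geometric irreducibility forces the simple pole at $u=1$, and Weil's Riemann Hypothesis (\cite[Theorem~5.10, p.55]{Rosen}) gives $\deg L_{\kurve}=2g$ with all roots of magnitude $q^{-1/2}$, so $a_{n}(\chi_{\zeta})=\zeta^{n}\#\kurve(\mathbb F_{q^{n}})=\zeta^{n}q^{n}+O_{\kurve}(q^{n/2})$. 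For a character $\chi$ not of this form the key input is that $L(u,\chi)$ is a \emph{polynomial} in $u$, of degree $O_{\kurve}(1)$, whose reciprocal roots all have absolute value $q^{1/2}$ --- this is again the Riemann Hypothesis, now for the $L$-function of a nontrivial unramified character over the function field of $\kurve$ (such an $L$-function is a factor of the zeta function of a finite covering of $\kurve$, to which Weil's theorem applies). Consequently $a_{n}(\chi)=-\sum_{j}\beta_{j}^{n}=O_{\kurve}(q^{n/2})$, so that $T_{n}(\chi)=O_{\kurve}(q^{n/2}/n)$, while $T_{n}(\chi_{\zeta})=\zeta^{n}q^{n}/n+O_{\kurve}(q^{n/2}/n)$.

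\emph{Step 3 (Fourier inversion).} Finally I would invert:
\[
\mgn{\bigl\{x\in\kurve\;:\;\deg(x)=n,\ [x]=P\bigr\}}=\frac{1}{\mgn G}\sum_{\chi\in\widehat G}\overline{\chi(P)}\,T_{n}(\chi).
\]
The $\mgn G-D$ non-degree characters contribute $O_{\kurve}(q^{n/2}/n)$ in total, while the $\chi_{\zeta}$ contribute $\frac{1}{\mgn G}\cdot\frac{q^{n}}{n}\sum_{\zeta^{D}=1}\zeta^{n-\deg(P)}+O_{\kurve}(q^{n/2}/n)$, and the inner sum is $D$ when $n\equiv\deg(P)\pmod D$ and $0$ otherwise. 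This gives exactly the asserted dichotomy with main term $\frac{D}{\mgn G}\frac{q^{n}}{n}$. The only genuine difficulty is the black box invoked in Step~2 --- the rationality of $L(u,\chi)$ and the location of its zeros and poles for the non-degree characters, i.e.\ the Riemann Hypothesis for curves over $\mathbb F_{q}$; everything else is routine manipulation of Dirichlet series and orthogonality of characters, and I would simply cite \cite{Rosen} for the input.
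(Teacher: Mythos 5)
Your proposal is correct and is essentially the paper's own proof: the paper merely points to Rosen's argument for his Theorem 9.13B (adapted via \cite[Proposition 9.21]{Rosen} and \cite[Theorem 9.24]{Rosen}), and that argument is exactly what you carry out --- orthogonality of characters on $G$, the $D$ characters factoring through $\deg\colon G\to \bbZ /D\bbZ$ giving $L(u,\chi _\zeta )=Z_{\kurve }(\zeta u)$ and hence the main term $\frac{D}{\mgn G}\frac{q^n}{n}$ exactly when $n\equiv \deg (P)$, and Weil's Riemann Hypothesis making the remaining $L(u,\chi )$ polynomials of degree $O_{\kurve }(1)$ with inverse roots of modulus $q^{1/2}$, so that each such $T_n(\chi )=O_{\kurve }(q^{n/2}/n)$. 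The only detail you leave implicit (and which is routine, resting on surjectivity of $\deg \colon \Pic (\kurve )\to \bbZ $ for a geometrically irreducible curve) is that a character of $G$ not factoring through $\deg $ is automatically nontrivial on the image of $\Pic ^0(\kurve )$, which is what makes its $L$-function a polynomial.
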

\begin{proof}
    A slightly weaker version of this statement can be found in \cite[Theorem 9.13B, p.125]{Rosen}.
    Our statement can be obtained by the same argument
    by using \cite[Proposition 9.21, p.137]{Rosen} and \cite[Theorem 9.24, p.141]{Rosen} in place of \cite[Theorem 9.16B, p.129]{Rosen}.
\end{proof}

Via the transition of parameters from the degree $n=\deg (x) $ to the norm $L=q^{n}=:\Nrm (x) $, 
we get the following:
\begin{corollary}\label{cor:conseq-of-Cheb}
    For all sufficiently large positive numbers $L>0$, we have the following estimate:
    \begin{equation}
            \frac{1}{q^D} \frac{D}{\mgn G} \frac{L}{\log _q L}
            \le \mgn{ \Bigl\{ x\in \kurve  \Bigm| \Nrm (x) \le L \te{ and }[x]+P=0 \te{ in }G \Bigr\} }
            \le 2\frac{D}{\mgn G} \frac{L}{\log _q L}
    .\end{equation}
\end{corollary}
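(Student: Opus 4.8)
The plan is to convert the norm cutoff $\Nrm(x)\le L$ into a cutoff on degrees and then feed it into \Cheb's density theorem (Theorem \ref{thm:Cheb}). First I would set $N:=\lfloor\log_q L\rfloor$, so that for a closed point $x$ of $\kurve$ the inequality $\Nrm(x)=q^{\deg(x)}\le L$ is equivalent to $1\le\deg(x)\le N$. Decomposing the set to be counted according to $\deg(x)$, the quantity $\mgn{\br{x\in\kurve\mid \Nrm(x)\le L,\ [x]+P=0}}$ equals $\sum_{n=1}^{N}c_n$, where $c_n$ is the number of closed points of degree $n$ with $[x]=-P$ in $G$. Letting $n_0\in\bbZ/D\bbZ$ be the class of $\deg(-P)$ and applying Theorem \ref{thm:Cheb} with $-P$ in place of $P$, one gets $c_n=\frac{D}{\mgn G}\cdot\frac{q^{n}}{n}+O_{\kurve}\paren{\frac{q^{n/2}}{n}}$ if $n\equiv n_0\pmod D$ and $c_n=O_{\kurve}\paren{\frac{q^{n/2}}{n}}$ otherwise. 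Summing over $n$, the count equals $\tfrac{D}{\mgn G}\,S+E$, where $S:=\sum_{1\le n\le N,\ n\equiv n_0\,(D)}\tfrac{q^{n}}{n}$ and $E=O_{\kurve}\paren{\sum_{n=1}^{N}\tfrac{q^{n/2}}{n}}=O_{\kurve}(q^{N/2})=O_{\kurve}(\sqrt L)$, the last step because $q^{N/2}\le q^{(\log_q L)/2}=\sqrt L$.

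The second step is to sandwich $S$. Let $n_1$ be the largest integer with $1\le n_1\le N$ and $n_1\equiv n_0\pmod D$; it exists once $N\ge D$. Since $n\mapsto q^{n}/n$ is non-decreasing for $n\ge1$, the top term of $S$ dominates: keeping the two largest terms gives $S\ge(1+q^{-D})\,q^{n_1}/n_1$ (valid once $L\ge q^{2D}$, so that $n_1-D\ge1$), while summing the geometric-type tail $\sum_{j\ge0}q^{n_1-jD}/(n_1-jD)$ and bounding the $O(n_1)$ smallest terms crudely by $q^{n_1/2}$ apiece gives $S\le\paren{\tfrac{q^{D}}{q^{D}-1}+o(1)}\tfrac{q^{n_1}}{n_1}$ as $L\to\infty$. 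Now $q^{n_1}\le q^{N}\le L$ and $n_1\le N\le\log_q L$ bound $q^{n_1}/n_1$ from above, whereas $q^{N+1}>L$ together with $n_1\ge N-D+1>\log_q L-D$ bounds it from below, so that
\[
\frac1{q^{D}}\cdot\frac{L}{\log_q L}\;<\;\frac{q^{n_1}}{n_1}\;\le\;\frac{L}{\log_q L-D}\;=\;(1+o(1))\frac{L}{\log_q L}\qquad(L\to\infty),
\]
whence $(1+q^{-D})q^{-D}\,\tfrac{L}{\log_q L}\le S\le\paren{\tfrac{q^{D}}{q^{D}-1}+o(1)}\tfrac{L}{\log_q L}$.

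The last step is to combine these with the estimate on $E$. For the lower bound the count is $\tfrac{D}{\mgn G}S+E\ge\tfrac{D}{\mgn G}(1+q^{-D})q^{-D}\tfrac{L}{\log_q L}-O_{\kurve}(\sqrt L)$, and since $\sqrt L=o\paren{L/\log_q L}$ the surplus factor $(1+q^{-D})$ leaves room to absorb the error once $L$ is large, giving $\mgn{\br{x\in\kurve\mid \Nrm(x)\le L,\ [x]+P=0}}\ge\tfrac{D}{\mgn G}\cdot\tfrac1{q^{D}}\cdot\tfrac{L}{\log_q L}$. For the upper bound the count is $\tfrac{D}{\mgn G}S+E\le\tfrac{D}{\mgn G}\paren{\tfrac{q^{D}}{q^{D}-1}+o(1)}\tfrac{L}{\log_q L}+O_{\kurve}(\sqrt L)\le2\,\tfrac{D}{\mgn G}\cdot\tfrac{L}{\log_q L}$ for $L$ large, using $\tfrac{q^{D}}{q^{D}-1}\le2$. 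The main obstacle is precisely this concluding bookkeeping: one has to pin down what ``$L$ sufficiently large'' means so that a single threshold simultaneously dominates the Chebotarev error $O_{\kurve}(q^{n/2}/n)$ summed over $n\le N$, the discrepancy between $\log_q L$ and its floor $N$, and the discrepancy between $N$ and $n_1$; the one clean input behind all three is $\sqrt L=o(L/\log_q L)$.
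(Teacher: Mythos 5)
Your route is exactly the one the paper intends---the paper gives no details, presenting the corollary as a direct reparametrization of Theorem \ref{thm:Cheb}, and your decomposition by degree, with the congruent degrees producing the main sum $S$ and the Chebotarev errors totalling $O_{\kurve}(\sqrt L)$, is that computation; the lower-bound half of your argument is correct and complete. In the upper bound, however, there are two problems. First, the split you describe for $S$ (degrees $n>n_1/2$ treated as a geometric head, the $O(n_1)$ remaining terms bounded by $q^{n_1/2}$ apiece) does not yield $S\le\bigl(\frac{q^D}{q^D-1}+o(1)\bigr)\frac{q^{n_1}}{n_1}$: on that head you only have $\frac{n_1}{n}\le 2$, so the argument as written gives the constant $\frac{2q^D}{q^D-1}$, which exceeds $2$ for every value of $q^D$ and hence never closes the stated bound. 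The estimate you claim is true, but it needs a finer cut, e.g.\ take the head to be the largest $\lceil\sqrt{n_1}\rceil$ congruent degrees, where $\frac{n_1}{n}=1+o(1)$ uniformly, and bound each of the at most $n_1$ remaining terms by $q^{n_1-\sqrt{n_1}}$; that is a routine repair.

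Second, and more seriously, even with the sharp constant your closing step ``$\frac{q^D}{q^D-1}\le 2$'' fails in the borderline case $q^D=2$ (i.e.\ $q=2$, $D=1$): there $\frac{q^D}{q^D-1}=2$ exactly, and the two $(1+o(1))$ factors and the additive $O_{\kurve}(\sqrt L)$ error have nothing to be absorbed into. This is not just a bookkeeping defect of yours: the inequality with the constant exactly $2$ is false in that case. Take $\kurve=\bbP ^1$ over $\bbF _2$ and $G$ trivial, so $D=1$; at $L=2^N$ the set being counted is all closed points of degree $\le N$, of cardinality $\sum_{n\le N}\frac{2^n}{n}+O(2^{N/2})=\frac{2^{N+1}}{N}\bigl(1+\frac{1+o(1)}{N}\bigr)$, which exceeds $2\frac{L}{\log_2 L}=\frac{2^{N+1}}{N}$ for all large $N$, because the excess is of order $\frac{2^N}{N^2}\gg 2^{N/2}$ (already at $N=10$ one counts $227$ points against $204.8$). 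So for $q^D\ge 3$ your proof closes once the first point is repaired, while for $q^D=2$ the statement itself needs a little slack: any constant strictly larger than $2$ works with your argument, and that is all the paper uses downstream, since the upper bound only enters through the $O_{\Ded }(\cdot)$ of Corollary \ref{cor:bound-for-elements-with-small-ideal-norm}.
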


The following is clear from definitions.
\begin{lemma}\label{lem:clear-from-definitions}
    If $\alpha \in \ideala $ is a prime element, then:
    \begin{equation}
        0\le \Lambda _{R,\chi }^{\ideala } (\alpha ) \le \log R .
    \end{equation}
    The right-hand `$\le $' is an equality if $\Nrm (\alpha )\ge \Nrm (\ideala ) R $.
\end{lemma}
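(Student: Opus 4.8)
The plan is to evaluate $\Manga(\alpha)$ directly from Definition \ref{def:von-Mang-function}, once the ideal $\alpha\ideala^{-1}$ has been identified.

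First I would show that for a prime element $\alpha\in\ideala$ the ideal $\idealb:=\alpha\ideala^{-1}$ is in fact a maximal ideal $\idealp$ of $\Ded$, with $\Nrm(\idealp)=\Nrm(\alpha)/\Nrm(\ideala)$. As noted in the text, $\idealb$ is an integral ideal of $\Ded$, and $\alpha\Ded=\ideala\idealb$. Since $\ideala$ is invertible, the annihilator of the $\Ded$-module $\ideala/\alpha\Ded=\ideala/\ideala\idealb$ is exactly $\idealb$ (indeed $d\ideala\subseteq\ideala\idealb$ iff $d\in\idealb$). Comparing with the hypothesis $\ideala/\alpha\Ded\cong\Ded/\idealp$, whose annihilator is $\idealp$, forces $\idealb=\idealp$ --- the same reasoning as in the remark following the definition of $\pele$. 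Taking cardinalities gives $\Nrm(\idealp)=\Nrm(\alpha\Ded)/\Nrm(\ideala)$.

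Next, since the only ideals dividing the prime ideal $\idealp$ are $\Ded$ and $\idealp$, with $\mu(\Ded)=1$, $\chi(0)=1$ and $\mu(\idealp)=-1$, the definition of $\Manga$ gives
\[
  \Manga(\alpha)=\Mang(\idealp)=\log R\left(1-\chi\!\left(\frac{\log\Nrm(\idealp)}{\log R}\right)\right).
\]
Since $\chi$ takes values in $[0,1]$ and $\log R>0$ (here $R>1$), this immediately yields $0\le\Manga(\alpha)\le\log R$. For the equality assertion, the hypothesis $\Nrm(\alpha)\ge\Nrm(\ideala)R$ is, by the norm identity above, equivalent to $\Nrm(\idealp)\ge R$, i.e.\ $\frac{\log\Nrm(\idealp)}{\log R}\ge1$; since $\Supp\chi\subseteq[-1,1]$ and $\chi$ is continuous, $\chi$ vanishes on $[1,+\infty)$, whence $\chi(\frac{\log\Nrm(\idealp)}{\log R})=0$ and $\Manga(\alpha)=\log R$.

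I do not anticipate a genuine obstacle here: every step is an unwinding of definitions, and the only point needing a moment's thought --- the identification $\alpha\ideala^{-1}=\idealp$ --- is precisely the invertibility/annihilator argument already recorded in the text for the case $\ideala=\Ded$.
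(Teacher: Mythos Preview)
Your argument is correct and is precisely the unwinding the paper has in mind when it says the lemma is ``clear from definitions''; the paper gives no separate proof. Your identification $\alpha\ideala^{-1}=\idealp$ via annihilators, the resulting formula $\Manga(\alpha)=\log R\bigl(1-\chi(\log\Nrm(\idealp)/\log R)\bigr)$, and the use of $\Supp\chi\subset[-1,1]$ together with continuity at the endpoint are exactly right.
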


When we apply Lemma \ref{lem:clear-from-definitions}, it will be convenient to have a bound for the number of
elements $\alpha \in \ideala $
with $\Nrm (\alpha )< \Nrm (\ideala ) R $.
For $L\ge 1$, let us write
\begin{equation}
    \ideala (L):= \br{\alpha \in \ideala \mid \Nrm (\alpha )\le L }.
\end{equation}

\begin{corollary}\label{cor:bound-for-elements-with-small-ideal-norm}
    For positive real numbers $L>1$ and $M>1$, we have the bound
    \begin{equation}
        \mgn{ \pele \cap \ideala (\Nrm (\ideala ) L)\cap \ideala _{\le M}}
        = O_{\Ded }\paren{(\log M)^{n-1}\frac L{\log _q L}}.
    \end{equation}
\end{corollary}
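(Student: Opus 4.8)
The plan is to sort the prime elements counted on the left according to the maximal ideal each of them singles out, and then to bound separately how many maximal ideals can occur and how many elements lie over each one. First I would note that if $\alpha\in\ideala$ is a prime element, so that $\ideala/\alpha\Ded\cong\Ded/\idealp$ for some maximal ideal $\idealp$, then in fact $\alpha\Ded=\ideala\idealp$ with $\idealp$ uniquely determined by $\alpha$: writing $\alpha\Ded=\ideala\idealb$, which is possible since $\alpha\Ded\subset\ideala$ and $\ideala$ is invertible in the Dedekind domain $\Ded$, gives $\Ded/\idealb\cong\ideala/\ideala\idealb=\ideala/\alpha\Ded\cong\Ded/\idealp$, whence $\idealb=\idealp$. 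Taking ideal norms, $\Nrm(\alpha)=\Nrm(\ideala)\Nrm(\idealp)$, so the constraint $\Nrm(\alpha)\le\Nrm(\ideala)L$ is exactly $\Nrm(\idealp)\le L$. Partitioning by $\idealp$,
\[
    \mgn{\pele\cap\ideala(\Nrm(\ideala)L)\cap\ideala_{\le M}}
    =\sum_{\idealp\colon\Nrm(\idealp)\le L}\mgn{\br{\alpha\in\ideala_{\le M}\mid\alpha\Ded=\ideala\idealp}}.
\]

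For the inner term, fix $\idealp$ with $\Nrm(\idealp)\le L$. If $\ideala\idealp$ is not principal the set is empty; otherwise $\ideala\idealp=\pi_0\Ded$ and $\br{\alpha\mid\alpha\Ded=\ideala\idealp}=\pi_0\Ded^*$, so the inner set is $(\pi_0\Ded^*)\cap\ideala_{\le M}\subseteq(\pi_0\Ded^*)\cap\Ded_{\le M}$. When $M\ge q$ this has $O_{\Ded}((\log M)^{n-1})$ elements by the corollary to Proposition \ref{prop:number-of-associates}, applied to any member of the orbit that lies in $\Ded_{\le M}$ (there being nothing to prove if there is none). When $1<M<q$ the set $\ideala_{\le M}$ is contained in the fixed finite set $\Ded_{\le q}$, so the inner term is $O_{\Ded}(1)$; and since a prime element satisfies $\Nrm(\alpha)\ge\Nrm(\idealp)\ge q$ and hence $\lnorm\alpha\ge\Nrm(\alpha)^{1/n}\ge q^{1/n}$, the inner set can be nonempty only for $M\ge q^{1/n}$, where $(\log M)^{n-1}\ge((\log q)/n)^{n-1}>0$, so again $O_{\Ded}(1)=O_{\Ded}((\log M)^{n-1})$ in the relevant range. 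In all cases the inner term is $O_{\Ded}((\log M)^{n-1})$.

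It remains to count the maximal ideals $\idealp\subset\Ded$ with $\Nrm(\idealp)\le L$. These are the closed points of the open subscheme $\Spec\Ded\subset\kurve$, with $\Nrm(\idealp)$ matching the norm of the corresponding point, so their number is at most $\mgn{\br{x\in\kurve\mid\Nrm(x)\le L}}$, which Corollary \ref{cor:conseq-of-Cheb}, applied to the trivial finite quotient $G$ of $\Pic(\kurve)$, bounds by $O_{\Ded}(L/\log_q L)$ once $L$ is large enough; for $L$ below that threshold there are only $O_{\Ded}(1)$ such ideals, and $L/\log_q L$ is bounded below by a positive absolute constant for all $L>1$, so the bound $O_{\Ded}(L/\log_q L)$ persists. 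Multiplying this with the per-ideal estimate of the previous step yields $\mgn{\pele\cap\ideala(\Nrm(\ideala)L)\cap\ideala_{\le M}}=O_{\Ded}((\log M)^{n-1}L/\log_q L)$, as claimed.

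I do not expect any real obstacle here: the two substantive ingredients---the lattice-point bound of Proposition \ref{prop:number-of-associates} and \Cheb 's density theorem in the form of Corollary \ref{cor:conseq-of-Cheb}---are already available, and the only point needing care is making the implied $O_{\Ded}$-constants uniform across the degenerate ranges of small $L$ and small $M$.
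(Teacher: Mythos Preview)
Your proof is correct and follows essentially the same approach as the paper: map each prime element $\alpha$ to the prime ideal $\idealp=\alpha\ideala^{-1}$, bound the number of admissible $\idealp$ via \Cheb\ (Corollary \ref{cor:conseq-of-Cheb}), and bound each associate class via Proposition \ref{prop:number-of-associates}. The only cosmetic difference is that the paper invokes Corollary \ref{cor:conseq-of-Cheb} with $G=\Pic(\Ded)$ and $P=[\ideala]$ (using that $\idealp$ must lie in the fixed class $-[\ideala]$), whereas you take $G$ trivial and count all primes of norm $\le L$; both give $O_{\Ded}(L/\log_q L)$, and your handling of the degenerate ranges of small $L$ and $M$ is more careful than the paper's.
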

\begin{proof}
    Given $\alpha \in \pele \cap \ideala (\Nrm (\ideala ) L)$, the ideal $\idealp := \alpha \ideala\inv \subset \Ded $ is a prime ideal
    whose class in $\Pic (\Ded )$ equals $-[\ideala ]$ and norm equals $\Nrm (\alpha )/\Nrm (\ideala )$ ($\le L$).
    Two elements $\alpha ,\alpha '$ give the same $\idealp $ if and only if they are associate to each other.
    By Corollary \ref{cor:conseq-of-Cheb}, it follows that there are at most $2\frac{D}{\mgn{\Pic (\Ded )}}\frac{L}{\log _q L}$
    associate classes inside $\pele \cap \ideala (\Nrm (\ideala )L) \cap \ideala _{\le M}$.
    By Proposition \ref{prop:number-of-associates} applied to $\ideala _{\le M}\subset \Ded _{\le M}$, each associate class contains at most $O_{\Ded }((\log M)^{n-1})$ elements.
    This completes the proof.
\end{proof}

\subsection{Proof of the main result} 

Now we are ready to prove our main result.
\begin{proof}[Proof of Theorem \ref{thm:homothetic}]
    As always set $\spDed =\Spec \Ded $ and let $\kurve $ be its non-singular compactification.
    Let $\ded =\Fq [t] \subset \Ded $ be as in Proposition \ref{prop:choice-of-subring}.
    Recall from \eqref{eq:some-quantities} the definitions of some integer quantitites:
    \begin{equation}
        \begin{array}{cl}
            \integerparameters
        .\end{array}
    \end{equation}
    Also, let $\chi \colon \bbR \to [0,1]$ be a compactly supported $C^\infty $ function as in Definition \ref{def:von-Mang-function}.
%
    Take a norm-length compatible $\Ded ^*$-fundamental domain $\domain $ of $\ideala \nonzero $ which exists thanks to Proposition \ref{prop:norm-length-compatible-domain}.
    This means that there is a small positive number $c_{\domain }>0$ such that the following inclusion holds
    for all $M>0$:
    \begin{equation}\label{eq:norm-length-compatibility-again}
        \ideala  (c_{\domain }M^n) \cap \domain \subset \ideala _{\le M } .
    \end{equation}
    Let $\delta >0$ be any positive number smaller than the upper density $\bar\delta _{\pele }(A)$ of $A\subset \pele $.
    Let $\delta _1$ be the positive number defined by \eqref{eq:def-of-delta1} below (which is not very motivating) depending only on the preliminary data
    $\Ded $, $\ideala $, $\chi $, $S$, $\delta $ and $\domain $ that are already available.
    Using the relative \Sz\ theorem \ref{thm:Sz}, we fix the following positive numbers:
    \begin{equation}
        \rho := \rho _{}(\ded ,\ideala ,S, \delta _1),
        \quad
        \gamma := \gamma _{}(\ded ,\ideala ,S,\delta _1).
    \end{equation}

    Let $w>1$ be a large integer to be specified in a moment
    and $R>1$ be a large real number to be specified much later, satisfying
    \begin{equation}\label{eq:R-large-depending-on-w}
         R > w^{\therank }
    .\end{equation}
    Recall $k:=\mgn S$ and consider the maps
    in Definition \ref{def:S-N-rho-o-pseudorandom}:
    $\psi _{S,j}\circ \res _\omega \colon \ded ^k\oplus \ded ^k \surj \Ded $ for $1\le j\le k$ and $\omega \colon \br{1,\dots ,\hat j ,\dots ,k}\to \br\pm $.
    The number of indices $(j,\omega )$ is
    \begin{equation}
        m:= k2^{k-1}.
    \end{equation}
    It is routine to check that this family of maps
    satisfies the hypothesis of Theorem \ref{thm:Goldston-Yildirim}; see \cite[Lemma 5.8]{KMMSY} for details.
    Hence if $w$ is large enough depending on $S\subset \ideala $ and $\therank $, and if $R$ is large enough depending in addition on $\chi $ and $w$,
    then the error terms in Theorem \ref{thm:Goldston-Yildirim} can be made smaller than $\rho $:
    \begin{equation}\label{eq:conseq-GY}
        \mgn{ O_{m,r}\paren{\frac{1}{w\log _q w}} + O_{\chi,m,\therank ,\Ded }\paren{\frac{\log _q w}{\sqrt{\log R} }}
        }
        < \rho .
    \end{equation}
    We fix such $w $. The value of $R$ is yet to be fixed.

    Set
        $W:=\prod _{\Nrm (\pi\ded )\le w} \pi \in \ded $,
    where the product $\prod _{\Nrm (\pi\ded )\le w}$ is taken over the monic irreducible polynomials $\pi $ satisfying the indicated condition.

    Let $\param > 1$ be a large positive integer to be specified toward the end of the proof. 
    We consider the following positive real numbers determined by $\param $:
    \begin{equation}\label{eq:def-of-our-parameters}
        M= q^\param , \quad
        L= \frac{c_{\domain }M^n}{\Nrm (\ideala )}, \quad
        N= M/\lnorm W ,\quad
        R=N^{1/(2m+1)}.
        \end{equation}

    Since $\delta <\bar \delta _{\pele }(A)$ by our choice, 
    for infinitely many $\param \in \bbN $ the following inequality holds:
    \begin{equation}\label{eq:for-infinitely-many-d}
        \mgn {A \cap \ideala _{\le M} }
         > \delta \cdot \mgn{ \pele \cap \ideala _{\le M} }
    .\end{equation}
    By \eqref{eq:norm-length-compatibility-again}, 
    the set $\ideala _{\le M}$ contains $\ideala (\Nrm (\ideala ) L)\cap \domain $.
    Hence the right hand side is at least:
    \begin{equation}
        \ge
        \delta \cdot \mgn {\pele \cap \ideala (\Nrm (\ideala ) L 
        ) \cap \domain }
    .\end{equation}
    For every element $\alpha \in \pele \cap
        \ideala ( \Nrm (\ideala )L )
    \cap \domain $,
    the ideal $\alpha \ideala\inv \subset \Ded $ is a prime ideal
    with norm $\Nrm (\alpha )/\Nrm (\ideala )$ and whose class in $\Pic (\Ded )$ equals $-[\ideala ]$.
    Therefore the association $\alpha \mapsto \alpha\ideala\inv $ establishes a bijection from
    $\pele \cap 
        \ideala ( \Nrm (\ideala )L )
    \cap \domain $
    to the following set:
    \begin{equation}\label{eq:bounded-also-from-below}
     \Bigbr{ \idealp \in |\Spec (\Ded ) | \Bigm|
     \Nrm (\idealp )\le L  \te{ and }[\idealp ]+[\ideala ]=0\te{ in }\Pic (\Ded ) }
    .\end{equation}
    Its cardinality is already estimated in Corollary \ref{cor:conseq-of-Cheb}.
    As a result we get:
    \begin{equation}\label{eq:L/N(a)}
        \mgn{A\cap \ideala _{\le M}}> \delta\cdot\frac 1{q^D}\frac{D}{\mgn{ \Pic (\Ded )} }\frac{L}{\log _q L }
        =: \delta\cdot C_{\Ded }\frac{L}{\log _q L}
    ,\end{equation}
    where we have written $C_{\Ded }:= \frac 1{q^D}\frac{D}{\mgn{ \Pic (\Ded )} } $ for short.

    Since we want to use Lemma \ref{lem:clear-from-definitions} later, we want to consider only those elements with ideal norm $>\Nrm (\ideala ) R$.
    By Corollary \ref{cor:bound-for-elements-with-small-ideal-norm} we know
    \begin{equation}
        \mgn{\ideala (\Nrm (\ideala ) R )\cap \ideala _{\le M} } = O_{\Ded }\paren{(\log M)^{n-1}\frac{ R}{\log R} }.
    \end{equation}
    By \eqref{eq:def-of-our-parameters} the right hand side has the order of $(\log L)^{n-1} L^{1/(2m+1)n} $ or less as a function of $\param $,
    which is smaller than the right-most term of \eqref{eq:L/N(a)}.
    Hence by replacing $\delta $ by a slightly smaller value if necessary, we see that the following variant of \eqref{eq:L/N(a)} is valid:
    \begin{equation}\label{eq:L/N(a)-bis}
        \mgn{A\cap
        \paren{
            \ideala _{\le M}\setminus \ideala (\Nrm (\ideala ) R)
            }
        }
        >
        \delta\cdot C_{\Ded }\frac{L}{\log _q L}
    .\end{equation}
    \begin{lemma}\label{lem:trimming-small-alpha}
        For every $\alpha \in \pele \setminus \ideala (\Nrm (\ideala ) R )$, 
        the residue class of $\alpha $ in $\ideala /W\ideala $ generates it as an $\Ded $-module.
    \end{lemma}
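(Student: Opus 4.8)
The plan is to translate $\alpha\in\pele$ into a statement about the ideal $\idealp:=\alpha\ideala^{-1}$ and then reduce the generation claim to checking that $\alpha$ avoids the prime ideals dividing $W\Ded$. First I would observe that $\idealp:=\alpha\ideala^{-1}$ is an integral ideal of $\Ded$ (since $\alpha\in\ideala$), so $\alpha\Ded=\idealp\ideala$. Tensoring the short exact sequence $0\to\idealp\ideala\to\ideala\to\ideala/\alpha\Ded\to 0$ with the invertible (hence flat) module $\ideala^{-1}$ yields $(\ideala/\alpha\Ded)\otimes_\Ded\ideala^{-1}\cong\Ded/\idealp$; by hypothesis $\ideala/\alpha\Ded\cong\Ded/\idealp_0$ for some maximal ideal $\idealp_0$, so comparing annihilators forces $\idealp=\idealp_0$. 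In particular $\idealp$ is maximal and $\Nrm(\alpha)=\Nrm(\idealp)\Nrm(\ideala)$, so the hypothesis $\alpha\notin\ideala(\Nrm(\ideala)R)$ says precisely that $\Nrm(\idealp)>R$. (For $\ideala=\Ded$ this step is trivial, as noted just after the definition of $\pele$.)

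Next I would invoke the Chinese Remainder Theorem and Nakayama's lemma. Writing $W\Ded=\prod_{\mathfrak q}\mathfrak q^{e_{\mathfrak q}}$ for the prime factorization, the CRT gives $\ideala/W\ideala\cong\prod_{\mathfrak q}\ideala/\mathfrak q^{e_{\mathfrak q}}\ideala$; each factor is a module over the Artinian local ring $\Ded/\mathfrak q^{e_{\mathfrak q}}$, so by Nakayama the residue class of $\alpha$ generates $\ideala/\mathfrak q^{e_{\mathfrak q}}\ideala$ over $\Ded$ exactly when $\alpha$ is nonzero in the one-dimensional $\Ded/\mathfrak q$-vector space $\ideala/\mathfrak q\ideala$, i.e.\ when $\alpha\notin\mathfrak q\ideala$. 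Hence the lemma reduces to showing $\alpha\notin\mathfrak q\ideala$ for every prime $\mathfrak q$ dividing $W\Ded$.

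To conclude, I would bound the norms of the primes $\mathfrak q\mid W\Ded$. Any such $\mathfrak q$ lies over some $(\pi)\subset\ded$ with $\pi\mid W$, so $\Nrm(\pi\ded)\le w$; since $\Ded$ is free of rank $\therank$ over $\ded$, reducing modulo $\pi$ and counting $\ded/(\pi)$-dimensions shows that the residue degree of $\mathfrak q$ over $(\pi)$ is at most $\therank$, whence $\Nrm(\mathfrak q)\le w^{\therank}<R$ by \eqref{eq:R-large-depending-on-w}. Together with $\Nrm(\idealp)>R$ from the first step this gives $\mathfrak q\neq\idealp$. Since $\mathfrak q\ideala$ is an ideal, $\alpha\in\mathfrak q\ideala$ would force $\idealp\ideala=\alpha\Ded\subseteq\mathfrak q\ideala$, i.e.\ $\idealp\subseteq\mathfrak q$, i.e.\ $\idealp=\mathfrak q$ as both are maximal --- contradicting $\mathfrak q\neq\idealp$. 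So $\alpha\notin\mathfrak q\ideala$ for all such $\mathfrak q$, and by the previous paragraph $\alpha$ generates $\ideala/W\ideala$, as claimed.

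I do not anticipate a real obstacle: the argument is standard commutative algebra over a Dedekind domain, plus the elementary inequality $\Nrm(\mathfrak q)\le w^{\therank}$ for the relevant primes. The only point that requires a little care is the first step when $\ideala\neq\Ded$, where one must keep track of the twist by $\ideala$ rather than work directly with $\alpha\Ded$.
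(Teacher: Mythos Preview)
Your argument is correct and is essentially the paper's own proof, just with more of the commutative-algebra details spelled out: the paper compresses your CRT/Nakayama reduction into the single assertion that generating $\ideala/W\ideala$ is equivalent to $\alpha\notin\bigcup_{\idealp\mid W}\idealp\ideala$, and then argues by contradiction exactly as you do, using that $\alpha\in\idealp\ideala$ with $\alpha\in\pele$ forces $\alpha\ideala^{-1}=\idealp$ and hence $\Nrm(\alpha)\le\Nrm(\ideala)w^{\therank}<\Nrm(\ideala)R$.
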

        \begin{proof}[Proof of Lemma]
            By Chinese Remainer Theorem for $\Ded $-modules,
            the assertion is equivalent to that
            $\alpha \in \ideala \setminus
            (\bigcup
            _{\idealp |W}
            \idealp \ideala )$.
            Suppose there is a $\idealp |W $ such that $\alpha \in \idealp\ideala $.
        Since $\alpha \in \pele $ it follows that $\alpha =\ideala \idealp $.
        By the definition of $W$ the ideal $\pi\ded =\idealp \cap\ded $
        has norm $\le w $. It follows that $\Nrm (\idealp )\le w^\therank $
        and hence
        \begin{equation}\label{eq:which-cannot-happen}
            \Nrm (\ideala ) R < \Nrm (\alpha )\le \Nrm (\ideala ) w^r
            .
        \end{equation}
        This contradicts the assumption \eqref{eq:R-large-depending-on-w}.
        This proves Lemma \ref{lem:trimming-small-alpha}.
        \end{proof}

    As $\ideala $ is a rank 1 projective $\Ded $-module, we have an isomorphism of $\Ded $-modules $\ideala /W\ideala \cong \Ded /W\Ded $.
    The generators of $\ideala /W\ideala $ correspond to the elements of $(\Ded /W\Ded )^*$.
    It follows that:
    \begin{equation}
        \mgn{\br{ \alpha \in \ideala /W\ideala \mid \alpha \te{ generates }\ideala /W\ideala } }
        = \totient (W).
    \end{equation}
    By Lemma \ref{lem:trimming-small-alpha}, we see that the set $A \cap \paren{\ideala _{\le M} \setminus \ideala (\Nrm (\ideala ) R ) }$
    decomposes into the sum of $\totient (W)$ disjoint subsets according to the mod $W$ classes.
    By the pigeonhole principle, it follows that for some residue class $[b]\in \ideala /W\ideala $ we have
    \begin{equation}\label{eq:pigeonhole}
        \mgn{ \br{\alpha \in A\cap
            \paren{
            \ideala _{\le M}\setminus\ideala (\Nrm (\ideala ) R )
            }
        \mid \alpha = [b] \te{ in }\ideala /W\ideala } }
        \ge
        \frac 1{\totient (W)}\cdot (\te{R.H.S of}\eqref{eq:L/N(a)-bis})
    .\end{equation}
    Choose one such $[b]\in\ideala/W\ideala $.
    Let us fix a $C>0$ depending only on $\ideala $ and $W$ such that the projection $\ideala _{\le C} \to \ideala /W\ideala $ is surjective (which exists because the target is a finite set)
    and choose a lift $b\in \ideala _{\le C}$ of $[b]$.
%
    Let $\Aff _{W,b}\colon \ideala \to \ideala $ be the affine linear map $\alpha \mapsto W\alpha +b$.
    Set
    \begin{equation}
        B:= \Aff _{W,b}\inv (A\setminus\ideala (\Nrm (\ideala ) R) )\subset \ideala  .
    \end{equation}
    We have the following inclusion if $\param >1$ is large enough:
    \begin{equation}\label{eq:inclusion-Aff-Aff}
        \Aff _{W,b}(\ideala )\cap (\ideala _{\le M}) \subset \Aff _{W,b}(\ideala _{\le N})
        .
    \end{equation}
    Indeed, suppose $\alpha \in \ideala $ satisfies $\lnorm{W\alpha +b} \le { M} = { N\lnorm W }$.
    Since $\lnorm b \le C < N\lnorm W $ for $\param $ large enough, by the ultrametricity of $\lnorm -$ this implies $\lnorm{W\alpha }\le N\lnorm W$.
    We get $\lnorm \alpha \le N$ because $W$ is a multiplicative element for the norm $\lnorm -$.
    This proves the inclusion \eqref{eq:inclusion-Aff-Aff}.
    In particular the set on the left hand side of \eqref{eq:pigeonhole} is contained in $\Aff _{W,b} (B\cap \ideala _{\le N})$.

    Having fixed $W$ and $b$, we can finally define a function $\lambda \colon \ideala \to \bbR _{\ge 0}$ by the formula:
    \begin{equation}\label{eq:def-of-lambda}
        \lambda (\alpha ) := \frac 1{\log R} \frac{\totient (W)}{\lnorm W ^n} \frac{\kappa _\Ded }{C_\chi } \Manga (W\alpha +b )^2.
    \end{equation}
    By \eqref{eq:conseq-GY} the function $\lambda $ is $(R^{2m}/q,\rho ,S,\ded )$-pseudorandom.
    Let us verify the other hypotheses in the \Sz\ theorem \ref{thm:Sz}.

    By Lemma \ref{lem:clear-from-definitions}, the restriction of $\lambda $ to $B$ equals the constant function $\frac{\totient (W)}{\lnorm W ^n} \frac{\kappa _\Ded }{C_\chi } \log R $.
    This together with
    \eqref{eq:L/N(a)-bis}
    and
    \eqref{eq:inclusion-Aff-Aff}
    implies:
    \begin{align}
        \bbE \Bigpa{
            \lambda\ichi _B  \Bigm|
            \ideala _{\le N}
        }
        &\ge \left. \frac{1}{\totient (W)}\delta
        C_{\Ded }
        \frac{L}{\log _q L }
        \cdot \frac{\totient (W)}{\lnorm W ^n}\frac{\kappa _\Ded }{C_\chi }\log R
        \right/
        \mgn{ \ideala _{\le N} }.
    \end{align}
We have $\mgn{ \ideala  _{\le N} }\le M^n / \lnorm W ^n \Nrm (\ideala )q^{g-1}$ (which is an equality if $N$ happens to be a power of $q^d$).
By the definition \eqref{eq:def-of-our-parameters} of our parameters
we get for all sufficiently large $\param > 1$:
\begin{equation}\label{eq:def-of-delta1}
    \ge \frac 1 2 \delta C_{\Ded }
    \frac{\kappa _\Ded }{C_\chi }
    \frac{c_\domain q^{g-1}\log (q) }{n(2m+1)}
    =: \delta _1.
\end{equation}
This establishes one of the two requirements in the relative \Sz\ theorem \ref{thm:Sz}.

We have to establish one more inequality to invoke the relative \Sz\ theorem. 
By Lemma \ref{lem:clear-from-definitions} we have
$\lambda ^k\ichi _B  \le \mathrm{const.}\cdot (\log R)^{2k} $
where the constant comes from the coefficient in the definition of $\lambda $ in \eqref{eq:def-of-lambda}. So:
\begin{equation}\label{eq:smallness}
    \bbE \Bigpa{ \lambda ^k\ichi _B  \Bigm| \Ded _{\le N} }
    \le \mathrm{const.}\cdot \paren{\frac{1}{2m+1}\log N }^{2k} ,
\end{equation}
which is $< \gamma N $ for $\param $ 
sufficiently large.

Now fix $\param $ so that it satisfies \eqref{eq:for-infinitely-many-d} and
is large enough to make all the above inequalities true. 
We can apply the relative \Sz\ theorem \ref{thm:Sz} to the current situation
by \eqref{eq:def-of-delta1}, \eqref{eq:smallness} and the pseudorandomness of $\lambda $.
It follows that $B$ contains an $\ded $-homothetic copy of $S$.
Sending it by the affine $\ded $-linear map $\Aff _{W,b}\colon B \to A $, we get an $\ded $-homothetic copy of $S$ in $A$.
This completes the proof of Theorem \ref{thm:homothetic}.
\end{proof}

\begin{remark}
    The above proof actually shows a finitary version of the theorem as in \cite[Theorem A]{KMMSY} because the dependence of the threshold for $\param $ on the set $A$ is via its density $\delta $ 
    (though of course the specific value of $\param $ should be determined depending on $A$ to ensure \eqref{eq:for-infinitely-many-d}).
\end{remark}

\begin{remark}\label{rem:positive-density-assumption}
    The assumption that $A$ has positive upper density in $\pele $ was used solely at
    \eqref{eq:for-infinitely-many-d}.
    It follows that we could have assumed 
    more directly
    that $A\subset \pele $ satisfies an inequality of the form:
    \begin{equation}\label{eq:axiomatic-approach}
        \mgn{A\cap
            \ideala _{\le M}
            } > \mathrm{const.}\cdot \frac{M^n}{\log M}
    \end{equation}
    for arbitrarily large $M$, with the positive constant depending only on $\ideala $ and $A$.
    See \cite[\S\S 8-9]{KMMSY} for a fully axiomatic treatment in the number field context.
    In fact, not surprisingly, this inequality is equivalent to $A$ having positive upper density in $\pele $; see \cite[Proposition 8.14]{KMMSY} for the arguments in the number field case, which is also valid here.
\end{remark}

\subsection{Non-normal case}\label{sec:non-normal-case}
It is routine to deduce Theorem \ref{thm:intro} from Theorem \ref{thm:density}.
Let $\Or $ be an integral domain finitely generated over $\Fq $ and of transcendence degree $1$.
Let $\Ded $ be its normalization.
By Theorem \ref{thm:homothetic} and Remark \ref{rem:positive-density-assumption} it suffices to show:
\begin{proposition}\label{prop:positive-density}
    The following inequality holds for infinitely many $M\in \bbN $:
    \begin{equation}\label{eq:enough-primes-of-Or}
        \mgn{\mcal P_{\Or } \cap
        \Ded _{\le M}
        }
        >
        \mathrm{const.}\cdot \frac{M^n}{\log M}
    \end{equation}
    with the positive constant depending only on $\Or $.
\end{proposition}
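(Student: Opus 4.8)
The plan is to reduce \eqref{eq:enough-primes-of-Or} to a \Cheb -type count of prime ideals of $\Ded$ lying in a suitable \emph{ray class}. Write $\mathfrak c:=\br{x\in\Ded\mid x\Ded\subseteq\Or}$ for the conductor of $\Or$ in $\Ded$; since $\Ded$ is a finite $\Or$-module, $\mathfrak c$ is a non-zero ideal of $\Ded$ contained in $\Or$, so in particular $1+\mathfrak c\subseteq\Or$. First I would recall the standard conductor-coprime ideal correspondence: $\mathfrak b\mapsto\mathfrak b\cap\Or$ is a bijection, preserving inclusions, norms and primality, between the ideals of $\Ded$ coprime to $\mathfrak c$ and the ideals of $\Or$ coprime to $\mathfrak c$. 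A short computation with this correspondence shows that whenever $\alpha\in\Ded$ satisfies $\alpha\equiv 1\pmod{\mathfrak c}$ and $\alpha\Ded$ is a non-zero prime ideal, one has $\alpha\in 1+\mathfrak c\subseteq\Or$ and $\alpha\Or=\alpha\Ded\cap\Or$ is a non-zero prime ideal of $\Or$; that is, $\alpha\in\mcal P_{\Or}$. Hence it suffices to produce $\gg M^n/\log M$ elements $\alpha\in\Ded_{\le M}$ with $\alpha\equiv 1\pmod{\mathfrak c}$ and $\alpha\Ded$ prime, for all large $M$.

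For the unit bookkeeping, set $\Ded^*_1:=\br{u\in\Ded^*\mid u\equiv 1\pmod{\mathfrak c}}$. Since $\Ded^*/\Ded^*_1$ embeds into the finite group $(\Ded/\mathfrak c)^*$, the subgroup $\Ded^*_1$ has finite index in $\Ded^*$, so $\mcal L(\Ded^*_1)$ is still a full-rank lattice in the hyperplane $H$ of \eqref{eq:def-of-H}. Repeating verbatim the proof of Proposition \ref{prop:norm-length-compatible-domain} with $\Ded^*_1$ in place of $\Ded^*$ then yields a norm-length compatible $\Ded^*_1$-fundamental domain $\domain_1\subseteq\Ded\setminus\br 0$: there is a constant $c_1>0$ with $\lnorm\alpha^n\le c_1\Nrm(\alpha)$ for every $\alpha\in\domain_1$.

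It remains to count prime ideals. Let $\mathrm{Cl}_{\mathfrak c}$ be the ray class group of $\Ded$ of modulus $\mathfrak c$ (fractional ideals coprime to $\mathfrak c$, modulo principal ideals $(\gamma)$ with $\gamma\equiv 1\pmod{\mathfrak c}$); it is finite, being an extension of $\Pic(\Ded)=\Pic(X)$ — itself finite because $X=\Spec\Ded$ is affine, so the degrees $\deg(v)$ ($v\in\complm$) generate a finite-index subgroup of $\bbZ$ while $\Pic^0(\kurve)$ is finite — by a quotient of $(\Ded/\mathfrak c)^*$. The ray class version of \Cheb 's density theorem follows from Weil's Riemann Hypothesis by the same argument that proves Theorem \ref{thm:Cheb}, applied now to the abelian cover of $\kurve$ ramified along $V(\mathfrak c)$ (cf.\ \cite[Ch.\ 9]{Rosen}; its number-field analogue is \cite[\S 8]{KMMSY}); exactly as in Corollary \ref{cor:conseq-of-Cheb} it provides a constant $c_{\Ded}>0$ with
\begin{equation*}
    \mgn{\br{ \idealp \in |\Spec \Ded | \mid \Nrm(\idealp)\le L,\ \idealp \text{ coprime to } \mathfrak c,\ [\idealp]=0 \text{ in } \mathrm{Cl}_{\mathfrak c} }}\ \ge\ c_{\Ded}\,\frac{L}{\log_q L}
\end{equation*}
for all large $L$. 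Given such an $\idealp$, the class condition means $\idealp$ has a generator congruent to $1$ modulo $\mathfrak c$; the generators of $\idealp$ of this form constitute a single $\Ded^*_1$-orbit, so exactly one of them, say $\alpha$, lies in $\domain_1$. Then $\alpha\equiv 1\pmod{\mathfrak c}$, $\alpha\Ded=\idealp$ is prime, $\alpha\in\mcal P_{\Or}$ by the first paragraph, and $\lnorm\alpha\le(c_1\Nrm(\idealp))^{1/n}\le(c_1L)^{1/n}$; taking $L:=M^n/c_1$ gives $\alpha\in\Ded_{\le M}$, and $\idealp\mapsto\alpha$ is injective since $\alpha\Ded=\idealp$. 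Therefore $\mgn{\mcal P_{\Or}\cap\Ded_{\le M}}\ge c_{\Ded}L/\log_q L\gg M^n/\log M$ for all large $M$, which proves \eqref{eq:enough-primes-of-Or}.

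The only genuinely non-formal input is the ray class version of \Cheb 's theorem; everything else is the conductor formalism together with the fundamental-domain device already used for $\Ded$ in \S\ref{sec:algebraic}, and I expect the main work in a full write-up to be the routine verifications of the $\mathfrak c$-coprime ideal correspondence between $\Or$ and $\Ded$ and of the finiteness of $\mathrm{Cl}_{\mathfrak c}$.
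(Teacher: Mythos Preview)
Your proposal is correct and follows essentially the same route as the paper: both arguments introduce the conductor $\mathfrak c$, pass to the finite-index subgroup $\Ded^*_1=\Ded^*_{\mathfrak c}$ of units congruent to $1$ modulo $\mathfrak c$, take a norm-length compatible $\Ded^*_1$-fundamental domain, and then invoke the ray-class (\emph{Picard-with-modulus}) version of \Cheb 's density theorem to count primes in the trivial class of $\mathrm{Cl}_{\mathfrak c}=\Pic(\Ded,\mathfrak c)$. The paper phrases the last step as the special case $\alpha_0=1$ of a slightly more general claim (any fixed residue $\alpha_0\in(\Ded/\mathfrak c)^*$), but otherwise the two arguments are the same.
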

We give only sketches. See also \cite[\S 10]{KMMSY} for a detailed account in the setting of number fields.
Let $\cond $ be the conductor:
\begin{equation}
    \cond := \br{ \alpha \in \Ded \mid \alpha \Ded \subset \Or } ,
\end{equation}
which is an ideal of $\Ded $ contained in $\Or $.
Let $\mathcal P_{\Ded }^{\cond }$ be a temporary notation for the set of prime elements of $\Ded $ coprime to $\cond $, which is $\mcal P_{\Ded }$ minus finitely many associate classes.

One shows that the elements of $\mcal P_{\Or }$ are precisely those elements of $\mcal P_{\Ded }^{\cond }$ which are contained in $\Or $.
More explicitly, we have the following cartesian diagram:
\begin{equation}\label{eq:cartesian}
    \xymatrix{
        \mathcal P_{\Or } \ar|{\subset }@{}[r]\ar[d]& \mathcal P _{\Ded }^{\cond } \ar[d]^{p}
        \\
        (\Or / \cond )^*\ar|{\subset }@{}[r]& (\Ded / \cond )^*
    }
.\end{equation}

Set $\Ded ^*_{\cond }:= \{ f\in \Ded ^* \mid f \mod \cond =1 \te{ in }\Ded /\cond  \} $.
It is a subgroup of $\Or ^*$ which is of finite index in $\Ded ^*$.
Let $\domain \subset \Ded \nonzero $ be a norm-length compatible $\Ded ^*_{\cond }$-fundamental domain
whose existence easily follows from Proposition \ref{prop:norm-length-compatible-domain}.
%
It suffices to show the inequality \eqref{eq:enough-primes-of-Or} with
$\mcal P_{\Or }\cap \domain $ in place of $\mcal P_{\Or }$.  
By the norm-length compatibility of $\domain $, 
we are reduced to showing the following inequality for infinitely many $L\in \bbN $:
\begin{equation}
    \mgn{
        \mcal P_{\Or }\cap \domain \cap \Ded (L)
    }
    > \mathrm{const.}\frac{L}{\log _q L}
,\end{equation}
with the constant depending only on $\Or $ and $\domain $.
Hence it suffices to show the $\alpha _0 =1$ case (say) of the following claim:
\begin{claim}\label{claim:positive-density-alpha-0}
    For each $\alpha _0\in (\Ded /\cond )^*$,
    write $\mcal P_{\Ded ,\alpha _0}^{\cond }:=p\inv (\alpha _0)$
    where $p$ is the vertical map
    in diagram \eqref{eq:cartesian}.
    Then the following inequality holds for all sufficiently large $L> 1$:
    \begin{equation}
        \mgn{
            \mcal P_{\Ded ,\alpha _0}^{\cond }\cap \domain \cap \Ded (L)
        }
        > \mathrm{const.}\frac{L}{\log _q L}
    ,\end{equation}
    with the positive constant depending only on $\cond $ and $\Ded $.
\end{claim}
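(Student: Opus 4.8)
The plan is to reduce the count to one of prime ideals lying in a fixed ray class, and then to apply a \Cheb -type density theorem; note that only the fundamental-domain property of $\domain $ is needed here, not its norm-length compatibility.

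\emph{Reduction to prime ideals.} Fix any lift $\tilde\alpha _0\in \Ded $ of $\alpha _0\in (\Ded /\cond )^*$; since $\alpha _0$ is a unit modulo every prime divisor of $\cond $, any such lift is automatically coprime to $\cond $. For $\gamma \in \mcal P_{\Ded ,\alpha _0}^{\cond }$ the ideal $\gamma \Ded $ is a prime ideal of $\Ded $ coprime to $\cond $, with $\Nrm (\gamma )=\Nrm (\gamma \Ded )$. Conversely, for a principal prime ideal $\idealp $ coprime to $\cond $ with a chosen generator $\gamma _0$, the set of all generators is $\gamma _0\Ded ^*$; its image in $(\Ded /\cond )^*$ is the coset $\bar\gamma _0\cdot \image (\Ded ^*\to (\Ded /\cond )^*)$, and the generators whose residue is exactly $\alpha _0$ --- if any exist --- form a single coset of $\Ded ^*_{\cond }=\ker (\Ded ^*\to (\Ded /\cond )^*)$, i.e.\ a single $\Ded ^*_{\cond }$-orbit in $\Ded \nonzero $. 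Since $\domain $ meets each such orbit exactly once, the number of $\gamma \in \mcal P_{\Ded ,\alpha _0}^{\cond }\cap \domain $ with $\gamma \Ded =\idealp $ equals $1$ if $\idealp $ admits a generator congruent to $\alpha _0$ modulo $\cond $, and $0$ otherwise; intersecting with $\Ded (L)$ imposes exactly $\Nrm (\idealp )\le L$.

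\emph{Translation into the ray class group.} Let $G:=\Pic (\Ded ,\cond )$ be the ray class group: the group of fractional ideals of $\Ded $ coprime to $\cond $ modulo the subgroup of those principal ideals $\gamma \Ded $ admitting a generator $\gamma $ congruent to $1$ modulo $\cond $. There is a surjection $G\surj \Pic (\Ded )$ whose kernel is a quotient of $(\Ded /\cond )^*$; since $\Pic (\Ded )$ and $(\Ded /\cond )^*$ are finite, so is $G$. The condition ``$\idealp $ is principal and has a generator congruent to $\alpha _0$ modulo $\cond $'' is precisely the condition that $\idealp \cdot (\tilde\alpha _0\Ded )^{-1}$ lies in that subgroup, i.e.\ that $[\idealp ]=P_{\alpha _0}$ in $G$, where $P_{\alpha _0}:=[\tilde\alpha _0\Ded ]$ is a single class depending only on $\alpha _0$. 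Combining this with the previous paragraph,
\begin{equation}
    \mgn{\mcal P_{\Ded ,\alpha _0}^{\cond }\cap \domain \cap \Ded (L)}
    =\mgn{\br{\idealp \in |\Spec (\Ded )| \mid \idealp \ \te{coprime to}\ \cond ,\ \Nrm (\idealp )\le L,\ [\idealp ]=P_{\alpha _0}\te{ in }G}} .
\end{equation}

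\emph{Density estimate.} It remains to bound this cardinality below by $\mathrm{const.}\cdot L/\log _q L$ for $L$ large. This is \Cheb 's density theorem for the finite ray class group $G$ attached to $\Frac (\Ded )$ and $\cond $: expressing the count through the $L$-functions of the (finitely many) characters of $G$ --- the trivial character giving $\zeta _\Ded $ up to finitely many local factors and the nontrivial ones giving entire $L$-functions whose zeros obey Weil's Riemann Hypothesis for the relevant curve --- one obtains, exactly as in the proof of Theorem \ref{thm:Cheb} and Corollary \ref{cor:conseq-of-Cheb}, a main term of order $L/\log _q L$ with an error term of the same quality as there; cf.\ \cite{Rosen} for the function-field Dirichlet theorem and the model case $\Ded =\Fq [t]$. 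Since $P_{\alpha _0}$ is the class of an honest ideal coprime to $\cond $, there is no obstruction to its being represented by primes, and the finitely many primes dividing $\cond $ contribute nothing. This yields the asserted lower bound with a constant depending only on $\Ded $ and $\cond $, and completes the proof of Claim \ref{claim:positive-density-alpha-0}.

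The only step needing genuine attention --- and the main obstacle --- is this last one: one must check that the density theorem of Theorem \ref{thm:Cheb} does carry over to the ray class group $G$ with an error term small enough that the main term $\asymp L/\log _q L$ survives. Everything preceding it is elementary bookkeeping with the fundamental domain $\domain $ and the exact sequence defining $G$.
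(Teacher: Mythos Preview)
Your proof is correct and follows essentially the same route as the paper: both reduce the count via the $\Ded ^*_{\cond }$-fundamental-domain bijection to prime ideals in a fixed class of the ray class group $\Pic (\Ded ,\cond )$, and then invoke the \Cheb\ density theorem in its ray-class-group version (the paper cites the same results in \cite{Rosen} you have in mind). Your remark that only the fundamental-domain property of $\domain $ is used here, not norm-length compatibility, is accurate and worth noting.
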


Recall the definition of the Picard group $\Pic (\Ded ,\cond )$ with modulus $\cond $ as a quotient of a free abelian group:
\begin{equation}
    \Pic (\Ded ,\cond )=\frac{\bbZ [ |\spDed |\setminus \Spec (\Ded /\cond )]}{\br{(\alpha )\mid \alpha \in \Fq (\spDed )^*_{\cond }  } }
,\end{equation}
where $\Fq (\spDed )^*_{\cond }$ is the subgroup of $\Fq (\spDed )^*$ consisting of $\alpha $ which are regular around $\Spec (\Ded /\cond )$ and are equal to $1$ in $\Ded /\cond $.
Given an element $\alpha _0\in (\Ded /\cond )^*$, consider a lift $\wti \alpha _0 \in \Ded $ and its divisor $(\wti \alpha _0)$.
Its class in $\Pic (\Ded ,\cond )$ does not depend on the choice of $\wti \alpha _0$ so we get a well-defined class $[(\alpha _0)]\in \Pic (\Ded ,\cond )$.
Consider the following set of prime ideals:
\begin{align}
    \Spec (\Ded )_{\alpha _0}
    :=
    &\br{ \idealp \in \Spec (\Ded ) \mid \idealp = \wti\alpha _0\Ded \te{ for some lift }\wti \alpha _0 \in \Ded \te{ of }\alpha _0 }
    \\
    =&\br{ \idealp \in \Spec (\Ded ) \mid [\idealp ]= [(\alpha _0)] \te{ in }\Pic (\Ded ,\cond ) }
.\end{align}
The obvious map $\mcal P_{\Ded ,\alpha _0}^{\cond }\cap \domain \to \Spec (\Ded )_{\alpha _0}$; $\alpha \mapsto \alpha \Ded $ is a bijection.

The \Cheb\ Density Theorem \ref{thm:Cheb} holds with $\Pic (\kurve )$ replaced by $\Pic (\kurve ,\cond )$
with the same proof because the result \cite[Theorem 9.24, p.141]{Rosen} we cited is stated in this generality.
Thus for every finite quotient $G$ of $\Pic (\kurve ,\cond )$, an element $P\in G$ and $n>0$, we have:
\begin{multline}
        \mgn{ \Bigl\{ x\in \kurve \setminus \Spec (\Ded /\cond ) \mid \deg (x)=n\te{ and }[x]=P \te{ in }G \Bigr\} }
        \\
        =
        \begin{cases}
            \displaystyle \frac{D}{\mgn G} \frac{q^n}n + O_{\kurve }\paren{\frac{q^{n/2}}n }
            & \te{ if } n\equiv \deg (P) \te{ in }\bbZ / D\bbZ ,
            \\[5mm]
            \displaystyle  O_{\kurve} \paren{\frac{q^{n/2}}n } & \te{ else. }
        \end{cases}
    \end{multline}
We apply this to $G=\Pic (\Ded ,\cond )$ and its element $[(\alpha _0)]$. 
It follows for $n\equiv \deg (\alpha _0) $ in $\bbZ /D\bbZ $, we have
\begin{equation}\label{eq:growth-Ded-alpha-0}
    \mgn{ \br{\alpha \in \mcal P_{\Ded ,\alpha _0}^{\cond }\cap \domain \ \middle|\ \Nrm (\alpha )=q^n} }
    = \frac{D}{\mgn{ \Pic (\Ded ,\cond ) } } \frac{q^n}n + O_{\kurve} \paren{\frac{q^{n/2}}n }
.\end{equation}
This 
proves Claim \ref{claim:positive-density-alpha-0}
and hence Proposition \ref{prop:positive-density}.


\subsection*{Acknowledgements}
I have learned much of the technique used here through collaboration \cite{KMMSY} with Masato Mimura, Akihiro Munemasa, Shin-ichiro Seki and Kiyoto Yoshino.
Especially I owe much to Shin-ichiro, who was crazy enough to give us a 100-hour lecture series and teach us everything about the classical Green-Tao theorem. 
I thank Federico Binda for motivating conversations over lunch.
Most of this work was done in the latter half of 2020.
Amid all the irregularities caused by the COVID-19 pandemic, the Tohoku University staff has been so great
that I was able to finish this work more quickly than I intended.
During the work I was partially supported by JSPS KAKENHI Grant Number JP18K13382.

\end{document}